\documentclass[a4paper,10pt,reqno]{amsart}
\usepackage{amsmath,amsfonts,amsthm,amssymb,color}
\usepackage[T1]{fontenc}
\usepackage{pdfsync}
\usepackage{csquotes}
\usepackage{graphicx}
\usepackage{pstricks}
\usepackage{lmodern}

\usepackage{tikz}



  \topmargin -0.4in  \headsep 0.4in  \textheight 9.0in
   \oddsidemargin 0.02in  \evensidemargin 0.15in  
\textwidth 6.3in

\newcommand{\be}{\beta}

\newcommand{\1}{{\bf 1}}

\newcommand{\rti}{\tilde{r}}
\newcommand{\uti}{\tilde{u}}

\newcommand{\etati}{\tilde{\eta}}
\newcommand{\lati}{\tilde{\la}}
\newcommand{\betati}{\tilde{\beta}}
\newcommand{\fouri}{\mathcal{F}}
\newcommand{\yti}{\tilde{y}}
\newcommand{\zti}{\tilde{z}}
\newcommand{\Hti}{\tilde{H}}

\newcommand{\etatiti}{\tilde{\tilde{\eta}}}


\newcommand{\R}{\mathbb R}


\newcommand{\cd}{\mathcal D}
\newcommand{\ce}{\mathcal E}
\newcommand{\cf}{\mathcal F}

\newcommand{\cj}{\mathcal J}

\newcommand{\cn}{\mathcal N}

\newcommand{\cs}{\mathcal S}
\newcommand{\cw}{\mathcal W}

\newcommand{\al}{\alpha}

\newcommand{\ga}{\gamma}
\newcommand{\gga}{\Gamma}

\newcommand{\la}{\lambda}

\newcommand{\si}{\sigma}


\newtheorem{theorem}{Theorem}[section]

\newtheorem{definition}[theorem]{Definition}

\newtheorem{lemma}[theorem]{Lemma}

\newtheorem{proposition}[theorem]{Proposition}

\theoremstyle{remark}
\newtheorem{remark}[theorem]{Remark}


\pgfdeclareshape{crosscircle}
{
  \inheritsavedanchors[from=circle] 
  \inheritanchorborder[from=circle]
  \inheritanchor[from=circle]{north}
  \inheritanchor[from=circle]{north west}
  \inheritanchor[from=circle]{north east}
  \inheritanchor[from=circle]{center}
  \inheritanchor[from=circle]{west}
  \inheritanchor[from=circle]{east}
  \inheritanchor[from=circle]{mid}
  \inheritanchor[from=circle]{mid west}
  \inheritanchor[from=circle]{mid east}
  \inheritanchor[from=circle]{base}
  \inheritanchor[from=circle]{base west}
  \inheritanchor[from=circle]{base east}
  \inheritanchor[from=circle]{south}
  \inheritanchor[from=circle]{south west}
  \inheritanchor[from=circle]{south east}
  \inheritbackgroundpath[from=circle]
  \foregroundpath{
    \centerpoint%
    \pgf@xc=\pgf@x%
    \pgf@yc=\pgf@y%
    \pgfutil@tempdima=\radius%
    \pgfmathsetlength{\pgf@xb}{\pgfkeysvalueof{/pgf/outer xsep}}%
    \pgfmathsetlength{\pgf@yb}{\pgfkeysvalueof{/pgf/outer ysep}}%
    \ifdim\pgf@xb<\pgf@yb%
      \advance\pgfutil@tempdima by-\pgf@yb%
    \else%
      \advance\pgfutil@tempdima by-\pgf@xb%
    \fi%
    \pgfpathmoveto{\pgfpointadd{\pgfqpoint{\pgf@xc}{\pgf@yc}}{\pgfqpoint{-0.707107\pgfutil@tempdima}{-0.707107\pgfutil@tempdima}}}
    \pgfpathlineto{\pgfpointadd{\pgfqpoint{\pgf@xc}{\pgf@yc}}{\pgfqpoint{0.707107\pgfutil@tempdima}{0.707107\pgfutil@tempdima}}}
    \pgfpathmoveto{\pgfpointadd{\pgfqpoint{\pgf@xc}{\pgf@yc}}{\pgfqpoint{-0.707107\pgfutil@tempdima}{0.707107\pgfutil@tempdima}}}
    \pgfpathlineto{\pgfpointadd{\pgfqpoint{\pgf@xc}{\pgf@yc}}{\pgfqpoint{0.707107\pgfutil@tempdima}{-0.707107\pgfutil@tempdima}}}
  }
}
\makeatother

\colorlet{symbols}{blue!90!black}
\colorlet{testcolor}{green!60!black}


\def\symbol#1{\textcolor{symbols}{#1}}
\def\1{\mathbf{\symbol{1}}}

\usetikzlibrary{shapes.misc}
\usetikzlibrary{shapes.symbols}
\usetikzlibrary{snakes}
\usetikzlibrary{decorations}
\usetikzlibrary{decorations.markings}

\def\drawx{\draw[-,solid] (-3pt,-3pt) -- (3pt,3pt);\draw[-,solid] (-3pt,3pt) -- (3pt,-3pt);}
\tikzset{
	root/.style={circle,fill=testcolor,inner sep=0pt, minimum size=2mm},
	dot/.style={circle,fill=black,inner sep=0pt, minimum size=1mm},
	var/.style={circle,fill=black!10,draw=black,inner sep=0pt, minimum size=2mm},
	dotred/.style={circle,fill=black!50,inner sep=0pt, minimum size=2mm},
	generic/.style={semithick,shorten >=1pt,shorten <=1pt},
	dist/.style={ultra thick,draw=testcolor,shorten >=1pt,shorten <=1pt},
	testfcn/.style={ultra thick,testcolor,shorten >=1pt,shorten <=1pt,<-},
	testfcnx/.style={ultra thick,testcolor,shorten >=1pt,shorten <=1pt,<-,
		postaction={decorate,decoration={markings,mark=at position 0.6 with {\drawx}}}},
	kprime/.style={semithick,shorten >=1pt,shorten <=1pt,densely dashed,->},
	kprimex/.style={semithick,shorten >=1pt,shorten <=1pt,densely dashed,->,
		postaction={decorate,decoration={markings,mark=at position 0.4 with {\drawx}}}},
	kernel/.style={semithick,shorten >=1pt,shorten <=1pt,->},
	multx/.style={shorten >=1pt,shorten <=1pt,
		postaction={decorate,decoration={markings,mark=at position 0.5 with {\drawx}}}},
	kernelx/.style={semithick,shorten >=1pt,shorten <=1pt,->,
		postaction={decorate,decoration={markings,mark=at position 0.4 with {\drawx}}}},
	kernel1/.style={->,semithick,shorten >=1pt,shorten <=1pt,postaction={decorate,decoration={markings,mark=at position 0.45 with {\draw[-] (0,-0.1) -- (0,0.1);}}}},
	kernel2/.style={->,semithick,shorten >=1pt,shorten <=1pt,postaction={decorate,decoration={markings,mark=at position 0.45 with {\draw[-] (0.05,-0.1) -- (0.05,0.1);\draw[-] (-0.05,-0.1) -- (-0.05,0.1);}}}},
	kernelBig/.style={semithick,shorten >=1pt,shorten <=1pt,decorate, decoration={zigzag,amplitude=1.5pt,segment length = 3pt,pre length=2pt,post length=2pt}},
	rho/.style={dotted,semithick,shorten >=1pt,shorten <=1pt},
	renorm/.style={shape=circle,fill=white,inner sep=1pt},
	labl/.style={shape=rectangle,fill=white,inner sep=1pt},
	xi/.style={circle,fill=symbols!10,draw=symbols,inner sep=0pt,minimum size=1.2mm},
	xix/.style={crosscircle,fill=symbols!10,draw=symbols,inner sep=0pt,minimum size=1.2mm},
	xib/.style={circle,fill=symbols!10,draw=symbols,inner sep=0pt,minimum size=1.6mm},
	xibx/.style={crosscircle,fill=symbols!10,draw=symbols,inner sep=0pt,minimum size=1.6mm},
	not/.style={circle,fill=symbols,draw=symbols,inner sep=0pt,minimum size=0.5mm},
	>=stealth,
	}
\makeatletter
\def\DeclareSymbol#1#2#3{\expandafter\gdef\csname MH@symb@#1\endcsname{\tikz[baseline=#2,scale=0.15,draw=symbols]{#3}}\expandafter\gdef\csname MH@symb@#1s\endcsname{\scalebox{0.7}{\tikz[baseline=#2,scale=0.15,draw=symbols]{#3}}}}
\def\<#1>{\csname MH@symb@#1\endcsname}
\makeatother


\DeclareSymbol{circle}{0.5}{\draw (0,0.7) node[xi] {};}
\DeclareSymbol{line}{0.5}{\draw (0,0.2) node[not] {} -- (0,1.4) node[not] {};}

\DeclareSymbol{Psi}{0.5}{\draw (0,0) node[not] {} -- (0,1.5) node[xi] {};}
\DeclareSymbol{Psi2}{0.5}{\draw (-0.8,1) node[xi] {} -- (0,0) node[not] {} -- (0.8,1) node[xi] {};}
\DeclareSymbol{IPsi2}{0}{\draw (0,1) -- (0.8,2.2) node[xi] {};\draw (0,-0.25) node[not] {} -- (0,1) node[not] {} -- (-0.8,2.2) node[xi] {};}
\DeclareSymbol{PsiIPsi2}{0}{\draw (0,1) -- (0.8,2.2) node[xi] {};\draw (0,-0.25) node[not] {} -- (0,1) node[not] {} -- (-0.8,2.2) node[xi] {};\draw (0,-0.25) node[not] {} -- (0.8,1) node[xi] {};}



\date{\today}

\title{On a non-linear 2D fractional wave equation}

\begin{document}

\

\begin{center}
{\large\textbf{
On a non-linear 2D fractional wave equation
}}\\~\\
Aur\'elien Deya\footnote{Institut \'Elie Cartan, Universit\' e de Lorraine, BP 70239, 54506 Vandoeuvre-l\`es-Nancy, France. \\Email: {\tt aurelien.deya@univ-lorraine.fr}}
\end{center}

\bigskip

{\small \noindent {\bf Abstract:} We pursue the investigations initiated in \cite{deya-wave} about a wave-equation model with quadratic perturbation and stochastic forcing given by a space-time fractional noise. We focus here on the two-dimensional situation and therein extend the results of \cite{deya-wave} to a rougher noise, through the use of a third-order expansion. We also point out the limits of the Wick-renormalisation procedure in this case.

\bigskip

\noindent {\bf Keywords}: Stochastic wave equation; Fractional noise; Wick renormalization.

\bigskip

\noindent
{\bf 2000 Mathematics Subject Classification:} 60H15, 60G22, 35L71.}

\small

\section{Introduction and main results}

This paper can be seen as the continuation of the analysis carried out in \cite{deya-wave}. Just as in the latter reference, we consider the following non-linear stochastic wave model: 
\begin{equation}\label{dim-d-quadratic-wave}
\left\{
\begin{array}{l}
\partial^2_t u-\Delta u= u^2+\dot{B} \quad , \quad  t\in [0,T] \ , \ x\in \R^d \ ,\\
u(0,.)=\phi_0 \quad , \quad \partial_t u(0,.)=\phi_1 
\end{array}
\right.
\end{equation}
where $\phi_0,\phi_1$ are (deterministic) initial conditions in an appropriate Sobolev space and $\dot{B}\triangleq\partial_t\partial_{x_1}\dots  \partial_{x_{d}} B$, for some space-time fractional Brownian noise $B$. To be more specific, we will here focus on the two-dimension case (i.e., $d=2$ in (\ref{dim-d-quadratic-wave})) and in this situation, we wish to extend the considerations and results of \cite{deya-wave} to a rougher noise. For a clear expression of this roughness property, as well as a more explicit stochastic analysis, we have decided, just as in \cite{deya-wave}, to restrict our attention to a fractional Brownian noise (the most natural extension of the space-time white noise):    

\begin{definition}
Let $(\Omega,\mathcal{F},\mathbb{P})$ be a complete filtered probability space. For $H=(H_0,H_1,H_2) \in (0,1)^{3}$, we call a space-time fractional Brownian motion (or a fractional Brownian sheet) of Hurst index $H$ any centered Gaussian process $B:\Omega \times ([0,T]\times \R^2) \to \R$ whose covariance function is given by 
$$\mathbb{E} \big[ B_s(x_1,x_2)B_t(y_1,y_2)\big] = R_{H_0}(s,t) R_{H_1}(x_1,y_1)R_{H_2}(x_2,y_2) \ ,$$
where
$$R_{H_i}(x,y)\triangleq\frac12 (|x|^{2H}+|y|^{2H}-|x-y|^{2H}) \ .$$
\end{definition}

With this notation in hand, let us recall that the results of \cite{deya-wave} for the model (\ref{dim-d-quadratic-wave}) (with $d=2$) rely on some second-order strategy and only cover the case where
\begin{equation}\label{old-conditions}
 H_0+H_1+H_2> \frac54 \ .
\end{equation}
Our aim here is to go one step further and treat the situation where 
\begin{equation}\label{extension-h}
1< H_0+H_1+H_2\leq \frac54 \ ,
\end{equation}
a condition which, in some sense, will turn out to be optimal (see Proposition \ref{prop:limit-case} below). Beyond the extension result itself, the study will give us the opportunity to settle a third-order procedure, which somehow generalizes the so-called Da Prato-Debussche trick and involves the construction of sophisticated third-order stochastic objects. As far as we know, such a third-order expansion, inspired by the recent developments in the parabolic setting (see e.g. \cite[Section 1.1]{mourrat-weber}), is new in the stochastic wave literature (including the \enquote{white-noise} wave literature). 

\

In order to illustrate our strategy, and also to understand the whole difficulty raised by the transition from (\ref{old-conditions}) to (\ref{extension-h}), let us start with a few heuristic considerations on the quadratic fractional model (\ref{dim-d-quadratic-wave}).

\subsection{Heuristic considerations}\label{subsec:heuri}
For the sake of clarity, the following shortcut notations will be used throughout the paper: for all domain $D\subset \R^2$, all $T>0$, $\al\in \R$ and $p,q\geq 1$, we set  
\begin{equation}
L^q_T \cw^{\al,p} \triangleq L^p([0,T];\cw^{\al,p}(\R^2)) \quad , \quad L^q_T \cw^{\al,p}_D \triangleq L^p([0,T];\cw^{\al,p}(D)) \ ,
\end{equation}
where $\cw^{\al,p}(\R^2)$, resp. $\cw^{\al,p}(D)$, stands for the classical Sobolev space, resp. localized Sobolev space. 

\smallskip

Let us here fix a compact domain $D\subset \R^2$. At a formal level, the mild form of Equation (\ref{dim-d-quadratic-wave}) (with $d=2$) is given by
\begin{equation}\label{equa-u-heuri}
u=\partial_t (G_t\ast \phi_0)+G_t\ast \phi_1+G \ast u^2+\<Psi> \ ,
\end{equation}
where $G$ stands for the wave kernel in $\R^2$, characterized by its Fourier transform 
$$\fouri_x (G_t )(\xi)=\frac{\sin(t|\xi|)}{|\xi|} \ , \quad t\geq 0 \ , \ \xi\in \R^2 \ ,$$
and the symbol $\<Psi>$ refers to the associated \enquote{homogeneous} solution, that is the solution of
\begin{equation}\label{equation-psi}
\left\{
\begin{array}{l}
\partial^2_t \<Psi> -\Delta \<Psi> =\dot{B} \quad , \quad  t\in [0,T] \ , \ x\in \R^2 \ ,\\ 
\<Psi> (0,.)=0\quad , \quad \partial_t \<Psi> (0,.)=0 \ .
\end{array}
\right. 
\end{equation}
Of course, due to the roughness of the noise $\dot{B}$, the interpretation of the latter equation is not exactly a standard issue: in fact, for a proper definition of $\<Psi>$, we will rely in the sequel on a specific approximation procedure (see the convergence statement for the first component in Proposition \ref{prop:stoch-constr}). As a result of this procedure, $\<Psi>$ will (almost surely) appear as a stochastic process with values in the space $L^\infty_T\cw^{-\al,p}_D$, for all $p\geq 2$ and $\al >\frac32-(H_0+H_1+H_2)$. With condition (\ref{extension-h}) in mind, let us assume from now on that $\al>0$.

\smallskip

Going back to (\ref{equa-u-heuri}), a first natural idea toward a possible fixed-point argument (often referred to as the Da Prato-Debussche trick) is to consider the dynamics satisfied by the difference process $v \triangleq u-\<Psi>$. To this end, we can rewrite the equation as
\begin{equation}\label{equa-v-heuri-1}
v=\partial_t (G_t\ast \phi_0)+G_t\ast \phi_1+G \ast v^2+2\, G \ast (v\cdot  \<Psi>)+G \ast (\<Psi>)^2 \ ,
\end{equation}
and then try to identify some possible stable space for $v$. Observe however that we must here face with a new interpretation issue. Indeed, as $\<Psi>$ is expected to take values in some negative-order Sobolev space, it is not clear to know a priori how we must interpret the product $(\<Psi>)^2$ in (\ref{equa-v-heuri-1}). This problem has been treated in \cite{gubinelli-koch-oh} for the white-noise situation $H_0=H_1=H_2=\frac12$, and then in \cite{deya-wave} under the more general condition $H_0+H_1+H_2>\frac54$. In both of these references, it is shown that $(\<Psi>)^2$ can only be understood in some Wick sense, that is through a renormalization procedure and the use of stochastic estimates, and the construction gives birth to a process with values in $L^\infty_T\cw^{-2\al,p}_D$ (a.s.). A first new result of the present study will consist in extending this property under condition (\ref{extension-h}) (see the convergence result for the second component in Proposition \ref{prop:stoch-constr}). Let us henceforth denote by 
$$\<Psi2>\in L^\infty_T\cw^{-2\al,p}_D$$
the resulting Wick interpretation of the product $(\<Psi>)^2$, and consider the related equation
\begin{equation}\label{equa-v}
v=\partial_t (G_t\ast \phi_0)+G_t\ast \phi_1+G \ast v^2+2\, G \ast (v\cdot  \<Psi>)+G \ast \<Psi2> \ .
\end{equation}
At this point, recall that, even if the property is less convenient to handle than its parabolic counterpart, convolution with the wave kernel $G$ still yields some regularizing effect, as summed up through the so-called Strichartz inequalities (see e.g. \cite[Propositions 3.2 and 3.3]{deya-wave}). According to these results and using the symbol $\<IPsi2>$ for the convolution $G \ast \<Psi2>$, we can (at least morally) expect to have 
$$ \<IPsi2>\in L^\infty_T\cw^{1-2\al,2}_D\ .$$
Actually, just as with $\<Psi>$ and $\<Psi2>$, the use of stochastic arguments will allow us to improve this regularity property and derive (see the third component in Proposition \ref{prop:stoch-constr}) that
\begin{equation}\label{regularity-ipsi2}
\<IPsi2> \in L^\infty_T\cw^{1-2\al,p}_D \quad \text{for all} \ p\geq 2 \ .
\end{equation}
Going back to equation (\ref{equa-v}), we can then expect the solution $v$ to inherit the regularity of $\<IPsi2>$, so that, if we refer to standard distribution-theory results (such as those in the subsequent Proposition \ref{prop:product}), the product $v\cdot \<Psi>$ involved in the equation is likely to make sense as long as $(1-2\al)+(-\al)>0$, that is as long as $\al\in (0,\frac13)$. These heuristic arguments somehow account for the success of the \enquote{first-order} expansion used in \cite{deya-wave,gubinelli-koch-oh} (in both references, it is assumed that $\al\in (0,\frac14)$).

\smallskip

In order to go one step further and handle the situation where $\al \in (\frac13,\frac12)$ (which encompasses condition (\ref{extension-h})), let us iterate the above principles and consider the new process $w\triangleq v-\<IPsi2>$. Equation (\ref{equa-v}) now turns into
\begin{eqnarray}
w&=&\partial_t (G_t\ast \phi_0)+G_t\ast \phi_1+ G \ast \big( w+\<IPsi2>\big)^2+2\, G \ast \big( \big(w+\<IPsi2>\big) \<Psi>\big)\nonumber \\
&=&\partial_t (G_t\ast \phi_0)+G_t\ast \phi_1+G\ast w^2+G \ast \big(\<IPsi2>\big)^2+2\, G\ast \big(  w \cdot \<IPsi2> \big)+2\, G \ast \big( w \cdot \<Psi>\big) +2\, G \ast \<PsiIPsi2> \ ,\label{equa-w}
\end{eqnarray}
where the \enquote{third-order} process $\<PsiIPsi2> $ is defined as  
$$\<PsiIPsi2>  \triangleq \<IPsi2> \cdot \<Psi>\ .$$
Again, the interpretation of the latter product is not clear when $\al\in (\frac13,\frac12)$ (since $(1-2\al)+(-\al)<0$), but, just as with $\<Psi2>$, we can hope for a \enquote{stochastic construction} of this product, leading (a.s.) to a well-defined element
$$\<PsiIPsi2> \in  L^\infty_T\cw^{-\al,p}_D \ .$$
The consideration of this process will be one of the main novelty of our study. Its construction above the fractional noise corresponds to the convergence result for the fourth component in Proposition \ref{prop:stoch-constr}. Once endowed with $\<PsiIPsi2>$, and along the same pattern as above, we can (morally) expect to have
$$w \in L^\infty_T\cw^{1-\al,2}_D \ ,$$
so that the product $w\cdot  \<Psi>$ in (\ref{equa-w}) can make sense for any $\al\in (\frac13,\frac12)$ (due to $(1-\al)+(-\al)>0$), leading finally to the possibility of a well-posed equation in our setting. The details of this deterministic procedure will be the topic of Section \ref{subsec:solv-equa} below. Let us again emphasize the fact that, to the best of our knowledge, the use of such a third-order strategy cannot be found in the existing stochastic wave literature.

\smallskip

To end with these heuristic considerations, we need to specify that, just as in \cite{deya-wave}, we only aim at a local solution in this study, in time {\it as well as in space} (say on the fixed space domain $\{|x|\leq 1\}$). Let us thus introduce, for the rest of the paper, a smooth function $\rho: \R^2 \to \R$ with support included in $\{|x|\leq 2\}$ such that $\rho(x)=1$ for $|x|\leq 1$, and note that we will rather consider the following \enquote{localized} version of (\ref{equa-w}): 
\begin{align*}
&w=\partial_t (G_t\ast \phi_0)+G_t\ast \phi_1\\
&\hspace{0.5cm}+ G\ast \big( \rho^2w^2\big)+G \ast \big( \rho^2\big(\<IPsi2>\big)^2\big)+2\, G\ast \big( (\rho w) \cdot \big( \rho \<IPsi2>\big) \big)+2\, G \ast \big( (\rho w) \cdot \big( \rho \<Psi>\big) \big)+2\, G \ast \big( \rho^2\<PsiIPsi2>\big) \ .
\end{align*}

\begin{remark}
The use of the above symbols $\<Psi>$, $\<Psi2>$, $\<IPsi2>$ and $\<PsiIPsi2>$ is directly inspired by the recent literature on parabolic SPDEs (see e.g. \cite[Section 4]{hairer-pardoux} or \cite[Section 1]{mourrat-weber}). In fact, as the reader may have guessed it, the construction of these symbols (morally) obeys the following simple rules: $(i)$ the circle symbol $\<circle>$ refers to the noise $\dot{B}$; $(ii)$ each line $\<line>$ corresponds to a convolution with the wave kernel $G$; $(iii)$ two (sub-)symbols attached to a same black knot are just multiplied between each other (that is, the associated processes are multiplied between each other). Note however that the latter \enquote{code} is only heuristic: in particular, it does not take the possible renormalization procedures into account (as the one involved in the \enquote{Wick} definition of $\<Psi2>$).
\end{remark}

\subsection{Stochastic constructions}\label{subsec:stoch-results}

As evoked earlier, a proper definition of the stochastic components 
$$(\<Psi>,\<Psi2>,\<IPsi2>,\<PsiIPsi2>)$$
at the center of the above analysis will be obtained through an approximation procedure. Namely, starting from some smooth approximation $\dot{B}^n$ of the fractional noise, we first observe that, for each fixed $n\geq 1$, the homogeneous equation
\begin{equation}\label{equation-psi-n}
\left\{
\begin{array}{l}
\partial^2_t \<Psi>^n -\Delta \<Psi>^n =\dot{B}^n \quad , \quad  t\in [0,T] \ , \ x\in \R^2 \ ,\\ 
\<Psi>^n (0,.)=0\quad , \quad \partial_t \<Psi>^n (0,.)=0 \ 
\end{array}
\right.
\end{equation}
falls within the class of standard hyperbolic systems, for which a unique (global) solution $\<Psi>^n$ is known to exist. Then we define the (smooth) approximated path $\mathbf{\Psi}^n\triangleq \big(\<Psi>^n,\<Psi2>^n,\<IPsi2>^n,\<PsiIPsi2>^n\big)$ along the explicit formulas
\begin{equation}\label{approx-rp}
\<Psi2>^n(t,y)\triangleq (\<Psi>^n(t,y))^2-\mathbb{E}\big[ (\<Psi>^n(t,y))^2\big] \quad , \quad \<IPsi2>^n \triangleq G\ast \<Psi2>^n \quad , \quad \<PsiIPsi2>^n \triangleq \<IPsi2>^n \<Psi>^n \ ,
\end{equation}
and study the (almost sure) convergence of these processes in suitable spaces.

\smallskip

In order to implement this standard procedure, we will consider here the approximation derived from the so-called harmonizable representation of the space-time fractional Brownian motion, that is the formula (valid for every $H=(H_0,H_1,H_2)\in (0,1)^{3}$)  
\begin{equation*}
B_t(x_1,x_2)= c_{H} \int_{\xi\in \R}\int_{\eta \in \R^2} \widehat{W}(d\xi,d\eta) \frac{e^{\imath t\xi}-1}{|\xi|^{H_0+\frac12}} \frac{e^{\imath x_1 \eta_1 }-1}{|\eta_1|^{H_1+\frac12}}\frac{e^{\imath x_2 \eta_2 }-1}{|\eta_2|^{H_2+\frac12}} \ ,
\end{equation*}
where $c_H >0$ is a suitable constant and $\widehat{W}$ stands for the Fourier transform of a space-time white noise in $\R^{3}$, defined on some complete filtered probability space $(\Omega,\mathcal{F},\mathbb{P})$. The approximation $(B^n)_{n\geq 0}$ of $B$ is now defined as $B^0 \equiv 0$ and, for $n\geq 1$,
\begin{equation}
B^n_t(x_1,x_2)\triangleq c_{H} \int_{|\xi|\leq 2^n}\int_{|\eta|\leq 2^n} \widehat{W}(d\xi,d\eta) \frac{e^{\imath t\xi}-1}{|\xi|^{H_0+\frac12}} \frac{e^{\imath x_1 \eta_1 }-1}{|\eta_1|^{H_1+\frac12}}\frac{e^{\imath x_2 \eta_2 }-1}{|\eta_2|^{H_2+\frac12}} \ .
\end{equation}
It is readily checked that for all fixed $H=(H_0,H_1,H_2)\in (0,1)^{3}$ and $n\geq 1$, the process $B^n$ indeed corresponds  (almost surely) to a smooth function with respect to all its parameters, and we can thus consider the process $\mathbf{\Psi}^n$ associated with its derivative $\dot{B}^n$ (along (\ref{equation-psi-n})-(\ref{approx-rp})) .

\smallskip
 
With this notation in mind, our main stochastic result can be stated as follows:

\begin{proposition}\label{prop:stoch-constr}
Let $(H_0,H_1,H_2)\in (0,1)^{3}$ be such that 
\begin{equation}\label{constraint-h-i}
0<H_1<\frac34 \quad , \quad 0<H_2< \frac34 \quad , \quad 1 < H_0+H_1+H_2 \leq  \frac54 \ ,
\end{equation}
Then, for all compact domain $D\subset \R^2$, all $T>0$, $p\geq 2$ and 
\begin{equation}\label{regu-2}
\al >\frac32- (H_0+H_1+H_2)\ ,
\end{equation}
the sequence $(\mathbf{\Psi}^n)_{n\geq 1} \triangleq (\<Psi>^n,\<Psi2>^n,\<IPsi2>^n,\<PsiIPsi2>^n)_{n\geq 1}$ defined by (\ref{equation-psi-n})-(\ref{approx-rp}) converges to a limit in the space $L^p(\Omega;\mathcal{E}^{\al,p}_{T,D})$, where
\begin{equation}\label{defi:e-al-p}
\mathcal{E}^{\al,p}_{T,D} \triangleq L^\infty_T\mathcal{W}^{-\al,p}_D\times L^\infty_T\mathcal{W}^{-2\al,p}_D \times L^\infty_T\mathcal{W}^{1-2\al,p}_D\times L^\infty_T\mathcal{W}^{-\al,p}_D \ .
\end{equation}
We denote this limit by $\mathbf{\Psi}=(\<Psi>,\<Psi2>,\<IPsi2>,\<PsiIPsi2>)$.
\end{proposition}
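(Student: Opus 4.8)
The plan is to follow the by-now standard route for constructing such objects above a Gaussian noise: reduce the almost sure convergence in $L^p(\Omega;\mathcal{E}^{\al,p}_{T,D})$ to second-moment (covariance) estimates, and establish the latter through explicit frequency-space power counting based on the harmonizable representation of $B$. The key structural observation is that, for each fixed $n$, the four approximated processes live in fixed Wiener chaoses: $\<Psi>^n$ in the first chaos, $\<Psi2>^n$ and $\<IPsi2>^n=G\ast \<Psi2>^n$ in the second one, and $\<PsiIPsi2>^n=\<IPsi2>^n\<Psi>^n$ in the direct sum of the third and the first chaoses. By Nelson's hypercontractivity (equivalence of all $L^p(\Omega)$-moments on a fixed chaos), every $L^p(\Omega)$-bound below then reduces to an $L^2(\Omega)$-bound; and, since $D$ is bounded, combining this with a Besov embedding of the form $\mathcal{B}^{s+\ep}_{p,\infty}(D)\hookrightarrow \mathcal{W}^{s,p}_D$ (for any $\ep>0$ and $p\geq 2$) and with Kolmogorov's continuity criterion in the time variable, it suffices to prove, for each component $\mathbf{\Psi}^n_k$ ($k=1,\dots,4$) with target Sobolev exponent $(s_1,s_2,s_3,s_4)=(-\al,-2\al,1-2\al,-\al)$, an estimate of the form
\begin{equation}\label{eq:plan-moments}
\sup_{n\geq 1}\ \sup_{x}\ \mathbb{E}\big[|\Delta_j \mathbf{\Psi}^n_k(t,x)|^2\big]\,\lesssim\, 2^{-2(s_k+\ka)j}
\end{equation}
for some $\ka>0$ (where $\Delta_j$ denotes the $j$-th Littlewood--Paley block in the space variable), together with the corresponding time-increment bound — the same estimate for $\mathbf{\Psi}^n_k(t,\cdot)-\mathbf{\Psi}^n_k(\tau,\cdot)$ with an extra factor $|t-\tau|^{2\ka'}$ for some $0<\ka'<\ka$ — and the Cauchy counterpart of both, carrying an additional gain $2^{-\te(n\wedge m)}$, $\te>0$. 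The small margin $\ka$ is precisely what the strict inequality (\ref{regu-2}) provides, once $\al_0\triangleq \frac32-(H_0+H_1+H_2)$ has been identified as the critical regularity of $\<Psi>$ (and $-2\al_0$, $1-2\al_0$, $-\al_0$ as those of $\<Psi2>$, $\<IPsi2>$, $\<PsiIPsi2>$).

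I would then set up the spectral picture. Inserting the harmonizable representation of $B$ into $\dot B^n=\partial_t\partial_{x_1}\partial_{x_2}B^n$, and the resulting expression into the Duhamel formula for (\ref{equation-psi-n}), one gets that $\<Psi>^n(t,\cdot)$ is a first-order Wiener integral against $\widehat{W}$ whose kernel, up to a phase $e^{\imath x\cdot\eta}$ and the cutoff $\mathbf{1}_{\{|\xi|,|\eta|\leq 2^n\}}$, has modulus $|\Theta_t(\xi,\eta)|\,|\xi|^{\frac12-H_0}|\eta_1|^{\frac12-H_1}|\eta_2|^{\frac12-H_2}$, where $\Theta_t(\xi,\eta)=\int_0^t \frac{\sin((t-s)|\eta|)}{|\eta|}\,e^{\imath s\xi}\,ds$ is the oscillatory time-integral carrying the wave kernel and obeys the elementary bound $|\Theta_t(\xi,\eta)|\lesssim |\eta|^{-1}\big(1+\big|\,|\xi|-|\eta|\,\big|\big)^{-1}$. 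The process $\<Psi2>^n$ is then the associated second-chaos integral (the Wick subtraction in (\ref{approx-rp}) being exactly what removes its first-chaos part), $\<IPsi2>^n$ is obtained by inserting an additional wave-kernel factor in the output frequency, and $\<PsiIPsi2>^n=\<IPsi2>^n\<Psi>^n$ is expanded, via the product formula for multiple Wiener integrals, into a third-chaos integral plus a first-chaos \enquote{contraction} term — the latter arising from pairing one of the two noises carried by $\<IPsi2>^n$ with the noise carried by $\<Psi>^n$. In each case the chaos isometry turns the left-hand side of (\ref{eq:plan-moments}) into a purely deterministic integral over the frequency variables, and that is what remains to be bounded.

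For $\<Psi>$, $\<Psi2>$ and $\<IPsi2>$ these deterministic estimates are, in essence, those already carried out in \cite{deya-wave} under the stronger condition (\ref{old-conditions}), now to be pushed to the whole range (\ref{extension-h}). The mechanism is a systematic splitting of each frequency integral into near-resonant and off-resonant regions, controlled by means of the bound on $\Theta$ above together with two elementary convolution lemmas — the two-dimensional $\int_{\R^2}|\eta|^{-a}|\la-\eta|^{-b}\,d\eta\sim |\la|^{2-a-b}$ valid for $a,b<2<a+b$, and its one-dimensional analogue used to treat the $\eta_1$- and $\eta_2$-directions separately. Integrating out the time-frequency $\xi$ first replaces the kernel of $\<Psi>^n$ by an effective spatial weight $\Phi(\eta)\lesssim |\eta|^{-1-2H_0}|\eta_1|^{1-2H_1}|\eta_2|^{1-2H_2}$ (for $|\eta|\gtrsim 1$), and the power counting then yields $\mathbb{E}[|\Delta_j \<Psi>^n(t,x)|^2]\lesssim 2^{2\al_0 j}$, $\mathbb{E}[|\Delta_j \<Psi2>^n(t,x)|^2]\lesssim 2^{4\al_0 j}$ and $\mathbb{E}[|\Delta_j \<IPsi2>^n(t,x)|^2]\lesssim 2^{-2(1-2\al_0)j}$, uniformly in $n$ — that is, (\ref{eq:plan-moments}) for $k=1,2,3$. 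It is at this stage that the constraints of (\ref{constraint-h-i}) become transparent: the low-frequency parts remain integrable because $H_0+H_1+H_2\leq \frac54$ (hence $<\frac32$), whereas in the large-frequency analysis of $\<Psi2>$ — where one integrates the near-diagonal region $\eta'\approx -\eta$, so that $\Phi(\eta)\Phi(\eta')\approx \Phi(\eta)^2$ — the integrability on a fixed dyadic annulus forces exactly $H_1,H_2<\frac34$, while the summability over dyadic scales forces exactly $H_0+H_1+H_2>1$. Finally, the gain of one derivative in passing from $\<Psi2>$ to $\<IPsi2>=G\ast \<Psi2>$ is the standard regularising effect of the wave Duhamel operator; the stochastic input there merely serves to reach every integrability exponent $p\geq 2$, instead of just $p=2$ as a deterministic energy estimate would give.

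The genuinely new — and, I expect, hardest — part concerns the fourth object $\<PsiIPsi2>$. Its third-chaos component leads to a frequency integral of higher degree (three copies of the noise), coupling two wave-kernel factors — one inherited from $\<IPsi2>$ — with the three spatial weights; it is heavier, but still amenable to the same region-splitting and convolution machinery, and produces a strictly better exponent than the target $2^{2\al_0 j}$ (precisely because $\al_0<\frac12$ in the range (\ref{constraint-h-i})). The delicate term is the first-chaos contraction: once the noise of $\<Psi>^n$ has been paired against one leg of $\<IPsi2>^n$, one is left with a frequency integral whose convergence as $n\to\infty$ is genuinely borderline — this is the precise analogue, in the wave setting, of the \enquote{resonant} product $\<IPsi2>\cdot\<Psi>$ that ceases to be classically definable once $\al>\frac13$, and whose large-frequency analysis once more runs into the threshold $H_0+H_1+H_2>1$ (and, conversely, diverges for $H_0+H_1+H_2\leq 1$, which is what underlies the optimality statement of Proposition~\ref{prop:limit-case}). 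Once (\ref{eq:plan-moments}) and its variants are established for all four components — the time-increment bounds being obtained by the same power counting after writing $|e^{\imath t\xi}-e^{\imath \tau\xi}|\lesssim |t-\tau|^{\ka'}|\xi|^{\ka'}$ and trading a sliver of frequency decay, and the Cauchy-in-$n$ bounds likewise after observing that $\mathbf{1}_{\{|\xi|\leq 2^n\}}-\mathbf{1}_{\{|\xi|\leq 2^m\}}\neq 0$ forces $|\xi|\geq 2^{n\wedge m}$ — hypercontractivity upgrades all second moments to $p$-th moments, Kolmogorov's criterion turns the increment bounds into almost sure continuity in time with values in the relevant Besov spaces over $D$, and the Besov embedding places the limits, together with (thanks to the rate $2^{-\te(n\wedge m)}$) the convergence itself, in $L^p(\Omega;\mathcal{E}^{\al,p}_{T,D})$, as claimed.
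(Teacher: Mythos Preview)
Your proposal is correct and follows essentially the same strategy as the paper: reduce to second-moment bounds via hypercontractivity, then control the deterministic frequency-space integrals arising from the harmonizable representation and Wick/product expansions. The technical packaging differs somewhat --- you work with Littlewood--Paley blocks and a Kolmogorov/Besov argument for time regularity, whereas the paper evaluates directly the Bessel-potential quantity $\mathcal{F}^{-1}\big(\{1+|\cdot|^2\}^{|\tau|/2}\mathcal{F}(\tau^{n,m}_{s,t})\big)(x)$ and invokes the Garsia--Rodemich--Rumsey lemma; and for the fourth component $\<PsiIPsi2>$ you organize the computation by its chaos decomposition (third chaos plus first-chaos contraction), while the paper expands the full second moment via Wick's formula into three families of pairing terms (\ref{wick-term-1})--(\ref{wick-term-3}) that it estimates one by one. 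Both routes lead to the same frequency integrals and the same constraints, and your convolution-lemma bookkeeping corresponds to the paper's Lemmas~\ref{lem:tech-ordre-deux}, \ref{lem:norm-l2-l-h} and \ref{lem:techn-lem-order-three}.

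Two small inaccuracies are worth flagging. First, the upper bound $H_0+H_1+H_2\leq \frac54$ is not a low-frequency integrability condition; it merely delimits the regime being studied (the case $>\frac54$ having been treated in \cite{deya-wave}). Second, the divergence behind Proposition~\ref{prop:limit-case} is proved for the \emph{second} component $\<Psi2>$, not for the first-chaos contraction inside $\<PsiIPsi2>$; that said, your instinct is not wrong, since the paper's treatment of the latter (through the $L^2$-bound on $K^H$ in Lemma~\ref{lem:norm-l2-l-h}) also hinges on $H_0+H_1+H_2>1$.
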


\

The condition $H_0+H_1+H_2 >1$ that appears in (\ref{constraint-h-i}) turns out to be optimal in the framework of our strategy, as shown by the following divergence property:
\begin{proposition}\label{prop:limit-case}
Let $(H_0,H_1,H_2)\in (0,1)^{3}$ be such that 
\begin{equation}\label{constraint-h-i-opt}
H_0+H_1+H_2 \leq  1 \ .
\end{equation}
Then, for all $\al >0$ and $t>0$, one has $\mathbb{E}\Big[ \|\<Psi2>^n(t,.)\|_{\mathcal{W}^{-2\al,2}(D)}^2\Big] \to \infty$ as $n\to \infty$.
\end{proposition}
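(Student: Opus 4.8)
The strategy is to bound the Sobolev norm from below by a single well-chosen test pairing, to express the resulting expectation through the covariance of $\<Psi>^n(t,\cdot)$ by means of Wick's formula, and then to read off the divergence from the harmonizable representation of the noise.

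\smallskip
\noindent\textbf{Step 1: reduction to a covariance integral.} We may assume $D$ has nonempty interior. Fix once and for all a nonnegative, nonzero $\chi\in C^\infty_c(\R^2)$ with support contained in the interior of $D$. By the standard duality between $\mathcal{W}^{2\al,2}(\R^2)$ and the localized space $\mathcal{W}^{-2\al,2}(D)$, for every (tempered) $f$ one has $\|f\|_{\mathcal{W}^{-2\al,2}(D)}\geq|\langle f,\chi\rangle|/\|\chi\|_{\mathcal{W}^{2\al,2}}$, and $\|\chi\|_{\mathcal{W}^{2\al,2}}$ is a finite constant for each fixed $\al>0$ (this is where $\al$ enters, and only here). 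Hence it suffices to show that $I_n\to\infty$, where, using that $\<Psi2>^n(t,\cdot)$ is a smooth function for each fixed $n$,
$$I_n\ \triangleq\ \mathbb{E}\big[\langle\<Psi2>^n(t,\cdot),\chi\rangle^2\big]\ =\ \int_{\R^2}\!\!\int_{\R^2}\chi(y)\chi(y')\,\mathbb{E}\big[\<Psi2>^n(t,y)\<Psi2>^n(t,y')\big]\,dy\,dy'.$$
Since $\<Psi2>^n(t,y)$ lies in the second Wiener chaos generated by the Gaussian field $\<Psi>^n$, Wick's (Isserlis') formula gives $\mathbb{E}[\<Psi2>^n(t,y)\<Psi2>^n(t,y')]=2\big(C^n_t(y,y')\big)^2$ with $C^n_t(y,y')\triangleq\mathbb{E}\big[\<Psi>^n(t,y)\<Psi>^n(t,y')\big]$, so $I_n=2\int\!\int\chi(y)\chi(y')\big(C^n_t(y,y')\big)^2\,dy\,dy'$.

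\smallskip
\noindent\textbf{Step 2: spectral form.} Feeding the harmonizable representation of $\dot B^n=\partial_t\partial_{x_1}\partial_{x_2}B^n$ into Duhamel's formula for (\ref{equation-psi-n}), with $\fouri_x(G_{t-s})(\eta)=\sin((t-s)|\eta|)/|\eta|$, yields
$$\<Psi>^n(t,y)=c_H\!\!\int_{|\xi|\le 2^n}\!\!\int_{|\eta|\le 2^n}\!\!\widehat W(d\xi,d\eta)\,e^{\imath y\cdot\eta}\,\frac{(\imath\eta_1)(\imath\eta_2)}{|\eta_1|^{H_1+\frac12}|\eta_2|^{H_2+\frac12}}\,\frac{K_t(\xi,\eta)}{|\xi|^{H_0+\frac12}},\qquad K_t(\xi,\eta)\triangleq\imath\xi\!\int_0^t\! e^{\imath s\xi}\,\frac{\sin((t-s)|\eta|)}{|\eta|}\,ds.$$
Taking covariances (and using $|\eta_j|^2|\eta_j|^{-(2H_j+1)}=|\eta_j|^{1-2H_j}$), $C^n_t(y,y')$ depends only on $z=y-y'$ and equals, up to a positive constant, $\int_{\R^2}m^n_t(\eta)\,e^{\imath(y-y')\cdot\eta}\,d\eta$, where the nonnegative, compactly supported spectral density is
$$m^n_t(\eta)\ \triangleq\ |\eta_1|^{1-2H_1}|\eta_2|^{1-2H_2}\,\mathbf{1}_{\{|\eta|\le 2^n\}}\,N^n_t(\eta),\qquad N^n_t(\eta)\ \triangleq\ \int_{|\xi|\le 2^n}\frac{|K_t(\xi,\eta)|^2}{|\xi|^{2H_0+1}}\,d\xi.$$
Because $H_j<1$ (so $1-2H_j>-1$) and $2H_0+1>1$, one checks that $m^n_t\in L^1\cap L^2(\R^2)$ for each $n$; hence $\big(C^n_t(y,y')\big)^2$, viewed as a function of $z$, has Fourier transform proportional to $m^n_t\ast m^n_t\ge 0$, and a Parseval computation gives $I_n=c\int_{\R^2}(m^n_t\ast m^n_t)(\zeta)\,|\widehat\chi(\zeta)|^2\,d\zeta$ for some $c>0$. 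Choosing $\ep,\der>0$ with $|\widehat\chi|^2\ge\der$ on $\{|\zeta|\le\ep\}$ (possible since $\widehat\chi(0)=\int\chi>0$) and unfolding the convolution,
$$I_n\ \geq\ c\,\der\int_{|\zeta|\le\ep}(m^n_t\ast m^n_t)(\zeta)\,d\zeta\ =\ c\,\der\int_{\R^2}\!\!\int_{\R^2}m^n_t(\eta)\,m^n_t(\eta')\,\mathbf{1}_{\{|\eta+\eta'|\le\ep\}}\,d\eta\,d\eta'.$$

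\smallskip
\noindent\textbf{Step 3: the divergent lower bound.} Write $S\triangleq H_0+H_1+H_2$. The crucial estimate is a high-frequency lower bound on $N^n_t$: there is $\eta_0=\eta_0(t)$ such that
$$N^n_t(\eta)\ \gtrsim\ |\eta|^{-2H_0-1}\qquad\text{for all }\eta\text{ with }\eta_0\le|\eta|\le 2^{n-2},$$
the implicit constant depending only on $t$. This comes from the resonance of $K_t(\xi,\eta)$ at $\xi=\pm|\eta|$: writing $\sin((t-s)|\eta|)$ in exponential form decomposes $K_t(\xi,\eta)$ as $\imath\xi\,e^{\imath t|\eta|}\frac{e^{\imath t(\xi-|\eta|)}-1}{2\imath|\eta|\,\imath(\xi-|\eta|)}$ plus an analogous term with $\xi+|\eta|$ in place of $\xi-|\eta|$; the latter is $O(1/|\eta|)$, while on the window $\{|\xi-|\eta||\le 1/t\}$ the former has modulus $\gtrsim t$ (using $|e^{\imath u}-1|/|u|\ge 2\sin(1/2)$ for $|u|\le 1$, together with $|\xi|\asymp|\eta|$ there). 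Hence $N^n_t(\eta)\ge\int_{|\xi-|\eta||\le 1/t}|K_t(\xi,\eta)|^2|\xi|^{-2H_0-1}\,d\xi\gtrsim |\eta|^{-2H_0-1}$ as soon as $|\eta|$ is large and $2^n\ge 2|\eta|$. Consequently, on the cone $\Gamma_n\triangleq\{\eta:\ \eta_0\le|\eta|\le 2^{n-2},\ |\eta_1|\wedge|\eta_2|\ge|\eta|/10\}$ we get $m^n_t(\eta)\gtrsim|\eta|^{(1-2H_1)+(1-2H_2)-2H_0-1}=|\eta|^{1-2S}$; and for $\eta\in\Gamma_n$ with $|\eta|$ large compared to $\ep$, every $\eta'$ with $|\eta+\eta'|\le\ep$ lies in a slightly widened cone with $|\eta'|\asymp|\eta|$, so $m^n_t(\eta')\gtrsim|\eta|^{1-2S}$ as well. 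Plugging this into the bound of Step 2,
$$I_n\ \gtrsim\ \ep^2\int_{\Gamma_n,\ |\eta|\ge R_0}|\eta|^{2(1-2S)}\,d\eta\ \gtrsim\ \ep^2\int_{R_0}^{2^{n-2}}r^{\,3-4S}\,dr$$
for a fixed $R_0=R_0(\ep,t)$. By hypothesis (\ref{constraint-h-i-opt}), $S\le 1$, hence $3-4S\ge-1$, and the last integral therefore tends to $+\infty$ as $n\to\infty$ (logarithmically when $S=1$, as a power of $2^n$ when $S<1$). Combining the three steps, $\mathbb{E}\big[\|\<Psi2>^n(t,\cdot)\|_{\mathcal{W}^{-2\al,2}(D)}^2\big]\gtrsim I_n\to\infty$, which is the claim.

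\smallskip
\noindent\textbf{Where the difficulty lies.} Steps 1 and 2 are routine (duality for localized Sobolev spaces, Wick's formula, $L^1\cap L^2$ membership of $m^n_t$, Parseval). The genuine work is the high-frequency lower bound on $N^n_t$ in Step 3 — isolating the $\xi=\pm|\eta|$ resonance of the oscillatory integral $K_t(\xi,\eta)$ and keeping track of constants that are uniform in $n$ (for $2^n$ large) together with the exact power of $|\eta|$ — since it is precisely this computation that fixes the threshold $S=1$ and thereby shows that the condition $H_0+H_1+H_2>1$ of Proposition \ref{prop:stoch-constr} is sharp.
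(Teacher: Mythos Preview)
Your proof is correct and follows a genuinely different route from the paper's. Both arguments hinge on the same high-frequency lower bound for the time-integrated kernel, namely $N^n_t(\eta)=\gga^{H_0,n}_t(|\eta|)\gtrsim |\eta|^{-2H_0-1}$ for $|\eta|$ large and $\leq c\,2^n$; the paper isolates this as Lemma~\ref{tech-lem-explos} (writing $\gga^{H_0,n}_t(\rho)\geq c\,t\,\rho^{-1-2H_0}+Q^{H_0}_t(\rho)$ with a controlled remainder), while you obtain it directly from the resonance of $K_t(\xi,\eta)$ at $\xi=\pm|\eta|$. Where the two proofs diverge is in how this bound is fed into the Sobolev norm. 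The paper expands $\mathbb{E}\big[\|\<Psi2>^n(t,\cdot)\|_{\cw^{-2\al,2}}^2\big]$ as the double integral with weight $\{1+|\eta-\etati|^2\}^{-2\al}$, restricts to a near-diagonal strip $\etati_i\in[\tfrac12\eta_i,\eta_i]$, and then must split into a leading term plus a remainder $R^n_t$, introducing auxiliary indices $H_1'\geq H_1$, $H_2'\geq H_2$ with $\tfrac34<H_0+H_1'+H_2'\leq 1$ precisely so that $R^n_t$ stays bounded. Your test-function trick sidesteps all of this: pairing against a fixed $\chi\in C^\infty_c(D^\circ)$ removes $\al$ from the picture (it survives only as the harmless constant $\|\chi\|_{\cw^{2\al,2}}$), Parseval turns the second-chaos expectation into $\int(m^n_t\ast m^n_t)\,|\widehat\chi|^2$, and the restriction $|\zeta|\leq\ep$ forces $\eta'\approx -\eta$, which plays the role of the paper's near-diagonal restriction without any remainder to control or auxiliary indices to introduce. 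Your approach is therefore more streamlined; the paper's has the virtue of being closer to the actual norm and making the threshold transparent already at the level of the $\{1+|\eta-\etati|^2\}^{-2\al}$-weighted integral.

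One minor remark: your claim that $m^n_t\in L^2(\R^2)$ can fail when some $H_j\geq\tfrac34$ (which is compatible with $H_0+H_1+H_2\leq 1$, e.g.\ $H=(0.1,0.8,0.1)$), since the singularity $|\eta_j|^{2(1-2H_j)}$ is then non-integrable at $\eta_j=0$. This does not affect your argument, however: only $m^n_t\in L^1$ is used, to justify Fubini in the identity $(C^n_t)^2=\mathcal{F}^{-1}(m^n_t\ast m^n_t)$ and the subsequent Parseval step, and $L^1$-membership holds for all $H_j\in(0,1)$.
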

This result thus points out the limit of the Wick-renormalization procedure within our approach. In other words, any attempt to handle the rougher situation $H_0+H_1+H_2 \leq  1$ through the strategy displayed in Section \ref{subsec:heuri} should lean on a more sophisticated renormalization method. 

\

\subsection{Main results}

Let us recall that, for the rest of the paper, we have fixed a smooth function $\rho: \R^2 \to \R$ with support included in $\{|x|\leq 2\}$ and satisfying $\rho(x)=1$ for $|x|\leq 1$. Based on the considerations of Section \ref{subsec:heuri} and relying on the result of Proposition \ref{prop:stoch-constr}, the following definition of a (local-in-space) solution for (\ref{dim-d-quadratic-wave}) naturally arises:

\begin{definition}
Fix $(H_0,H_1,H_2)\in (0,1)^{3}$ such that condition (\ref{constraint-h-i}) is satisfied, and for every $T>0$, let 
$$\mathbf{\Psi}=(\<Psi>,\<Psi2>,\<IPsi2>,\<PsiIPsi2>)\in \bigcap_{\al>\frac32-(H_0+H_1+H_2)}\bigcap_{p\geq 2}  L^p(\Omega;\mathcal{E}^{\al,p}_{T,D})$$
be the process defined through Proposition \ref{prop:stoch-constr}, with $D\triangleq \{|x|\leq 2\}$. A stochastic process $(u(t,x))_{t\in [0,T],x\in \R^2}$ is said to be a Wick solution (on $[0,T]$) of the equation
\begin{equation}\label{2-d-quadratic-wave-defi}
\left\{
\begin{array}{l}
\partial^2_t u-\Delta u = \rho^2u^2 +\dot{B} \quad  , \quad x\in \R^2 \ ,\\
u(0,.)=\phi_0 \quad , \quad \partial_t u(0,.)=\phi_1 \ ,
\end{array}
\right.
\end{equation}
if, almost surely, the auxiliary process $w\triangleq u-\<Psi>-\<IPsi2>$ is a mild solution (on $[0,T]$) of the equation 
\begin{align}
&w=\partial_t (G_t\ast \phi_0)+G_t\ast \phi_1\nonumber\\
&\hspace{0.5cm}+ G\ast \big( \rho^2w^2\big)+G \ast \big( \rho^2\big(\<IPsi2>\big)^2\big)+2\, G\ast \big( (\rho w) \cdot \big( \rho \<IPsi2>\big) \big)+2\, G \ast \big( (\rho w) \cdot \big( \rho \<Psi>\big) \big)+2\, G \ast \big( \rho^2\<PsiIPsi2>\big) \ .\label{mild-equation-definition}
\end{align}
\end{definition}

\

Our main results regarding equation (\ref{2-d-quadratic-wave-defi}) finally read as follows:
\begin{theorem}\label{main-theo}
Let $(\phi_0,\phi_1) \in \cw^{\frac12,2}(\R^d) \times \cw^{-\frac12,2}(\R^d)$ and let $(H_0,H_1,H_2)\in (0,1)^{3}$ be such that 
$$
0<H_1<\frac34 \quad , \quad 0<H_2< \frac34 \quad , \quad 1 < H_0+H_1+H_2 \leq  \frac54 \ .
$$
Then, almost surely, there exists a time $T_0>0$ such that Equation (\ref{2-d-quadratic-wave-defi}) admits a unique Wick solution $u$ in the set 
\begin{equation}\label{defi-set-s-t}
\cs_{T_0}\triangleq \, \<Psi>+\<IPsi2>   + L^\infty_{T_0}\cw^{\frac12,2}  \ .
\end{equation}
\end{theorem}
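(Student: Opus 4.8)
The plan is to prove Theorem \ref{main-theo} by a standard Banach fixed-point argument applied to the map defined by the right-hand side of \eqref{mild-equation-definition}, working in a suitably chosen ball of $L^\infty_{T_0}\cw^{\frac12,2}$ for $T_0$ small. First I would fix a realization $\omega$ in the almost-sure event on which $\mathbf{\Psi}=(\<Psi>,\<Psi2>,\<IPsi2>,\<PsiIPsi2>)$ belongs to $\mathcal{E}^{\al,p}_{T,D}$ with $D=\{|x|\le 2\}$, for some fixed $\al\in(\frac13,\frac12)$ satisfying $\al>\frac32-(H_0+H_1+H_2)$ (such an $\al$ exists precisely because $H_0+H_1+H_2>1$) and for $p$ large enough (to be chosen along the way so that the various Sobolev products and Strichartz estimates close). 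For such $\omega$, all the stochastic ingredients are now \emph{deterministic} inputs of prescribed regularity, and the equation for $w$ is a deterministic PDE. Define
$$
\Gamma(w)\triangleq\partial_t(G_t\ast\phi_0)+G_t\ast\phi_1+G\ast(\rho^2w^2)+G\ast\big(\rho^2(\<IPsi2>)^2\big)+2G\ast\big((\rho w)\cdot(\rho\<IPsi2>)\big)+2G\ast\big((\rho w)\cdot(\rho\<Psi>)\big)+2G\ast(\rho^2\<PsiIPsi2>)\,,
$$
and the goal is to show $\Gamma$ is a contraction on $B_R\triangleq\{w\in L^\infty_{T_0}\cw^{\frac12,2}:\|w\|\le R\}$ for appropriate $R,T_0$.

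The key estimates, carried out term by term, would be: (i) the linear terms $\partial_t(G_t\ast\phi_0)+G_t\ast\phi_1$ lie in $L^\infty_{T_0}\cw^{\frac12,2}$ by the energy/Strichartz estimates for the wave group, using $(\phi_0,\phi_1)\in\cw^{\frac12,2}\times\cw^{-\frac12,2}$; (ii) $G\ast(\rho^2w^2)$: bound $\rho^2 w^2$ via the product rule in Sobolev spaces (Proposition \ref{prop:product}) and an algebra-type estimate, then apply the Strichartz smoothing of $G$ to gain one derivative — here a factor $T_0^{\theta}$ with $\theta>0$ should come out, giving the smallness needed for contraction; (iii) $G\ast\big(\rho^2(\<IPsi2>)^2\big)$ and $G\ast(\rho^2\<PsiIPsi2>)$ are \emph{$w$-independent} source terms: since $\<IPsi2>\in L^\infty_T\cw^{1-2\al,p}_D$ with $1-2\al>0$, its square makes classical sense, and $\<PsiIPsi2>\in L^\infty_T\cw^{-\al,p}_D$, so after convolution with $G$ (gaining one derivative) both land in $L^\infty_{T_0}\cw^{\frac12,2}$ for $\al<\frac12$ and $p$ large; (iv) $2G\ast\big((\rho w)\cdot(\rho\<IPsi2>)\big)$: use the product estimate with $w\in\cw^{\frac12,2}$ and $\<IPsi2>\in\cw^{1-2\al,p}$, where $\frac12+(1-2\al)>0$ holds for $\al<\frac34$, then Strichartz; (v) the genuinely delicate term $2G\ast\big((\rho w)\cdot(\rho\<Psi>)\big)$: here $\<Psi>$ has \emph{negative} regularity $-\al$ with $\al\in(\frac13,\frac12)$, so the product $w\cdot\<Psi>$ requires $w$ to be better than $\cw^{\al,\cdot}$; with $w\in\cw^{\frac12,2}$ the Sobolev exponents satisfy $\frac12+(-\al)>0$ exactly because $\al<\frac12$, so Proposition \ref{prop:product} applies and yields a product in some negative-order space, after which $G\ast(\cdot)$ gains a derivative to reach $\cw^{\frac12,2}$ — the mixed integrability $p$ versus $2$ has to be tracked carefully here.

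Once these bounds are assembled, standard reasoning gives $\|\Gamma(w)\|_{L^\infty_{T_0}\cw^{\frac12,2}}\le C_0+C_1T_0^{\theta}(R+R^2)$ and $\|\Gamma(w_1)-\Gamma(w_2)\|\le C_1T_0^{\theta}(1+R)\|w_1-w_2\|$ for the relevant constants depending on the (now fixed) norms of $\mathbf{\Psi}$ and of $(\phi_0,\phi_1)$; choosing $R=2C_0$ and then $T_0$ small enough (depending on $\omega$ through the $\mathbf{\Psi}$-norms, which is why $T_0$ is random) makes $\Gamma$ a contraction of $B_R$ into itself, yielding a unique fixed point $w\in L^\infty_{T_0}\cw^{\frac12,2}$. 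Setting $u\triangleq w+\<Psi>+\<IPsi2>$ then produces the unique Wick solution in $\cs_{T_0}=\<Psi>+\<IPsi2>+L^\infty_{T_0}\cw^{\frac12,2}$, and uniqueness within that set follows because any two solutions give fixed points of $\Gamma$ in the same space (a continuation/maximality argument on the contraction constant closes the gap between an arbitrary fixed point and one in $B_R$). I expect the main obstacle to be step (v): pinning down the correct Sobolev-product and Strichartz exponents so that $w\cdot\<Psi>$ is controlled with a genuine positive power of $T_0$ while keeping the target space exactly $\cw^{\frac12,2}$ — this is the place where the threshold $\al<\frac12$, equivalently $H_0+H_1+H_2>1$, is used and where the third-order object $\<PsiIPsi2>$ is indispensable; by contrast, the nonlinear self-interaction terms in (ii) and the source terms in (iii)–(iv) are comparatively routine given the Strichartz inequalities and the product rules already recalled in the paper.
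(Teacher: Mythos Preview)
Your proposal is correct and follows essentially the same route as the paper: the proof there invokes Proposition~\ref{prop:deterministic-result} (whose term-by-term estimates match your items (i)--(v) and yield bounds of the form \eqref{boun-gamma-1}--\eqref{boun-gamma-2}), then runs the Banach fixed-point argument in $L^\infty_{T_0}\cw^{\frac12,2}$ for small $T_0$ and defines $u\triangleq \<Psi>+\<IPsi2>+w$. Two small calibrations worth noting: the paper works with the specific choice $p=4$ (rather than ``$p$ large enough'') and $\al\in(\frac14,\frac12)$, and it only needs the elementary smoothing estimate of Proposition~\ref{prop:regu-wave} rather than full Strichartz inequalities.
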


\

Using the continuity properties of equation (\ref{mild-equation-definition}) with respect to $\mathbf{\Psi}$ (see estimate (\ref{boun-gamma-2}) below), we are also able to offer the following sequential approach to the problem: 

\begin{theorem}\label{main-theo-lim}
Under the assumptions of Theorem \ref{main-theo}, consider the sequence $(u^n)_{n\geq 0}$ of (classical) solutions to the renormalized equation
\begin{equation}\label{approx-equation}
\left\{
\begin{array}{l}
\partial^2_t u^n-\Delta u^n = \rho^2 (u^n)^2-\si^n+(1-\rho^2) (\<Psi>^n)^2+\dot{B}^n \quad , \quad   x\in \R^2 \ ,\\
u^n(0,.)=\phi_0 \quad , \quad \partial_t u^n(0,.)=\phi_1 \ ,
\end{array}
\right.
\end{equation}
where $\si^n_t(x)=\si^n_t \triangleq \mathbb{E}\big[ (\<Psi>^n_t(x))^2\big]$. Then
\begin{equation}\label{estim-cstt}
\si^n_t\stackrel{n\to \infty}{\sim} c^1_{H}\, t \, 2^{2n(\frac32-(H_0+H_1+H_2))}
\end{equation}
for some constant $c^1_{H}$, and, almost surely, there exists a time $T_0>0$ and a subsequence of $(u^n)$ that converges in the space $L^\infty_{T_0}\cw^{-\al,2}_D$ to the solution $u$ exhibited in Theorem \ref{main-theo}, for every $\al >\frac32-(H_0+H_1+H_2)$.
\end{theorem}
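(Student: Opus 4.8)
\smallskip
\noindent\emph{Proof strategy.} The statement splits into two essentially independent parts: the sharp asymptotics (\ref{estim-cstt}) for the renormalisation constant $\si^n_t$, and the convergence of a subsequence of $(u^n)$ towards $u$.

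For (\ref{estim-cstt}), the plan is to compute $\si^n_t$ explicitly from the harmonizable representation. Writing the retarded wave propagator in Fourier, $\<Psi>^n_t(x)$ appears as the Wiener integral
\[
\<Psi>^n_t(x)=c_H\int_{|\xi|\le 2^n}\int_{|\eta|\le 2^n}\widehat{W}(d\xi,d\eta)\,\Big(\int_0^t \tfrac{\sin((t-s)|\eta|)}{|\eta|}\,\imath\xi\,e^{\imath s\xi}\,ds\Big)\,\frac{\imath\eta_1\,\imath\eta_2\,e^{\imath x\cdot\eta}}{|\xi|^{H_0+\frac12}|\eta_1|^{H_1+\frac12}|\eta_2|^{H_2+\frac12}}\ ,
\]
so the isometry property of Wiener integrals gives (and, since $|e^{\imath x\cdot\eta}|=1$, re-proves the claimed $x$-independence of $\si^n_t$)
\[
\si^n_t=c_H^2\int_{|\xi|\le 2^n}\int_{|\eta|\le 2^n}|\xi|^{1-2H_0}|\eta_1|^{1-2H_1}|\eta_2|^{1-2H_2}\,|\eta|^{-2}\,\Big|\int_0^t\sin((t-s)|\eta|)\,e^{\imath s\xi}\,ds\Big|^2\,d\xi\,d\eta\ .
\]
I would then evaluate the inner time integral: its modulus squared is the sum of the two resonant kernels $\tfrac{\sin^2(t(\xi\mp|\eta|)/2)}{(\xi\mp|\eta|)^2}$ and of a cross term whose numerator vanishes on $\{\xi=\pm|\eta|\}$. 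For the resonant parts, the rescaling $(\xi,\eta)=2^n(\la,\zeta)$ extracts an explicit power of $2^n$ from the weights, while the leftover $\la$-integral carries a factor $\tfrac{\sin^2(t2^n(\la-|\zeta|)/2)}{(\la-|\zeta|)^2}$ concentrating at $\la=|\zeta|$ with mass $\sim\tfrac{\pi}{2}\,t\,2^n$; a dominated-convergence argument then produces (\ref{estim-cstt}), with $c^1_H=c_H^2\,\pi\int_{|\zeta|\le 1}|\zeta|^{-1-2H_0}|\zeta_1|^{1-2H_1}|\zeta_2|^{1-2H_2}\,d\zeta\in(0,\infty)$ (finiteness uses $H_0+H_1+H_2<\tfrac32$ near the origin and $H_1,H_2<1$ near the axes). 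It remains to see that the cross term is of strictly lower order: since its numerator vanishes on the resonant set, there is no $\sin^2$-type enhancement and an oscillatory-integral bound should give a size $O(2^{2n(1-(H_0+H_1+H_2))+\ep})$, negligible against the main term. \emph{This last estimate is the one genuinely delicate point of this part.}

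For the convergence of $(u^n)$, I would first record the algebraic identity behind the renormalisation. Since $\<Psi2>^n=(\<Psi>^n)^2-\si^n$ with $\si^n$ not depending on the space variable, a direct computation shows that $w^n\triangleq u^n-\<Psi>^n-\<IPsi2>^n$ has initial data $(\phi_0,\phi_1)$ and satisfies
\[
\partial^2_t w^n-\Delta w^n=\rho^2\big[(u^n)^2-(\<Psi>^n)^2\big]=\rho^2(w^n)^2+\rho^2\big(\<IPsi2>^n\big)^2+2\rho^2 w^n\<IPsi2>^n+2\rho^2 w^n\<Psi>^n+2\rho^2\<PsiIPsi2>^n\ ,
\]
that is, $w^n$ is the mild solution of (\ref{mild-equation-definition}) with $\mathbf{\Psi}$ replaced by $\mathbf{\Psi}^n=(\<Psi>^n,\<Psi2>^n,\<IPsi2>^n,\<PsiIPsi2>^n)$. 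By Proposition \ref{prop:stoch-constr}, $\mathbf{\Psi}^n\to\mathbf{\Psi}$ in $L^p(\Omega;\mathcal{E}^{\al,p}_{T,D})$ for every $p\geq 2$ and $\al>\tfrac32-(H_0+H_1+H_2)$; a diagonal extraction over a countable family of such pairs produces a subsequence $(n_k)$ along which $\mathbf{\Psi}^{n_k}\to\mathbf{\Psi}$ almost surely in all these spaces, and in particular $R\triangleq\sup_k\|\mathbf{\Psi}^{n_k}\|_{\mathcal{E}^{\al_0,p_0}_{T,D}}<\infty$ almost surely, for the exponents $(\al_0,p_0)$ entering the proof of Theorem \ref{main-theo}. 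I would then invoke the deterministic analysis of Section \ref{subsec:solv-equa}, which yields a time $T_0=T_0(R,\phi_0,\phi_1)>0$, uniform over the ball $\{\|\cdot\|_{\mathcal{E}^{\al_0,p_0}_{T,D}}\le R\}$, on which (\ref{mild-equation-definition}) is uniquely solvable in a fixed ball of $L^\infty_{T_0}\cw^{\frac12,2}$ and depends Lipschitz-continuously on $\mathbf{\Psi}$ (this is the content of estimate (\ref{boun-gamma-2})). Applied to $\mathbf{\Psi}$ and to $\mathbf{\Psi}^{n_k}$ (for $k$ large enough that $\|\mathbf{\Psi}^{n_k}\|\le R$), and noting that for such $T_0$ this mild solution lifts, via $u=w+\<Psi>^{n_k}+\<IPsi2>^{n_k}$, to the classical solution $u^{n_k}$ of (\ref{approx-equation}) on $[0,T_0]$ — so that $w^{n_k}$ is precisely this mild solution — it follows that $w^{n_k}\to w$ in $L^\infty_{T_0}\cw^{\frac12,2}$. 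Finally, writing $u^{n_k}-u=(w^{n_k}-w)+(\<Psi>^{n_k}-\<Psi>)+(\<IPsi2>^{n_k}-\<IPsi2>)$ and fixing, for a target $\al>\tfrac32-(H_0+H_1+H_2)$, an auxiliary index $\al'\in(\tfrac32-(H_0+H_1+H_2),\al]$ close enough to the threshold that $\cw^{-\al',2}(D)$ and $\cw^{1-2\al',2}(D)$ both embed continuously into $\cw^{-\al,2}(D)$, I would combine these embeddings and $\cw^{\frac12,2}(\R^2)\hookrightarrow\cw^{-\al,2}(D)$ with the almost sure convergences $\<Psi>^{n_k}\to\<Psi>$ in $L^\infty_T\cw^{-\al',2}_D$ and $\<IPsi2>^{n_k}\to\<IPsi2>$ in $L^\infty_T\cw^{1-2\al',2}_D$ (Proposition \ref{prop:stoch-constr}, $p=2$) to obtain $u^{n_k}\to u$ in $L^\infty_{T_0}\cw^{-\al,2}_D$. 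Here the only point requiring care is the uniformity in $k$ of the existence time $T_0$, which follows from the almost sure boundedness of $\{\mathbf{\Psi}^{n_k}\}$; overall, the genuine obstacle of the theorem is the oscillatory-integral estimate for the cross term in the first part.
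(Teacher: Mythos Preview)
Your proposal is correct and follows essentially the same route as the paper. The algebraic reduction $w^n\triangleq u^n-\<Psi>^n-\<IPsi2>^n$ and the verification that $w^n$ solves (\ref{mild-equation-definition}) with $\mathbf{\Psi}^n$ in place of $\mathbf{\Psi}$ match the paper exactly (the paper carries out the intermediate step via $v^n\triangleq u^n-\<Psi>^n$, but your shortcut $\partial^2_t w^n-\Delta w^n=\rho^2[(u^n)^2-(\<Psi>^n)^2]$ is equivalent). The convergence argument via a.s.\ extraction, uniform $T_0$ from boundedness of $(\mathbf{\Psi}^{n_k})$, and the Lipschitz estimate (\ref{boun-gamma-2}) is also the paper's argument, which it abbreviates by citing the proof of \cite[Theorem 1.7]{deya-wave}.

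The only notable difference is in the treatment of (\ref{estim-cstt}). Where you propose a hands-on analysis of the resonant kernels $\sin^2(t(\xi\mp|\eta|)/2)/(\xi\mp|\eta|)^2$ plus a cross term, and flag the cross-term bound as the ``genuinely delicate point'', the paper instead passes to polar coordinates in $\eta$ (using that $|\ga_t(\xi,|\eta|)|^2$ is radial in $\eta$) to obtain
\[
\si^n_t=c\int_0^{2^n}\frac{dr}{r^{2(H_1+H_2)-3}}\int_{|\xi|\le 2^n}\frac{d\xi}{|\xi|^{2H_0-1}}|\ga_t(\xi,r)|^2\ ,
\]
and then simply invokes \cite[Proposition 2.4]{deya-wave}, where exactly this inner integral (including the cross term) has already been analysed. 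So what you identify as the main obstacle is in fact already available off the shelf; your direct argument would also work, but the paper's reduction makes the proof a two-line citation.
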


\

\begin{remark}
0n the unit ball $\{|x|\leq 1\}$, that is on the space-domain we actually focus, equation (\ref{approx-equation}) reduces to the Wick-renormalized model $\partial^2_t u^n-\Delta u^n = [(u^n)^2-\si^n]+\dot{B}^n$, as expected. In fact, a global-in-space solution (i.e. the consideration of $\rho \equiv 1$ on $\R^2$ in the above results) could perhaps be obtained through the involvement of weighted spaces in the subsequent analysis, both in the convergence results of Proposition \ref{prop:stoch-constr} and in the study of the deterministic equation.  Nevertheless, we expect this extension to be the source of technical stability issues (see for instance the developments in \cite{hairer-labbe} for the parabolic setting) and therefore we postpone these investigations to a (possible) future publication. On the other hand, studying the equation on the 2D-torus (with appropriate boundary conditions) should certainly give rise to very similar estimates and results. This would however require the introduction of a fractional Brownian noise on the torus, a model that we find slightly more \enquote{exotic} than ours. 
\end{remark}

\begin{remark}
At the level of the approximated equation (\ref{approx-equation}), another (perhaps more natural) possibility would be to consider, just as in \cite[Theorem 1.7]{deya-wave}, the approximation given by 
\begin{equation}\label{approx-equation-alter}
\partial^2_t u^n-\Delta u^n = \rho^2 ((u^n)^2-\si^n)+\dot{B}^n \ .
\end{equation}
However, expanding equation (\ref{approx-equation-alter}) along the pattern of Section \ref{subsec:heuri} readily leads us to the consideration of the $\rho$-dependent path
$$\widetilde{\mathbf{\Psi}}^n =\big(\<Psi>^n,\<Psi2>^n,G\ast \big(\rho^2 \<Psi2>^n\big),\big[G\ast \big(\rho^2 \<Psi2>^n\big)\big] \<Psi>^n\big)$$
in place of the ($\rho$-independent) path $\mathbf{\Psi}^n =\big(\<Psi>^n,\<Psi2>^n,\<IPsi2>^n,\<PsiIPsi2>^n\big)$ at the core of the above analysis. We have thus preferred a more \enquote{intrinsic} expression for this central object, which explains our focus on (\ref{approx-equation}) (see Section \ref{subsec:proof-main} for more details on the transition from (\ref{approx-equation}) to (\ref{mild-equation-definition})). 
\end{remark}

\

The rest of the paper is organized along a natural two-part splitting. In Section \ref{sec:determ-eq}, we will handle the deterministic aspects of the problem, that is the well-posedness of equation (\ref{mild-equation-definition}) once endowed with a path $\mathbf{\Psi}=(\<Psi>,\<Psi2>,\<IPsi2>,\<PsiIPsi2>) \in  \mathcal{E}^{\al,p}_{T,D}$. The proofs of Theorem \ref{main-theo} and Theorem \ref{main-theo-lim} will almost immediately follow (Section \ref{subsec:proof-main}). Section \ref{sec:stoch} will then be devoted to the technical stochastic estimates behind the convergence statement of Proposition \ref{prop:stoch-constr} and the explosion phenomenon of Proposition \ref{prop:limit-case}.

\

Throughout the paper, we will denote by $A \lesssim B$ any estimate of the form $A\leq cB$, where $c >0$ is a constant that does not depend on the parameters under consideration.

\

\textbf{Acknowledgements}: I am grateful to Laurent Thomann for his encouragements and for his advice about the treatment of the deterministic equation.

\

\section{Auxiliary (deterministic) equation}\label{sec:determ-eq}
Let us first recall that for all parameters $\al >0$, $p\geq 1$, $T>0$, and for every domain $D\subset \R^2$, the functional space $\mathcal{E}^{\al,p}_{T,D}$ has been introduced in (\ref{defi:e-al-p}). Also, for the whole study, we have fixed a smooth cut-off functions $\rho$ with support in $\{|x|\leq 2\}$ (such that $\rho(x)=1$ for $|x|\leq 1$), and for this reason {\it let us set $D\triangleq \{|x|\leq 2\}$ for the rest of the section.}

Now, for fixed elements $\mathbf{\Psi}=(\<Psi>,\<Psi2>,\<IPsi2>,\<PsiIPsi2>)\in \mathcal{E}^{\al,p}_{T,D}$ (with suitable $\al,p$) and $\phi_0,\phi_1$ in appropriate Sobolev spaces, our objective here is to settle a (local) fixed-point argument for the mild equation
\begin{align}
&w=\partial_t (G_t\ast \phi_0)+G_t\ast \phi_1\nonumber\\
&\hspace{0.5cm}+ G\ast \big( \rho^2w^2\big)+G \ast \big( \rho^2\big(\<IPsi2>\big)^2\big)+2\, G\ast \big( (\rho w) \cdot \big( \rho \<IPsi2>\big) \big)+2\, G \ast \big( (\rho w) \cdot \big( \rho \<Psi>\big) \big)+2\, G \ast \big( \rho^2\<PsiIPsi2>\big) \ .\label{equa-w-local-deter}
\end{align}

\subsection{Basic preliminary results}

As we mentionned it in the introduction, convolving with the wave kernel is known to entail specific regularization effects, that are generally summed up through the so-called Strichartz inequalities (see \cite{ginibre-velo}). It turns out that the following (much) weaker result will be sufficient for our purpose here:

\begin{proposition}\label{prop:regu-wave}
$(i)$ For all $0\leq T \leq 1$ and $w\in L^1([0,T];\cw^{-\frac12,2}(\R^2))$, it holds that
\begin{equation}\label{basic-strichartz-1}
\cn\big[ G \ast w;L^\infty([0,T];\cw^{\frac12,2}(\R^2))\big] \lesssim \cn[w;L^1([0,T];\cw^{-\frac12,2}(\R^2))] \ .
\end{equation}
$(ii)$ For all $0\leq T \leq 1$ and $(\phi_0,\phi_1) \in \cw^{\frac12,2}(\R^2) \times \cw^{-\frac12,2}(\R^2)$, it holds that
\begin{equation}\label{basic-strichartz-2}
\cn[ \partial_t(G_t \ast \phi_0)+G_t\ast \phi_1 ;L^\infty([0,T];\cw^{\frac12,2}(\R^2))]\lesssim \| \phi_0\|_{\cw^{\frac12,2}(\R^2)}+\| \phi_1\|_{\cw^{-\frac12,2}(\R^2)} \ .
\end{equation}
\end{proposition}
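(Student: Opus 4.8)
The plan is to reduce both estimates to an elementary pointwise bound on the wave multiplier on the Fourier side, and then invoke Plancherel together with Minkowski's integral inequality; no genuine oscillatory-integral (Strichartz) input is needed. The single analytic fact I would establish is that, for $t\in[0,1]$ and $\xi\in\R^2$, writing $\langle\xi\rangle=(1+|\xi|^2)^{1/2}$,
\[
\langle\xi\rangle\,\Big|\tfrac{\sin(t|\xi|)}{|\xi|}\Big|\ \lesssim\ 1
\qquad\text{and}\qquad
\big|\cos(t|\xi|)\big|\ \le\ 1 \ ,
\]
which amounts to saying that convolution with $G_t$ gains exactly one derivative in $L^2$-based spaces, uniformly in $t$. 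The second bound is trivial, and for the first I would split into $|\xi|\le 1$, where $\big|\tfrac{\sin(t|\xi|)}{|\xi|}\big|\le t\le 1$ while $\langle\xi\rangle\lesssim 1$, and $|\xi|\ge 1$, where $\langle\xi\rangle\lesssim|\xi|$ while $|\sin(t|\xi|)|\le 1$. This is the only place the hypothesis $T\le 1$ enters (it controls the multiplier near $\xi=0$).

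For part $(ii)$, I would use that $\fouri_x\big(\partial_t(G_t\ast\phi_0)\big)(\xi)=\cos(t|\xi|)\widehat{\phi_0}(\xi)$ and $\fouri_x(G_t\ast\phi_1)(\xi)=\tfrac{\sin(t|\xi|)}{|\xi|}\widehat{\phi_1}(\xi)$; then Plancherel together with the two bounds above gives, for each fixed $t\in[0,T]$,
\[
\big\|\partial_t(G_t\ast\phi_0)\big\|_{\cw^{\frac12,2}(\R^2)}
=\big\|\langle\xi\rangle^{\frac12}\cos(t|\xi|)\widehat{\phi_0}\big\|_{L^2}
\le\|\phi_0\|_{\cw^{\frac12,2}(\R^2)} \ ,
\]
\[
\big\|G_t\ast\phi_1\big\|_{\cw^{\frac12,2}(\R^2)}
=\big\|\big(\langle\xi\rangle\tfrac{\sin(t|\xi|)}{|\xi|}\big)\,\langle\xi\rangle^{-\frac12}\widehat{\phi_1}\big\|_{L^2}
\lesssim\|\phi_1\|_{\cw^{-\frac12,2}(\R^2)} \ ,
\]
and taking the supremum over $t\in[0,T]$ yields (\ref{basic-strichartz-2}). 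For part $(i)$, recalling that in the mild formulation $G\ast w$ denotes the Duhamel term $(G\ast w)(t,\cdot)=\int_0^t G_{t-s}\ast_x w(s,\cdot)\,ds$, the same computation (with $t$ replaced by $t-s\in[0,T]\subseteq[0,1]$) gives $\|G_{t-s}\ast_x w(s)\|_{\cw^{\frac12,2}(\R^2)}\lesssim\|w(s)\|_{\cw^{-\frac12,2}(\R^2)}$ uniformly in $0\le s\le t\le T$. Minkowski's integral inequality then gives
\[
\|(G\ast w)(t)\|_{\cw^{\frac12,2}(\R^2)}
\le\int_0^t\|G_{t-s}\ast_x w(s)\|_{\cw^{\frac12,2}(\R^2)}\,ds
\lesssim\int_0^T\|w(s)\|_{\cw^{-\frac12,2}(\R^2)}\,ds \ ,
\]
and the supremum over $t\in[0,T]$ gives (\ref{basic-strichartz-1}).

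I do not expect any real obstacle here: the statement is deliberately far weaker than a true Strichartz inequality — it is essentially a dimension-free energy estimate — and the argument is self-contained modulo Plancherel and Minkowski. The only points that require a little care are the behaviour of the multiplier $\tfrac{\sin(t|\xi|)}{|\xi|}$ near $\xi=0$, which is exactly why $T\le 1$ is imposed, and the correct bookkeeping of the one-derivative gap between $\cw^{-\frac12,2}$ and $\cw^{\frac12,2}$, encoded in the bound $\langle\xi\rangle\,|\fouri_x(G_t)(\xi)|\lesssim 1$.
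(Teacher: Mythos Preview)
Your proposal is correct and follows essentially the same route as the paper: both arguments rest on Plancherel together with the elementary multiplier bound $\langle\xi\rangle\,\big|\tfrac{\sin(t|\xi|)}{|\xi|}\big|\lesssim 1$ (equivalently $\tfrac{\sin^2((t-s)|\xi|)}{|\xi|^2}\lesssim\{1+|\xi|^2\}^{-1}$) and the identity $\fouri_x(\partial_t(G_t\ast\phi_0))(\xi)=\cos(t|\xi|)\widehat{\phi_0}(\xi)$. If anything, your explicit invocation of Minkowski's integral inequality for the Duhamel term is slightly cleaner than the paper's compressed first inequality in part $(i)$.
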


\begin{proof}
Both (\ref{basic-strichartz-1}) and (\ref{basic-strichartz-2}) rely on elementary estimates only.

\smallskip

\noindent
$(i)$ Observe first that
$$\fouri_x \big( \big[ G\ast w\big](t,.)\big)(\xi)=\int_0^t ds \, \big( \fouri_x G_{t-s}\big)(\xi) \big( \fouri_x w_s\big) (\xi) \ ,$$
so that, for every $t\in [0,T]$,
\begin{eqnarray*}
\int_{\R^2} d\xi \, \{1+|\xi|^2\}^{\frac12} \big|\fouri_x\big( \big[ G\ast w\big](t,.)\big)(\xi)\big|^2 & \leq & \int_0^t ds \int_{\R^2} d\xi \, \{1+|\xi|^2\}^{\frac12} \frac{\sin^2((t-s)|\xi|)}{|\xi|^2} | ( \fouri_x w_s) (\xi)|^2\\
&\lesssim& \int_0^t ds \int_{\R^2} d\xi \, \{1+|\xi|^2\}^{-\frac12}  | (\fouri_x w_s)(\xi)|^2  \ ,
\end{eqnarray*}
hence the conclusion.

\smallskip

\noindent
$(ii)$ The bound for $\cn[ G_t\ast \phi_1 ;L^\infty([0,T];\cw^{\frac12,2}(\R^2))]$ follows from similar arguments as above. Then, along the same idea, observe that
$$\fouri_x\big( \partial_t(G_. \ast \phi_0)(t,.)\big)(\xi)=\partial_t ( \fouri_x G)_t(\xi) (\fouri_x \phi_0)(\xi)=\cos(t|\xi|) (\fouri_x \phi_0)(\xi) \ ,$$
and so, for every $t\in [0,T]$,
$$
\int_{\R^2} d\xi \, \{1+|\xi|^2\}^{-\frac12} \big|\fouri_x\big( \partial_t(G_. \ast \phi_0)(t,.)\big)(\xi)\big|^2  \leq  \int_{\R^2} d\xi \, \{1+|\xi|^2\}^{-\frac12} |(\fouri_x \phi_0)(\xi)|^2 \ .
$$

\end{proof}

Then, in order to control the product operations involved in (\ref{equa-w-local-deter}), we will appeal to the following standard properties, the proof of which can for instance be found in \cite[Chapter 4]{runst-sickel}:

\begin{proposition}\label{prop:product}
$(i)$ For all $\al,\be >0$ and $0<p,p_1,p_2\leq \infty$ such that
$$\frac{1}{p}\leq \frac{1}{p_1}+\frac{1}{p_2} \quad , \quad 0<\al<\be<\frac{2}{p_2} \quad , \quad \min\Big(\frac{2}{p}+\al,2\Big)>\Big( \frac{2}{p_1}+\al \Big)+\Big( \frac{2}{p_2}-\be \Big) \ ,$$
one has
$$
\| f\cdot g\|_{\cw^{-\al,p}(\R^2)} \lesssim \|f\|_{\cw^{-\al,p_1}(\R^2)} \| g\|_{\cw^{\be,p_2}(\R^2)} \ .
$$
$(ii)$ For all $\be >\frac{1}{2}$, $0<\al<\be$ and $p>2$, one has
$$
\| f\cdot g\|_{\cw^{-\al,p}(\R^2)} \lesssim \|f\|_{\cw^{-\al,p}(\R^2)} \| g\|_{\cw^{\be,2}(\R^2)} \ .
$$
$(iii)$ For all $\al>0$ and $0<p,p_1,p_2\leq \infty$ such that
$$0<\al<\min\bigg(\frac{2}{p_1},\frac{2}{p_2}\bigg) \quad \text{and} \quad  \frac{1}{p_1}+\frac{1}{p_2}=\frac{1}{p}<\al+1 \ ,$$ 
one has
$$
\| f\cdot g\|_{\cw^{\al,p}(\R^2)} \lesssim \|f\|_{\cw^{\al,p_1}(\R^2)} \| g\|_{\cw^{\al,p_2}(\R^2)} \ .
$$
\end{proposition}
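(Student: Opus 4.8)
\textbf{Proof plan for Proposition \ref{prop:product}.}

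The plan is to reduce everything to the Littlewood--Paley/paraproduct calculus on $\R^2$, which is the standard engine behind such product estimates (as found, e.g., in \cite[Chapter 4]{runst-sickel}). First I would decompose the product $f\cdot g$ via the Bony paraproduct formula $f\cdot g = \pi_<(f,g)+\pi_\circ(f,g)+\pi_>(f,g)$, where $\pi_<(f,g)=\sum_j S_{j-1}f\,\Delta_j g$ carries frequencies of size $2^j$ coming from $g$, $\pi_>$ is the symmetric term, and $\pi_\circ$ is the resonant (diagonal) part. The key heuristic, which I would make precise through Bernstein's inequality, is that $\pi_<(f,g)$ has the regularity of $g$ shifted by the ``negative part'' of the regularity of $f$, that $\pi_>$ behaves symmetrically, and that $\pi_\circ(f,g)$ lives in $\cw^{s+t}$ as soon as $s+t>0$ where $s,t$ are the regularities of $f,g$. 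Throughout, Sobolev spaces $\cw^{s,p}(\R^2)$ are identified (for $1<p<\infty$) with the Triebel--Lizorkin spaces $F^s_{p,2}$, so the Littlewood--Paley square-function characterization is available; for the endpoint exponents $p\in\{1,\infty\}$ and $p\le 1$ one works directly with the Besov-type norms, which is why the hypotheses are stated with strict inequalities between the scaling exponents $2/p_i \pm (\text{regularity})$.

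For part $(i)$, with $f\in\cw^{-\al,p_1}$, $g\in\cw^{\be,p_2}$ and target space $\cw^{-\al,p}$, the term $\pi_<(f,g)$ (low frequency of $f$, high of $g$) is controlled in $\cw^{-\al,p}$ using Hölder with exponents $p_1,p_2$ and summing the geometric series, which converges precisely because $-\al<\be$ so the ``$g$-regularity minus $f$-co-regularity'' is genuine; the term $\pi_>(f,g)$ (low frequency of $g$, high of $f$) needs $S_{j-1}g$ to be bounded, which is where $\be<2/p_2$ enters (sub-critical Sobolev embedding of $g$ into $L^\infty$-type control), and it inherits the $\cw^{-\al,p}$ regularity of $f$; the resonant term $\pi_\circ(f,g)$ requires the exponent $-\al+\be>0$ plus the scaling condition $\min(2/p+\al,2)>(2/p_1+\al)+(2/p_2-\be)$, which is exactly the condition guaranteeing that after applying Bernstein to pass from $(p_1,p_2)$ to $p$ the resulting series in $j$ is summable and lands in a space of positive order $-\al$ (hence the $\min(\cdot,2)$, reflecting that one cannot gain more than the top regularity of the product of two $\cw^{\cdot,\infty}$-type factors on $\R^2$). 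Part $(ii)$ is the special case $p_1=p$, $p_2=2$, $\al=\al$, $\be=\be$: here $\be>1/2$ gives $\cw^{\be,2}(\R^2)\hookrightarrow L^\infty\cap \cw^{\be,\infty}$-type control of $g$ (since $\be>2/2-1=0$ is not enough, but $\be>1/2$ handles the $p>2$ gap cleanly via Sobolev embedding $\cw^{\be,2}\hookrightarrow \cw^{\be-\ep,q}$ for suitable $q$), so $g$ acts as a bounded multiplier on $\cw^{-\al,p}$, and one simply tracks the three paraproduct pieces as before; alternatively $(ii)$ can be deduced directly from $(i)$ by choosing $p_1=p$, $p_2$ large and $\be$ slightly below the given one, then using the embedding $\cw^{\be,2}\hookrightarrow \cw^{\be',p_2}$. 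Part $(iii)$ is the classical Sobolev-algebra-type statement: with both factors in $\cw^{\al,p_i}$ of \emph{positive} order $\al$, all three paraproduct terms are handled uniformly — $\pi_<$ and $\pi_>$ trivially (positive regularity is only helped by a low-frequency projection), and $\pi_\circ$ needs $\al+\al>0$ plus $\al<\min(2/p_1,2/p_2)$ for the Bernstein step and $1/p<\al+1$ to keep the summation in $\ell^p$ convergent.

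The main obstacle — or rather the only real content, since these are textbook estimates — is bookkeeping the endpoint exponents: the hypotheses allow $p,p_1,p_2$ down to values $\le 1$ and up to $\infty$, so one cannot blithely invoke the $F^s_{p,2}=\cw^{s,p}$ identification or the boundedness of the Littlewood--Paley projections on $L^p$, and must instead phrase the resonant-term estimate through the embedding $\ell^1\hookrightarrow\ell^p$ (for the $j$-summation when $p\le 1$) and through $B^s_{p,1}\hookrightarrow F^s_{p,2}\hookrightarrow B^s_{p,\infty}$ sandwiches. Since the statement is quoted verbatim from \cite[Chapter 4]{runst-sickel}, I would in fact simply cite it and, if a self-contained argument is wanted, relegate the paraproduct bookkeeping to an appendix; for the purposes of this paper the black-box form above is all that is used downstream (in Proposition \ref{prop:regu-wave}'s companions and in the fixed-point argument of Section \ref{sec:determ-eq}).
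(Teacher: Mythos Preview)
The paper does not prove this proposition at all: it is stated as a black box with the single sentence ``the proof of which can for instance be found in \cite[Chapter 4]{runst-sickel}''. Your proposal therefore already matches the paper's approach the moment you say you would cite that reference; the paraproduct sketch you add on top is extra content the paper does not attempt.

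That sketch is broadly correct in spirit --- Bony decomposition plus Bernstein is indeed the machinery behind these estimates --- but a couple of your attributions are slightly off. For instance, the condition $\be<2/p_2$ in $(i)$ is \emph{not} there to give $L^\infty$-type control of $g$ (sub-critical means precisely that $g\notin L^\infty$ in general); rather, it is a scaling constraint that keeps the resonant/high-high term summable after the Bernstein change of integrability exponent. Similarly, in $(ii)$ the condition $\be>\frac12$ does not by itself embed $\cw^{\be,2}(\R^2)$ into $L^\infty$ (that would require $\be>1$ in dimension $2$); the role of $\be>\frac12$ combined with $p>2$ is more delicate and again shows up in the scaling balance for the diagonal piece. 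None of this affects the validity of the result, since you correctly defer to \cite{runst-sickel} for the actual bookkeeping, but if you do write out the appendix version you should re-examine which hypothesis feeds which paraproduct term.
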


\subsection{Solving the deterministic equation}\label{subsec:solv-equa}

We are now in a position to exhibit suitable bounds for (\ref{equa-w-local-deter}).
 
\begin{proposition}\label{prop:deterministic-result}
Fix $\al \in (\frac14,\frac12)$ and for all $T>0$, $\phi=(\phi_0,\phi_1) \in \cw^{\frac12,2}(\R^2) \times \cw^{-\frac12,2}(\R^2)$ and $\mathbf{\Psi}=\big(\<Psi>,\<Psi2>,\<IPsi2>,\<PsiIPsi2>\big)\in \ce^{\al,4}_{T,D}$, consider the map $\gga_{T,\phi,\mathbf{\Psi}}:L^\infty_T\cw^{\frac12,2}\to L^\infty_T\cw^{\frac12,2}$ given by 
\begin{align*}
&\gga_{T,\phi,\mathbf{\Psi}}(w)\triangleq \partial_t (G_t\ast \phi_0)+G_t\ast \phi_1\\
&\hspace{1.5cm}+G\ast \big( \rho^2w^2\big)+G \ast \big( \rho^2\big(\<IPsi2>\big)^2\big)+2\, G\ast \big( (\rho w) \cdot \big( \rho \<IPsi2>\big) \big)+2\, G \ast \big( (\rho w) \cdot \big( \rho \<Psi>\big) \big)+2\, G \ast \big( \rho^2\<PsiIPsi2>\big) \ .
\end{align*}
Then, for all $T>0$, $\phi=(\phi_0,\phi_1) \in \cw^{\frac12,2}(\R^2) \times \cw^{-\frac12,2}(\R^2)$, $\mathbf{\Psi}_1,\mathbf{\Psi}_2\in \mathcal{E}^{\al,4}_{T,D}$ and $w,w_1,w_2 \in L^\infty_T\cw^{\frac12,2}$, the following bounds hold true:
\begin{align}
&\cn\big[\Gamma_{T,\phi,\mathbf{\Psi}_1}(w);L^\infty_T\cw^{\frac12,2}\big] \lesssim \| \phi_0\|_{\cw^{\frac12,2}}+\| \phi_1\|_{\cw^{-\frac12,2}}\nonumber\\
&\hspace{3cm}+T\Big\{ \cn\big[w;L^\infty_T\cw^{\frac12,2}\big]^2+\|\mathbf{\Psi}_1\|^2+   \|\mathbf{\Psi}_1\|\, \cn\big[w;L^\infty_T\cw^{\frac12,2}\big]+\|\mathbf{\Psi}_1\|\Big\} \ ,\label{boun-gamma-1}
\end{align}
and
\begin{align}
&\cn\big[\Gamma_{T,\phi,\mathbf{\Psi}_1}(w_1)-\Gamma_{T,\phi,\mathbf{\Psi}_2}(w_2);L^\infty_T\cw^{\frac12,2}\big] \nonumber\\
&\lesssim T\Big[ \cn\big[w_1-w_2;L^\infty_T\cw^{\frac12,2}\big]\big\{\cn\big[w_1;L^\infty_T\cw^{\frac12,2}\big]+\cn\big[w_2;L^\infty_T\cw^{\frac12,2}\big]\big\} +   \|\mathbf{\Psi}_1-\mathbf{\Psi}_2\|\, \cn\big[w_1;L^\infty_T\cw^{\frac12,2}\big]\nonumber\\
& \hspace{6cm}+  \|\mathbf{\Psi}_2\|\, \cn\big[w_1-w_2;L^\infty_T\cw^{\frac12,2}\big]+T \|\mathbf{\Psi}_1-\mathbf{\Psi}_2\|\Big] \ , \label{boun-gamma-2}
\end{align}
where the proportional constants only depend on $s$ and the norm $\|.\|$ is naturally defined as
$$
\|\mathbf{\Psi}\|=\|\mathbf{\Psi}\|_{\mathcal{E}^{\al,4}_{T,D}}\triangleq \cn\big[\<Psi>;L^{\infty}_T\cw^{-\al,4}_D\big]+\cn\big[\<Psi2>;L^{\infty}_T\cw^{-2\al,4}_D\big]+\cn\big[\<IPsi2>;L^{\infty}_T\cw^{1-2\al,4}_D\big]+\cn\big[\<PsiIPsi2>;L^{\infty}_T\cw^{-\al,4}_D\big] \ .
$$
\end{proposition}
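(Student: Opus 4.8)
The plan is to estimate, one by one, the $L^\infty_T\cw^{\frac12,2}$-norm of each of the seven summands defining $\gga_{T,\phi,\mathbf{\Psi}}(w)$, and then to add up the resulting bounds. For the two data terms $\partial_t(G_t\ast\phi_0)+G_t\ast\phi_1$ I would just invoke Proposition~\ref{prop:regu-wave}$(ii)$, which accounts for the contribution $\|\phi_0\|_{\cw^{\frac12,2}}+\|\phi_1\|_{\cw^{-\frac12,2}}$ and for nothing else. Each of the five remaining terms has the shape $G\ast h$ with $h(t,\cdot)$ a product supported in $D=\{|x|\le 2\}$ (thanks to the cut-off $\rho$), so by Proposition~\ref{prop:regu-wave}$(i)$ it suffices to bound $\cn[h;L^1_T\cw^{-\frac12,2}]$. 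I would produce, for every fixed $t\in[0,T]$, an estimate of $\|h(t,\cdot)\|_{\cw^{-\frac12,2}(\R^2)}$ that is uniform in $t$, and then integrate over $[0,T]$; this is precisely what gives rise to the factor $T$ standing in front of all the nonlinear terms in~(\ref{boun-gamma-1})--(\ref{boun-gamma-2}). In what follows I abbreviate $\cn[w]:=\cn[w;L^\infty_T\cw^{\frac12,2}]$.

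The core of the proof is thus a short list of product estimates in $\cw^{-\frac12,2}$, all to be read off from Proposition~\ref{prop:product} together with two soft facts used repeatedly: multiplication by the fixed smooth compactly supported $\rho$ (or $\rho^2$) maps a localized Sobolev space $\cw^{s,p}_D$ boundedly into $\cw^{s,p}(\R^2)$, with range made of functions supported in $D$; and a distribution supported in the interior of $D$ and lying in such a space belongs to $\cw^{-\frac12,2}(\R^2)$ as soon as the relevant Sobolev relation permits it, because on the bounded set $D$ one may lower the integrability exponent for free. With these conventions: $\rho^2w^2$ is handled via $\cw^{\frac12,2}\hookrightarrow L^4$, which gives $\|\rho^2w^2\|_{\cw^{-\frac12,2}}\lesssim\|w\|_{\cw^{\frac12,2}}^2$; the term $\rho^2(\<IPsi2>)^2$ is handled by Proposition~\ref{prop:product}$(iii)$ applied to $\<IPsi2>\cdot\<IPsi2>$, which is licit exactly because $0<1-2\al<\frac12$ --- this is where the assumption $\al>\frac14$ is used --- yielding $\|\rho^2(\<IPsi2>)^2\|_{\cw^{-\frac12,2}}\lesssim\|\<IPsi2>\|_{\cw^{1-2\al,4}_D}^2$; the term $(\rho w)\cdot(\rho\<IPsi2>)$ is handled by H\"older, combining $\cw^{\frac12,2}\hookrightarrow L^4$ with $\cw^{1-2\al,4}\hookrightarrow L^{r}$, $\frac1r=\al-\frac14$, so that the product lies in $L^{r'}$ with $\frac1{r'}=\al<\frac12$, hence, being compactly supported, in $L^2\hookrightarrow\cw^{-\frac12,2}$; and $\rho^2\<PsiIPsi2>$ is dealt with by the plain chain $\cw^{-\al,4}_D\hookrightarrow\cw^{-\al,2}_D\hookrightarrow\cw^{-\frac12,2}$. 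These four estimates feed, respectively, the terms $T\cn[w]^2$, $T\|\mathbf{\Psi}_1\|^2$, $T\|\mathbf{\Psi}_1\|\cn[w]$ and $T\|\mathbf{\Psi}_1\|$ of~(\ref{boun-gamma-1}).

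The delicate product --- and the step I expect to be the main obstacle --- is $(\rho w)\cdot(\rho\<Psi>)$, a genuinely negative-regularity product, $w$ sitting only at the critical exponent $\frac12$ and $\<Psi>$ only at $-\al$. A direct use of Proposition~\ref{prop:product}$(i)$ with target $\cw^{-\al,2}$ fails, its third hypothesis degenerating into the non-strict inequality $1+\al>1+\al$. The remedy is to trade a little integrability for margin: I would apply Proposition~\ref{prop:product}$(i)$ with the negative factor $\rho\<Psi>\in\cw^{-\al,4}$, the positive factor $\rho w\in\cw^{\frac12,2}$ (i.e. $\be=\frac12$, $p_2=2$) and target space $\cw^{-\al,q}$ for some $q\in\big[\tfrac{4}{3-2\al},2\big)$ --- an interval that is non-empty exactly when $\al<\frac12$, which is precisely why this is the limiting range --- so that all three hypotheses are now satisfied and $\|(\rho w)(\rho\<Psi>)\|_{\cw^{-\al,q}}\lesssim\|\rho\<Psi>\|_{\cw^{-\al,4}}\|\rho w\|_{\cw^{\frac12,2}}$; the choice of $q$ then makes the Sobolev embedding $\cw^{-\al,q}(\R^2)\hookrightarrow\cw^{-\frac12,2}(\R^2)$ valid, producing the last missing term $T\|\mathbf{\Psi}_1\|\cn[w]$ of~(\ref{boun-gamma-1}).

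For the Lipschitz-type estimate~(\ref{boun-gamma-2}) I would simply rerun the same six estimates after a telescoping decomposition of each quadratic expression --- $w_1^2-w_2^2=(w_1-w_2)(w_1+w_2)$, $(\<IPsi2>_1)^2-(\<IPsi2>_2)^2=(\<IPsi2>_1-\<IPsi2>_2)(\<IPsi2>_1+\<IPsi2>_2)$, $(\rho w_1)(\rho\<Psi>_1)-(\rho w_2)(\rho\<Psi>_2)=(\rho(w_1-w_2))(\rho\<Psi>_1)+(\rho w_2)(\rho(\<Psi>_1-\<Psi>_2))$, and similarly for the $\<IPsi2>$- and $\<PsiIPsi2>$-terms. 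The bilinearity of all the product estimates above then distributes the difference onto one factor at a time and reproduces, summand by summand, the right-hand side of~(\ref{boun-gamma-2}), the restriction $T\le 1$ being used freely to compare the various powers of $T$. Apart from the product $(\rho w)\cdot(\rho\<Psi>)$ discussed above, the only other point requiring care is the constant bookkeeping between the global spaces on which Proposition~\ref{prop:product} is stated and the localized spaces in which $\mathbf{\Psi}$ is given, which is handled once and for all by the multiplication/restriction remarks.
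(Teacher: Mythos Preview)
Your proposal is correct and follows essentially the same route as the paper's own proof: term-by-term estimates combining Proposition~\ref{prop:regu-wave} with the product rules of Proposition~\ref{prop:product} and Sobolev embeddings, the key $(\rho w)\cdot(\rho\<Psi>)$ term being handled identically (the paper's specific exponent $\tilde r_2$ with $\frac{1}{\tilde r_2}=\frac34-\frac{\al}{2}$ is exactly the lower endpoint $\frac{4}{3-2\al}$ of your admissible range for $q$). The only cosmetic differences lie in the easier terms, where the paper routes $\rho^2 w^2$ and $\rho^2(\<IPsi2>)^2$ through $L^{8/3}_D\cdot L^{8/3}_D\hookrightarrow L^{4/3}_D\hookrightarrow\cw^{-\frac12,2}_D$ and keeps $(\rho w)\cdot(\rho\<IPsi2>)$ in positive regularity via Proposition~\ref{prop:product}$(iii)$ rather than descending to Lebesgue spaces as you do.
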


\

\begin{proof}
As expected, the two bounds (\ref{boun-gamma-1}) and (\ref{boun-gamma-2}) will follow from the combination of the estimates in Proposition \ref{prop:regu-wave} and Proposition \ref{prop:product}. The elementary Sobolev embedding
\begin{equation}\label{sobol-emb}
\cw^{s_0,p_0}(D) \subset \cw^{s_1,p_1}(D) \quad \text{if} \ s_0-\frac{2}{p_0} \geq s_1-\frac{2}{p_1} 
\end{equation}
will also be requested.

\

\noindent
\textbf{Initial conditions:} the bound for $\cn[ \partial_t(G_t \ast \phi_0)+G_t\ast \phi_1 ;L^\infty_T\cw^{\frac12,2}]$ immediately follows from Proposition \ref{prop:regu-wave}, item $(ii)$.

\

\noindent
\textbf{Bound on $G\ast (\rho^2 w^2)$:} Using successively (\ref{basic-strichartz-1}) and (\ref{sobol-emb}), we deduce that
$$
\cn\big[ G\ast (\rho^2 w^2);L^\infty_T\cw^{\frac12,2}\big]  \lesssim   \cn\big[\rho^2 w^2;L^1_T\cw^{-\frac12,2}_D\big] \lesssim \cn\big[\rho^2 w^2;L^1_TL^{\frac43}_D\big] \lesssim \cn\big[w;L^2_TL^{\frac83}_D\big]^2\lesssim T\cn\big[w;L^\infty_TL^{\frac83}_D\big]^2 \ ,
$$
and we get the expected bound through the embedding $\cw_D^{\frac12,2}\subset L_D^{\frac83}$.

\

\noindent
\textbf{Bound on $G\ast \big(\rho^2 \big( \<IPsi2>\big)^2\big)$:} Just as above, we have
$$\cn\big[ G\ast \big(\rho^2 \big( \<IPsi2>\big)^2\big);L^\infty_T\cw^{\frac12,2}\big] \lesssim T\cn\big[\<IPsi2>;L^\infty_TL^{\frac83}_D\big]^2 \ ,$$
and the desired bound is here obtained through the embedding $\cw^{1-2\al,4}_D\subset L_D^{\frac83}$. 

\smallskip

\noindent
\textbf{Bound on $G\ast \big( (\rho w) \cdot \big( \rho \<IPsi2>\big) \big)$:} 
Let $1<\rti_1<2$ be defined by the relation
$$\frac{1}{\rti_1}=\frac54-\al\ .$$
Using successively (\ref{basic-strichartz-1}) and (\ref{sobol-emb}), we get that
$$ \cn\big[ G\ast \big( (\rho w) \cdot \big( \rho \<IPsi2>\big) \big);L^\infty_T\cw^{\frac12,2}\big] \lesssim \cn\big[(\rho w) \cdot\big( \rho \<IPsi2>\big);L^{1}_T\cw^{-\frac12,2}_D\big]\lesssim T\cn\big[(\rho w) \cdot\big( \rho \<IPsi2>\big);L^{\infty}_T\cw^{1-2\al,\rti_1}_D\big] \ .$$
Then introduce the additional parameter $2\leq p_1\leq 4$ defined by
$$\frac{1}{p_1}=\frac{1}{\rti_1}-\frac12 =\frac34-\al\ .$$
By Proposition \ref{prop:product} (item $(iii)$), we know that for each fixed $t\geq 0$, 
\begin{eqnarray*}
\big\|(\rho w)_t \cdot\big( \rho \<IPsi2>\big)_t \big\|_{\cw^{1-2\al,\rti_1}(\R^2)}&\lesssim & \big\|(\rho w)_t  \big\|_{\cw^{1-2\al,2}(\R^2)} \big\|\big( \rho \<IPsi2>\big)_t \big\|_{\cw^{1-2\al,p_1}(\R^2)}\\
&\lesssim& \big\|w_t  \big\|_{\cw^{\frac12,2}_D} \big\|\<IPsi2>_t \big\|_{\cw^{1-2\al,4}_D} \ ,
\end{eqnarray*}
which immediately yields
$$
\cn\big[ G\ast \big( (\rho w) \cdot \big( \rho \<IPsi2>\big) \big);L^\infty_T\cw^{\frac12,2}\big]\lesssim T \cn\big[ w;L^{\infty}_T\cw^{\frac12,2}\big] \cn\big[\<IPsi2>;L^{\infty}_T\cw^{1-2\al,4}_D\big]\lesssim T \cn\big[w;L^{\infty}_T\cw^{\frac12,2}\big] \|\mathbf{\Psi}_1\| \ .
$$

\

\noindent
\textbf{Bound on $G\ast \big( (\rho w) \cdot \big( \rho \<Psi>\big) \big)$:}
Let $1<\rti_2<2$ be defined by the relation
$$\frac{1}{\rti_2}=\frac34-\frac{\al}{2} \ .$$
Using successively (\ref{basic-strichartz-1}) and (\ref{sobol-emb}), we get that
$$ \cn\big[ G\ast \big( (\rho w) \cdot \big( \rho \<Psi>\big) \big);L^\infty_T\cw^{\frac12,2}\big] \lesssim \cn\big[(\rho w) \cdot\big( \rho \<Psi>\big);L^{1}_T\cw^{-\frac12,2}_D\big]\lesssim T\cn\big[(\rho w) \cdot\big( \rho \<Psi>\big);L^{\infty}_T\cw^{-\al,\rti_2}_D\big] \ .$$
Then, using Proposition \ref{prop:product} (item $(i)$ and $(ii)$), we can assert that for each fixed $t\geq 0$, 
$$\big\|(\rho w)_t \cdot\big( \rho \<Psi>\big)_t \big\|_{\cw^{-\al,\rti_2}_D}\lesssim \big\|(\rho w)_t  \big\|_{\cw^{\frac12,2}_D} \big\|\big( \rho \<Psi>\big)_t \big\|_{\cw^{-\al,4}_D}\lesssim \big\| w_t  \big\|_{\cw^{\frac12,2}_D} \big\|\<Psi>_t \big\|_{\cw^{-\al,4}_D}$$
and accordingly
$$
\cn\big[ G\ast \big( (\rho w) \cdot \big( \rho \<Psi>\big) \big);L^\infty_T\cw^{\frac12,2}\big] \lesssim T \cn\big[ w;L^{\infty}_T\cw^{\frac12,2}\big] \|\mathbf{\Psi}_1\| \ .
$$

\

\noindent
\textbf{Bound on $G \ast \big( \rho^2\<PsiIPsi2>\big)$:} 
By (\ref{basic-strichartz-1}), 
$$ \cn\big[ G \ast \big( \rho^2\<PsiIPsi2>\big);L^\infty_T\cw^{\frac12,2}\big] \lesssim \cn\big[\rho^2\<PsiIPsi2>; L^1_T\cw_D^{-\frac12,2}\big]\lesssim \cn\big[\rho^2\<PsiIPsi2> ;L^\infty_T\cw_D^{-\al,4}\big]\lesssim T \|\mathbf{\Psi}_1\| \ ,$$
which concludes the proof of (\ref{boun-gamma-1}).

\

We can then show (\ref{boun-gamma-2}) along similar arguments. 
\end{proof}

\subsection{Proof of the main results}\label{subsec:proof-main}

\begin{proof}[Proof of Theorem \ref{main-theo}]
The combination of (\ref{boun-gamma-1}) and (\ref{boun-gamma-2}) easily allows us to assert that for $T_0>0$ small enough and for all $\mathbf{\Psi}\in \mathcal{E}^{\al,4}_{T_0,D}$ (with $\al\in (\frac14,\frac12)$), the map $\gga_{T_0,\phi,\mathbf{\Psi}}$ is a contraction on some stable subset of $L^\infty_{T_0}\cw^{\frac12,2}_D$, which yields a unique solution $w$ for equation (\ref{equa-w-local-deter}). Then it suffices of course to apply this result (in an almost sure way) to the stochastic process $\mathbf{\Psi}=\big( \<Psi>,\<Psi2>,\<IPsi2>,\<PsiIPsi2>\big)\in \mathcal{E}^{\al,4}_{T_0,D}$ given by Proposition \ref{prop:stoch-constr}, where $\al$ is picked such that $\frac14\leq \frac32-(H_0+H_1+H_2)<\al <\frac12$, and set $u\triangleq \<Psi>+\<IPsi2>+w$.
\end{proof}

\begin{proof}[Proof of Theorem \ref{main-theo-lim}]
For fixed $n\geq 1$, let $u^n$ be the solution of (\ref{approx-equation}) and set $w^n \triangleq u^n-\<Psi>^n-\<IPsi2>^n$, where $\<Psi>^n$ and $\<IPsi2>^n$ are defined in (\ref{approx-rp}). Following the lines of Section \ref{subsec:heuri}, we can explicitly verify that $w^n$ is the solution of (\ref{equa-w-local-deter}) associated with the process $(\mathbf{\Psi}^n)_{n\geq 1} \triangleq (\<Psi>^n,\<Psi2>^n,\<IPsi2>^n,\<PsiIPsi2>^n)$. Observe indeed that the following simplification occurs: setting $v^n \triangleq u^n-\<Psi>^n$, we have
\begin{eqnarray*}
v^n&=&\partial_t (G_t\ast \phi_0)+G_t\ast \phi_1+G \ast\big[ \rho^2 \big(v^n+\<Psi>^n\big)^2-\si^n+(1-\rho^2) \big( \<Psi>^n\big)^2 \big] \\
&=&\partial_t (G_t\ast \phi_0)+G_t\ast \phi_1+G \ast\big[ \rho^2 (v^n)^2+2 \rho^2 v^n \<Psi>^n \big]+G\ast \<Psi2>^n \ ,
\end{eqnarray*}
and from there it is readily checked that $w^n (=v^n-G\ast \<Psi2>^n)$ satisfies the expected equation (\ref{equa-w-local-deter}). Then, based on (\ref{boun-gamma-1})-(\ref{boun-gamma-2}), the convergence of (a subsequence of) $w^n$ in $L^\infty_{T_0}\cw^{\frac12,2}$ (for some $T_0>0$) can be shown with the very same arguments as those of the proof of \cite[Theorem 1.7]{deya-wave}, and the convergence of $u^n$ in $L^\infty_{T_0}\cw^{-\al,2}_D$ immediately follows. 

\smallskip

For the asymptotic estimate of $\si^n$, let us slightly anticipate the notations of Section \ref{sec:stoch}: in particular, using the forthcoming formula (\ref{cova-gene-bis}), we get that  
\begin{eqnarray*}
\si^n(t) \ = \ \mathbb{E}\big[ \big(\<Psi>^n_t(x)\big)^2\big]&=&c \int_{|\xi|\leq 2^n}  \frac{d\xi}{|\xi|^{2H_0-1}}\int_{|\eta|\leq 2^n}  \frac{d\eta_1 d\eta_2}{|\eta_1|^{2H_1-1}|\eta_2|^{2H_2-1}} \, |\ga_t(\xi,|\eta|)|^2 \\
&=&c \int_0^{2^n} \frac{dr}{r^{2(H_1+H_2)-3}}\int_{|\xi|\leq 2^n}  \frac{d\xi}{|\xi|^{2H_0-1}} \, |\ga_{t}(\xi,r)|^2 \ .
\end{eqnarray*}
Assertion (\ref{estim-cstt}) is now a straightforward consequence of \cite[Proposition 2.4]{deya-wave}.
\end{proof}

\section{Stochastic constructions}\label{sec:stoch}

Let us now turn to the main technical part of our analysis, namely the proofs of Propositions \ref{prop:stoch-constr} and \ref{prop:limit-case}, which includes in particular the construction of the central path $\mathbf{\Psi}=\big( \<Psi>,\<Psi2>,\<IPsi2>,\<PsiIPsi2>\big)$ above the fractional noise. To this end, our arguments will occasionally appeal to some of the technical results of \cite{deya-wave}. However, recall that, in comparison with the setting of \cite{deya-wave}, we are dealing with a rougher situation here and third-order processes must also come into the picture, so that new (sophisticated) estimates shall be required. 

\smallskip

Let us start with the introduction of a few convenient notations (related to the wave kernel and the fractional noise) that we will extensively use in the sequel. First, we set, for all $H=(H_0,H_1,H_2)\in (0,1)^3$, $\xi\in \R$, $\eta \in \R^2$ and $\rho,t\geq 0$,
\begin{equation}
\ga_t(\xi,\rho)\triangleq e^{\imath \xi t}\int_0^t ds \, e^{-\imath \xi s} \frac{\sin(s\rho)}{\rho} \quad , \quad \ga_{s,t}(\xi,\rho)\triangleq \ga_t(\xi,\rho)-\ga_s(\xi,\rho) \ ,
\end{equation}
\begin{equation}\label{n-h}
K^H(\eta)\triangleq \frac{|\eta_1|^{1-2H_1} |\eta_2|^{1-2H_2}}{1+|\eta|^{1+2H_0}} \ .
\end{equation}
For all $\tau\in \big\{\<Psi>,\<Psi2>,\<IPsi2>,\<PsiIPsi2> \big\}$, $1\leq n \leq m$ and  $0\leq s,t\leq 1$, let us also set $\tau^{n,m}_t \triangleq \tau^m_t-\tau^n_t$, and then, for $f\in \{\tau^{n},\tau^{m},\tau^{n,m}\}$, $f_{s,t} \triangleq f_t-f_s$.

\smallskip

With this notation in mind, the following \enquote{covariance} identity clearly holds true: for all $a=(a_1,a_2)$, resp. $b=(b_1,b_2)$, with $a_i\in \{n,m,\{n,m\}\}$, resp. $b_i\in \{s,t,\{s,t\}\}$, and all $y,\yti\in \R^2$, 
\begin{equation}\label{cova-gene}
\mathbb{E}\big[  \<Psi>^{a_1}_{b_1}(y)\overline{\<Psi>^{a_2}_{b_2}(\tilde{y})}\big]=c_H\int_{\R^2} d\eta \, e^{\imath \langle \eta,y\rangle}e^{-\imath \langle \eta,\yti\rangle} L^{H,a}_b(\eta) \ ,
\end{equation}
where 
$$L^{H,a}_b(\eta) \triangleq \frac{1}{|\eta_1|^{2H_1-1} |\eta_2|^{2H_2-1}} \int_{(\xi,\eta)\in \cd^{a_1} \cap \cd^{a_2}} d\xi \, \frac{\ga_{b_1}(\xi,|\eta|) \overline{\ga_{b_2}(\xi,|\eta|)}}{|\xi|^{2H_0-1}}  $$
with
$$\cd^n\triangleq \{|\xi|\leq 2^n \, , \, |\eta|\leq 2^n\} \quad , \quad  \cd^m\triangleq \{|\xi|\leq 2^m \, , \, |\eta|\leq 2^m\} \quad \text{and} \quad \cd^{n,m}\triangleq \cd^m \backslash \cd^n  \ .$$
In the same way, it holds that
\begin{equation}\label{cova-gene-bis}
\mathbb{E}\big[  \<Psi>^{a_1}_{b_1}(y)\<Psi>^{a_2}_{b_2}(\tilde{y})\big]=c_H\int_{\R^2} d\eta \, e^{\imath \langle \eta,y\rangle}e^{\imath \langle \eta,\yti\rangle} \tilde{L}^{H,a}_b(\eta)
\end{equation}
with
$$\tilde{L}^{H,a}_b(\eta) \triangleq \frac{1}{|\eta_1|^{2H_1-1} |\eta_2|^{2H_2-1}} \int_{(\xi,\eta)\in \cd^{a_1} \cap \cd^{a_2}} d\xi \, \frac{\ga_{b_1}(\xi,|\eta|) \ga_{b_2}(\xi,|\eta|)}{|\xi|^{2H_0-1}} \ .$$

\

Our estimates toward Proposition \ref{prop:stoch-constr} and Proposition \ref{prop:limit-case} will heavily rely on the following bounds for $L^{H,a}_b$:
\begin{lemma}\label{lem:bou-l-h}
For all $H=(H_0,H_1,H_2)\in (0,1)^3$, $\varepsilon\in (0,\min(H_0,\frac14))$, $0\leq n\leq m$, $0\leq s,t,u \leq 1$ and $\eta\in \R^2$, it holds that
\begin{equation}\label{bou-l-1}
\max \big( \big| L^{H,(m,m)}_{t,t}(\eta) \big|,\big| \tilde{L}^{H,(m,m)}_{t,t}(\eta) \big|\big) \lesssim K^{H_{0,\varepsilon}}(\eta) 
\end{equation}
and 
\begin{equation}\label{bou-l-2}
\max\big( \big| L^{H,((n,m),m)}_{(s,t),t}(\eta) \big|,\big| \tilde{L}^{H,((n,m),m)}_{(s,t),t}(\eta) \big|\big) \lesssim 2^{-n\varepsilon}|t-s|^{\varepsilon} \big\{ K^{H_{\varepsilon,0}}(\eta)+K^{H_{\varepsilon,1}}(\eta)+K^{H_{\varepsilon,2}}(\eta) \big\} \ ,
\end{equation}
where  $H_{\varepsilon,0} \triangleq (H_0-\varepsilon,H_1,H_2)$, $H_{\varepsilon,1} \triangleq (H_0,H_1-\varepsilon,H_2)$, $H_{\varepsilon,2} \triangleq (H_0,H_1,H_2-\varepsilon)$, and the proportional constants do no depend on $(n,m)$, $(s,t)$ and $\eta$.
\end{lemma}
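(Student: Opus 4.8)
The plan is to reduce both bounds to elementary estimates on the oscillatory integral $\gamma_t(\xi,\rho)$ and then integrate against the measure $d\xi/|\xi|^{2H_0-1}$. Recall that
$$\gamma_t(\xi,\rho)=e^{\imath\xi t}\int_0^t e^{-\imath\xi s}\frac{\sin(s\rho)}{\rho}\,ds,$$
which can be evaluated explicitly as a combination of $\frac{1}{\rho}\cdot\frac{e^{\imath(\pm\rho-\xi)t}-1}{\pm\rho-\xi}$. From this closed form one reads off the two fundamental bounds that drive everything: a \emph{uniform} bound of the type $|\gamma_t(\xi,\rho)|\lesssim \min\big(1,\frac{1}{|\xi|},\frac{1}{\rho},\frac{1}{||\xi|-\rho|}\big)$ (these are exactly the bounds used in \cite{deya-wave}; they are obtained by integration by parts and by the triangle inequality $|\sin(s\rho)/\rho|\le s$), and a \emph{H\"older-in-time} bound $|\gamma_{s,t}(\xi,\rho)|\lesssim |t-s|^{\varepsilon}\cdot(\text{one of the above uniform bounds})^{1-\varepsilon}$, which follows by interpolating the trivial estimate $|\gamma_{s,t}(\xi,\rho)|\lesssim |t-s|$ (immediate from $|\sin(s\rho)/\rho|\le s$ applied to the difference $\int_s^t$) against the uniform bound. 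I would state these two sub-estimates as a preliminary claim, with a one-line proof, since they are the only analytic input.

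For the first bound (\ref{bou-l-1}): here $a=(m,m)$ and $b=(t,t)$, so the domain of integration is $\{|\xi|\le 2^m,\ |\eta|\le 2^m\}$ and the integrand is $|\gamma_t(\xi,|\eta|)|^2/|\xi|^{2H_0-1}$. I split the $\xi$-integral into the region $|\xi|\lesssim |\eta|$ and $|\xi|\gtrsim |\eta|$ (or, more carefully, near the "light cone" $|\xi|\approx|\eta|$). Using $|\gamma_t|\lesssim 1/|\eta|$ on the bulk, the contribution is $\lesssim |\eta|^{-2}\int_{|\xi|\lesssim|\eta|}|\xi|^{1-2H_0}d\xi\lesssim |\eta|^{-2H_0}$ when $H_0<1$; for the region $|\xi|\gtrsim|\eta|$ one uses $|\gamma_t|\lesssim 1/|\xi|$, giving $\lesssim \int_{|\xi|\gtrsim|\eta|}|\xi|^{-1-2H_0}d\xi\lesssim |\eta|^{-2H_0}$; and near the cone one absorbs the singularity $||\xi|-|\eta||^{-2}$ by sacrificing an $\varepsilon$ of regularity, i.e.\ bounding $|\gamma_t|^2\lesssim ||\xi|-|\eta||^{-2\varepsilon}|\eta|^{-2(1-\varepsilon)}$, which is integrable in $\xi$ near $|\xi|=|\eta|$ and produces the extra factor $|\eta|^{-\varepsilon}$ — this is precisely why $\varepsilon$ appears and why the right-hand side is $K^{H_{0,\varepsilon}}(\eta)=|\eta_1|^{1-2H_1}|\eta_2|^{1-2H_2}/(1+|\eta|^{1+2(H_0-\varepsilon)})$ rather than $K^H(\eta)$. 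Recombining with the fixed prefactor $|\eta_1|^{1-2H_1}|\eta_2|^{1-2H_2}$ from the definition of $L^{H,a}_b$, and treating the small-$|\eta|$ regime (where the $1+$ in the denominator of $K$ makes everything bounded) separately, yields (\ref{bou-l-1}). The estimate on $\tilde L$ is identical since only the modulus of the $\gamma$'s enters.

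For the second bound (\ref{bou-l-2}): now $a=((n,m),m)$, so the $(\xi,\eta)$-integration is over $\mathcal D^{n,m}\cap\mathcal D^m=\mathcal D^{n,m}$, i.e.\ the dyadic shell where $\max(|\xi|,|\eta|)\in(2^n,2^m]$, and $b=((s,t),t)$, so the integrand is $\gamma_{s,t}(\xi,|\eta|)\overline{\gamma_t(\xi,|\eta|)}/|\xi|^{2H_0-1}$. I use the H\"older-in-time bound $|\gamma_{s,t}|\lesssim |t-s|^{\varepsilon}\cdot(\text{uniform})^{1-\varepsilon}$ on the first factor and a uniform bound on the second; the factor $|t-s|^{\varepsilon}$ comes straight out. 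For the gain $2^{-n\varepsilon}$: on the shell $\mathcal D^{n,m}$ we have $\max(|\xi|,|\eta|)\gtrsim 2^n$, so in each of the three regions (bulk $|\xi|\lesssim|\eta|$, bulk $|\xi|\gtrsim|\eta|$, near-cone) one of the quantities $|\xi|,|\eta|,||\xi|-|\eta||$ that sits in the denominator is $\gtrsim 2^n$ up to the cone issue; extracting a factor $2^{-n\varepsilon}$ from that denominator (again by interpolation, i.e.\ replacing one power of e.g.\ $|\xi|^{-1}$ by $|\xi|^{-(1-\varepsilon)}2^{-n\varepsilon}$ — legitimate since $|\xi|\gtrsim 2^n$ there) leaves a convergent integral whose value is $\lesssim K^{H_{\varepsilon,i}}(\eta)$ for $i=0,1,2$ depending on which variable the $\varepsilon$ was taken from — and this is exactly the three-term sum on the right-hand side. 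The only genuine subtlety, hence \textbf{the main obstacle}, is the bookkeeping near the light cone $|\xi|\approx|\eta|$: there neither $|\xi|$ nor $|\eta|$ need be large individually when $n$ is small, the singularity $||\xi|-|\eta||^{-1}$ from $\gamma$ is borderline non-integrable, and one must carefully choose how to split the available powers of $|\gamma|$ between the "gain an $\varepsilon$-off-the-endpoint Sobolev exponent" mechanism (which feeds $K^{H_{\varepsilon,i}}$) and the "gain $2^{-n\varepsilon}$" mechanism (which feeds the decay in $n$) so that both come out simultaneously and the remaining $\xi$-integral still converges; a case distinction according to whether $|\eta|\lesssim 2^n$ or $|\eta|\gtrsim 2^n$ (in the latter case one can afford to extract the dyadic gain directly from $|\eta|$) makes this transparent, at the cost of some uninspiring but routine estimates.
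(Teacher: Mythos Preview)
Your approach is correct and its analytic content is the same as the paper's, but the organisation is different. The paper's proof is three lines: it invokes \cite[Corollary 2.2]{deya-wave}, which packages the entire $\xi$-integral as the ready-made estimate
\[
\int_{\R} \frac{|\gamma_{s,t}(\xi,\rho)|\,|\gamma_u(\xi,\rho)|}{|\xi|^{2H_0-1}}\,d\xi \ \lesssim\ \frac{|t-s|^{\varepsilon}}{1+\rho^{1+2(H_0-\varepsilon)}}\,,
\]
and then asserts that both \eqref{bou-l-1} and \eqref{bou-l-2} follow. Your pointwise bounds on $\gamma_t$, the H\"older-in-time interpolation, and the bulk/near-cone splitting are exactly what goes into the proof of that cited corollary; you are re-deriving it from scratch. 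This buys you self-containedness at the price of length, and you correctly flag the near-cone region as the only non-routine step.

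One small imprecision worth fixing: in the definition of $L^{H,a}_b(\eta)$ the variable $\eta$ is \emph{fixed} and only $\xi$ is integrated, subject to the constraint $(\xi,\eta)\in\mathcal D^{a_1}\cap\mathcal D^{a_2}$. So for \eqref{bou-l-2} the case split is cleaner than you suggest: if $|\eta|>2^n$ the $\xi$-domain is all of $\{|\xi|\le 2^m\}$ and the factor $2^{-n\varepsilon}$ is extracted from the $\eta$-side (from $|\eta|$, $|\eta_1|$ or $|\eta_2|$, which accounts for the three terms $K^{H_{\varepsilon,i}}$); if $|\eta|\le 2^n$ the $\xi$-domain is $\{2^n<|\xi|\le 2^m\}$ and the gain comes from $|\xi|^{1-2H_0}$. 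With the black-box $\xi$-integral estimate in hand this bookkeeping is indeed immediate, which is why the paper does not spell it out; your final paragraph arrives at the same split, so nothing is missing.
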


\begin{proof}
Thanks to \cite[Corollary 2.2]{deya-wave}, we can assert that, under the assumptions of the lemma, and for all  $\rho\geq 0$, one has
\begin{equation}\label{estim-gga}
\int_{\R} d\xi \, \frac{|\ga_{s,t}(\xi,\rho)| |\ga_{u}(\xi,\rho)| }{|\xi|^{2H_0-1}} \lesssim \frac{|t-s|^\varepsilon}{1+\rho^{1+2(H_0-\varepsilon)}} \ ,
\end{equation}
where the proportional constant only depends on $H_0$ and $\varepsilon$. Both estimates (\ref{bou-l-1}) and (\ref{bou-l-2}) immediately follow.
\end{proof}

\

\subsection{Proof of Proposition \ref{prop:stoch-constr}} Following the arguments of \cite[Proposition 1.4]{deya-wave} (which are based on the hypercontractivity property of Gaussian chaos and the classical Garsia-Rodemich-Rumsey lemma), the proof consists in showing that for all $\tau\in \big\{\<Psi>,\<Psi2>,\<IPsi2>,\<PsiIPsi2> \big\}$, $0\leq s<t\leq T$, $1\leq n\leq m$ and $x\in \R^2$, one has
\begin{equation}\label{mom-gene}
\mathbb{E} \bigg[ \Big| \cf^{-1}\Big( \{1+|.|^2\}^{\frac{|\tau|}{2}} \cf \big( \tau_{s,t}^{n,m} \big) \Big)(x)\Big|^2 \bigg] \leq c 2^{-n\varepsilon}|t-s|^\varepsilon \ ,
\end{equation}
for some $\varepsilon >0$, some constant $c>0$ that does not depend on $x$, $n$, $m$, $s$, $t$, and where the \enquote{order} $|\tau|$ of $\tau$ is naturally defined as $|\<Psi>|\triangleq -\al$, $|\<Psi2>|\triangleq -2\al$, $|\<IPsi2>|\triangleq 1-2\al$ and $|\<PsiIPsi2>|\triangleq -\al$.

\subsubsection{Convergence of the first component}
It actually corresponds to the result of \cite[Proposition 1.2]{deya-wave}. Let us only recall that the convergence is here a straightforward consequence of the elementary property
\begin{equation}\label{tech-estim-first-order}
\int_{\R^2} \frac{d\eta}{\{1+|\eta|^2\}^{\al}}  K^H(\eta) \ < \ \infty \ ,
\end{equation}
valid for all $H=(H_0,H_1,H_2)\in (0,1)^3$ and $\al > \frac32-(H_0+H_1+H_2)$. 

\

\subsubsection{Convergence of the second component}\label{subsec:sec-comp}


In this situation, let us first expand the left-hand side of (\ref{mom-gene}) (with $\tau=\<Psi2>$) as
\begin{equation}\label{exp-ordre-deux}
\iint_{(\R^2)^2} \frac{d\la d\lati}{\{1+|\la|^2\}^{\al} \{1+|\lati|^2\}^{\al}}  \,  e^{\imath \langle x,\la-\lati \rangle} \iint_{(\R^2)^2} dy d\yti \, e^{-\imath \langle \la,y \rangle}e^{\imath \langle \lati,\yti\rangle} \mathbb{E}\big[  \<Psi2>^{n,m}_{s,t}(y)\overline{\<Psi2>^{n,m}_{s,t}(\tilde{y})}\big] \ .
\end{equation}
Then, using Wick formula, it is easy to check that the quantity 
$$\mathbb{E}\big[  \<Psi2>^{n,m}_{s,t}(y)\overline{\<Psi2>^{n,m}_{s,t}(\tilde{y})}\big]$$
can be expanded as a sum of terms of the form
\begin{equation}\label{exp-wick}
c_{a,b}\, \mathbb{E}\big[  \<Psi>^{a_1}_{b_1}(y)\overline{\<Psi>^{a_2}_{b_2}(\tilde{y})}\big] \mathbb{E}\big[  \<Psi>^{a_3}_{b_3}(y)\overline{\<Psi>^{a_4}_{b_4}(\tilde{y})}\big]
\end{equation}
where $a_i\in \{n,m,\{n,m\}\}$, $b_i\in \{s,t,\{s,t\}\}$, and one has both $\{a_1,\ldots,a_4\} \cap \{\{n,m\}\}\neq \emptyset$ and $\{b_1,\ldots,b_4\} \cap \{\{s,t\}\}\neq \emptyset$ (in other words, each of the summands contains at least one increment with respect to $(n,m)$ and one increment with respect to $(s,t)$). An element in this set is for instance given by
\begin{equation}\label{example-pr}
\mathbb{E}\big[  \<Psi>^{n,m}_{s,t}(y)\overline{\<Psi>^{m}_{t}(\tilde{y})}\big] \mathbb{E}\big[  \<Psi>^{m}_{t}(y)\overline{\<Psi>^{m}_{t}(\tilde{y})}\big] \ .
\end{equation}
By formula (\ref{cova-gene}), this element can be expanded as
\begin{align*}
&\mathbb{E}\big[  \<Psi>^{n,m}_{s,t}(y)\overline{\<Psi>^{m}_{t}(\tilde{y})}\big] \mathbb{E}\big[  \<Psi>^{m}_{t}(y)\overline{\<Psi>^{m}_{t}(\tilde{y})}\big]\\
&=\iint d\eta d\etati \,  e^{\imath \langle \eta,y\rangle}e^{-\imath \langle \eta,\yti\rangle}  e^{\imath \langle \etati,y\rangle}e^{-\imath \langle \etati,\yti\rangle} L^{H,((n,m),m)}_{(s,t),t}(\eta)L^{H,(m,m)}_{t,t}(\etati) \ ,
\end{align*}
and so, going back to (\ref{exp-ordre-deux}), we get that
\begin{align*}
&\iint_{(\R^2)^2} \frac{d\la d\lati}{\{1+|\la|^2\}^{\al} \{1+|\lati|^2\}^{\al}}  \,  e^{\imath \langle x,\la-\lati \rangle} \iint_{(\R^2)^2} dy d\yti \, e^{-\imath \langle \la,y \rangle}e^{\imath \langle \lati,\yti\rangle} \mathbb{E}\big[  \<Psi>^{n,m}_{s,t}(y)\overline{\<Psi>^{m}_{t}(\tilde{y})}\big] \mathbb{E}\big[  \<Psi>^{m}_{t}(y)\overline{\<Psi>^{m}_{t}(\tilde{y})}\big]\\
&\hspace{2cm}=\iint_{(\R^2)^2}  \frac{d\eta d\etati}{\{1+|\eta-\etati|^2\}^{2\al}}  L^{H,((n,m),m)}_{(s,t),t}(\eta)L^{H,(m,m)}_{t,t}(\etati) \ .
\end{align*}
Now we can use Lemma \ref{lem:bou-l-h} to derive that, for $\varepsilon>0$ small enough,
\begin{align*}
&\bigg| \iint_{(\R^2)^2}\frac{d\la d\lati}{\{1+|\la|^2\}^{\al} \{1+|\lati|^2\}^{\al}}  \,  e^{\imath \langle x,\la-\lati \rangle} \iint_{(\R^2)^2} dy d\yti \, e^{-\imath \langle \la,y \rangle}e^{\imath \langle \lati,\yti\rangle} \mathbb{E}\big[  \<Psi>^{n,m}_{s,t}(y)\overline{\<Psi>^{m}_{t}(\tilde{y})}\big] \mathbb{E}\big[  \<Psi>^{m}_{t}(y)\overline{\<Psi>^{m}_{t}(\tilde{y})}\big] \bigg|\\
&\hspace{2cm}\lesssim 2^{-n\varepsilon} |t-s|^\varepsilon\iint_{(\R^2)^2} \frac{d\eta d\etati}{\{1+|\eta-\etati|^2\}^{2\al}} K^{H_{0,\varepsilon}}(\etati) \big\{ K^{H_{\varepsilon,0}}(\eta)+K^{H_{\varepsilon,1}}(\eta)+K^{H_{\varepsilon,2}}(\eta) \big\} \ .
\end{align*}
At this point, it should be clear to the reader that the above arguments could actually be applied to any element of the form (\ref{exp-wick}), allowing us to assert that for any $\varepsilon>0$ small enough, 
\begin{align}
&\mathbb{E} \bigg[ \Big| \cf^{-1}\Big( \{1+|.|^2\}^{-\al} \cf \big( \<Psi2>_{s,t}^{n,m} \big) \Big)(x)\Big|^2 \bigg]\lesssim 2^{-n\varepsilon} |t-s|^\varepsilon\nonumber\\
&\hspace{0.5cm}\iint_{(\R^2)^2} \frac{d\eta d\etati}{\{1+|\eta-\etati|^2\}^{2\al}}  \big\{ K^{H_{\varepsilon,0}}(\eta)+K^{H_{\varepsilon,1}}(\eta)+K^{H_{\varepsilon,2}}(\eta) \big\} \big\{ K^{H_{\varepsilon,0}}(\etati)+K^{H_{\varepsilon,1}}(\etati)+K^{H_{\varepsilon,2}}(\etati) \big\}\ .\label{conclu-ordre-deux}
\end{align}
The conclusion is then an immediate consequence of the following technical result:

\

\begin{lemma}\label{lem:tech-ordre-deux}
For all $H=(H_0,H_1,H_2), \Hti=(\Hti_0,\Hti_1,\Hti_2) \in (0,1)^3$ satisfying
\begin{equation}\label{constraint-h-i-lem}
0<H_1, \Hti_1<\frac34 \quad , \quad 0<H_2,\Hti_2< \frac34 \quad , \quad 1 < H_0+H_1+H_2 \leq  \frac54 \quad ,\quad 1 < \Hti_0+\Hti_1+\Hti_2 \leq  \frac54  \ ,
\end{equation} 
and any
\begin{equation}\label{tech-cond-al}
\al\in \big(\max\big(\frac32-(H_0+H_1+H_2),\frac32-(\Hti_0+\Hti_1+\Hti_2)\big),\frac12\big) \ ,
\end{equation}
it holds that
$$
\iint_{(\R^2)^2} \frac{d\eta d\etati}{\{1+|\eta-\etati|^2\}^{2\al}} K^H(\eta) K^{\Hti}(\etati) \ < \ \infty\ .
$$
\end{lemma}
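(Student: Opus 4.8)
The plan is to treat the claimed bound as a pure power‑counting estimate. Denote by $\ci$ the integral in the statement. First I would replace $K^H$ by a cleaner expression: since $1+|\eta|^{1+2H_0}\asymp\langle\eta\rangle^{1+2H_0}$ with $\langle\eta\rangle\triangleq(1+|\eta|^2)^{1/2}$, one has $K^H(\eta)\asymp|\eta_1|^{1-2H_1}|\eta_2|^{1-2H_2}\langle\eta\rangle^{-(1+2H_0)}$, and likewise for $K^{\Hti}$. Under \eqref{constraint-h-i-lem} the exponents satisfy $1-2H_i,\,1-2\Hti_i\in(-\tfrac12,1)\subset(-1,\infty)$ for $i=1,2$, so the only possible singularities of $K^H$ and $K^{\Hti}$, which lie on the coordinate axes, are locally integrable on $\R^2$; together with $\{1+|\eta-\etati|^2\}^{-2\al}\le 1$ this shows that the contribution to $\ci$ of any bounded region of $(\R^2)^2$ is finite, so it suffices to estimate the part with $|\eta|+|\etati|\ge 1$ (which I would further split into a \emph{bulk} region where $|\eta_1|,|\eta_2|,|\etati_1|,|\etati_2|\gtrsim1$ and finitely many \emph{axis} regions where some of these quantities stay bounded, so that in each piece the integrand behaves like a product of powers).

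Next I would record the elementary growth bound
\[
\int_{|\zeta|\le R}K^{\Hti}(\zeta)\,d\zeta\ \lesssim\ 1+R^{\,3-2(\Hti_0+\Hti_1+\Hti_2)}\qquad(R\ge 1),
\]
obtained through a dyadic decomposition of the annuli $|\zeta|\sim 2^{\ell}$ and the rescaling $\zeta=2^{\ell}\zeta'$, each annular integral being finite because $1-2\Hti_i>-1$. Under \eqref{constraint-h-i-lem} the exponent $3-2(\Hti_0+\Hti_1+\Hti_2)$ lies in $(\tfrac12,1)$, so this bound genuinely diverges as $R\to\infty$: $K^{\Hti}$ is in general \emph{not} in $L^1(\R^2)$, which is precisely why the kernel $\{1+|\eta-\etati|^2\}^{-2\al}$ cannot be discarded.

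Then, $\ci$ being symmetric under $(H,\eta)\leftrightarrow(\Hti,\etati)$, it suffices to bound the part of $\ci$ with $|\etati|\le 2|\eta|$, which I would split according to whether $|\eta-\etati|>\tfrac14|\eta|$ (the \emph{off‑diagonal} part) or $|\eta-\etati|\le\tfrac14|\eta|$ (the \emph{diagonal} part, on which moreover $|\etati|\sim|\eta|$). On the off‑diagonal part $\langle\eta-\etati\rangle\gtrsim\langle\eta\rangle$, so the $\etati$‑integration is absorbed by the growth bound above and finiteness reduces to $\int K^H(\eta)\langle\eta\rangle^{-4\al}(1+\langle\eta\rangle^{\,3-2(\Hti_0+\Hti_1+\Hti_2)})\,d\eta<\infty$; a dyadic decomposition of this integral yields the two requirements $3-2(H_0+H_1+H_2)-4\al<0$ and $6-2(H_0+H_1+H_2)-2(\Hti_0+\Hti_1+\Hti_2)-4\al<0$, the first a consequence of $\al>\tfrac32-(H_0+H_1+H_2)$ and $H_0+H_1+H_2\le\tfrac54$, the second obtained by adding the two lower bounds on $\al$ in \eqref{tech-cond-al}. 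On the diagonal part I would substitute $\etati=\eta-u$ with $|u|\le\tfrac14|\eta|$ and use the factor $\langle u\rangle^{-4\al}$ to perform the $u$‑integration first — decomposing $|u|\sim 2^{\ell}$ with $0\le\ell\lesssim\log|\eta|$ and integrating the translated axis singularities $|\eta_i-u_i|^{1-2\Hti_i}$ via $1-2\Hti_i>-1$ — which leaves an $\eta$‑integral whose convergence again reduces to finitely many linear inequalities in $H_i,\Hti_i,\al$, all implied by \eqref{constraint-h-i-lem}--\eqref{tech-cond-al}. Two representative ones are $2(H_0+H_2)+2(\Hti_0+\Hti_1+\Hti_2)+4\al>4$, which appears when $\eta$ stays near the $\eta_2$‑axis and holds because $\al>\tfrac32-(\Hti_0+\Hti_1+\Hti_2)$, $\Hti_0+\Hti_1+\Hti_2\le\tfrac54$ and $H_0+H_2>1-H_1>\tfrac14$; and $2(H_0+H_2)+2(\Hti_0+\Hti_2)>1$, which appears when $\eta$ and $\etati$ both run to infinity along the $\eta_2$‑axis with $|\eta-\etati|$ bounded and holds because $H_0+H_2,\Hti_0+\Hti_2>\tfrac14$ — the $\eta_1$‑axis and the $(\Hti,\etati)$‑variants being identical by symmetry.

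The main obstacle, and the reason a naive radial power count does not suffice, is the \emph{anisotropy} of $K^H$: along the coordinate directions it decays only like $|\eta_1|^{-2(H_0+H_1)}$, resp.\ $|\eta_2|^{-2(H_0+H_2)}$, with exponents that need not exceed $1$, so $K^H$ may fail to be integrable and its directional contributions must be tracked separately and balanced against \emph{both} the convolution kernel and the second factor $K^{\Hti}$ — the delicate case being the diagonal region, where $\eta$ and $\etati$ may escape to infinity together along a common axis. Keeping this bookkeeping consistent is where the bulk of the (elementary but lengthy) computation lies; it is exactly in order to suppress these axis contributions that the bounds $H_1,H_2,\Hti_1,\Hti_2<\tfrac34$ are needed, on top of the ``scaling'' conditions $H_0+H_1+H_2,\,\Hti_0+\Hti_1+\Hti_2>1$ and $\al>\tfrac32-(H_0+H_1+H_2)$.
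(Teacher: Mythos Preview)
Your approach is correct in outline and takes a genuinely different route from the paper. The paper proceeds via a \emph{coordinate-wise} reduction: by exploiting symmetry, sign patterns and the dichotomy ``$|\eta_i-\etati_i|\gtrsim\max(|\eta_i|,|\etati_i|)$ versus $\eta_i\sim\etati_i$'' in each coordinate $i\in\{1,2\}$ separately, it reduces the integral to four explicit model integrals $\cj_1,\ldots,\cj_4$, each of which is then computed by concrete changes of variables (of the type $\etati_i=\eta_i(1+r_i)$) together with a Cauchy--Schwarz decoupling in $\cj_2$. Your argument instead splits \emph{radially} into an off-diagonal region $|\eta-\etati|\gtrsim|\eta|$ (handled by the growth bound on $\int_{|\zeta|\le R}K^{\Hti}$) and a diagonal region $|\eta-\etati|\ll|\eta|$ (handled by integrating in $u=\eta-\etati$), with a further bulk/axis decomposition overlaid. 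What each buys: the paper's coordinate split is fully explicit and leaves nothing to the reader; your dyadic power-counting scheme is more conceptual and generalises more transparently to higher-order convolutions, but --- as you rightly concede --- the anisotropic bookkeeping in the diagonal region is where the labour sits. Your write-up is thinnest precisely there: you display two ``representative'' inequalities but do not enumerate the remaining axis configurations (for instance $|\eta_1|\lesssim 1$, $|\etati_1|\sim 2^\ell$ at an intermediate dyadic scale $1\ll 2^\ell\ll|\eta|$, where both the $\langle u\rangle^{-4\al}$ gain and the local integrability $1-2\Hti_1>-1$ must be combined). The checks all go through, but a complete proof should list them.
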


\begin{proof}

\

\noindent
\textit{Step 0: Simplification of the problem.} Let us show that the problem actually reduces to the consideration of the four following integrals:
$$\cj_1\triangleq \int_{\R^2} d\eta \int_{\R^2} d\etati\, \frac{1}{\{1+|\eta|^2\}^{\al}} \frac{1}{\{1+|\etati|^2\}^{\al}}K^H(\eta) K^{\Hti}(\etati) \ ,$$
$$
\cj_2 \triangleq \int_0^\infty d\eta_1 \int_{0}^{\infty} d\etati_1\int_0^\infty d\eta_2\int_{\eta_2}^{2\eta_2} d\etati_2\, \frac{1}{\{1+\eta_1^2+(\eta_2-\etati_2)^2\}^{\al}}\frac{1}{\{1+\etati_1^2+(\eta_2-\etati_2)^2\}^{\al}} K^H(\eta) K^{\Hti}(\etati) \ ,
$$
$$
\cj_3 \triangleq \int_0^\infty d\eta_1 \int_{\eta_1}^{2\eta_1} d\etati_1\int_0^\infty d\eta_2\int_{\eta_2}^{2\eta_2} d\etati_2\frac{1}{\{1+|\eta-\etati|^2\}^{2\al}} K^H(\eta) K^{\Hti}(\etati) \ .
$$
and
$$
\cj_4 \triangleq \int_0^\infty d\eta_1 \int_{\eta_1}^{2\eta_1} d\etati_1\int_0^\infty d\etati_2\int_{\etati_2}^{2\etati_2} d\eta_2\frac{1}{\{1+|\eta-\etati|^2\}^{2\al}} K^H(\eta) K^{\Hti}(\etati) \ .
$$

\smallskip

First, observe that for obvious symmetric and sign reasons, we can focus on the integration over the two domains $\cd_1 \triangleq \{\eta_1< 0<\etati_1 \, , \, (\eta_2,\etati_2)\in \R^2\}$ and $\cd_2 \triangleq \{0<\eta_1< \etati_1 \, , \, (\eta_2,\etati_2)\in \R^2\}$. 

\smallskip

As far as $\cd_1$ is concerned, let us decompose the domain as $\cd_1=\cd_1^1 \cup \cd_1^2$, with $\cd_1^1\triangleq \{\eta_1<0<\etati_1 \, , \, n_2\etati_2 <0\}$ and $\cd_1^1\triangleq \{\eta_1<0<\etati_1 \, , \, \eta_2\etati_2 \geq 0\}$. 
 For $(\eta,\etati)\in \cd_1^1$, one has $|\eta_i-\etati_i|^2 \geq \max\big( |\eta_i|^2,|\etati_i|^2\big)$ for $i\in \{1,2\}$, and so the integral over $\cd_1^1$ is bounded by $\cj_1$. For $(\eta,\etati)\in \cd_1^2$, one has again $|\eta_1-\etati_1|^2 \geq \max\big( |\eta_1|^2,|\etati_1|^2\big)$, as well as one of the following four situations: $|\eta_2| \geq 2 |\etati_2|$, $|\etati_2| \geq 2 |\eta_2|$, $|\eta_2| \leq |\etati_2| \leq 2|\eta_2|$ or $|\etati_2|\leq |\eta_2| \leq 2|\etati_2|$. In the first two cases, one has $|\eta_2-\etati_2|^2 \geq \frac14 \max\big( |\eta_2|^2,|\etati_2|^2\big)$ and so we can again go back to the integral $\cj_1$, while the integration in the third and fourth cases clearly reduces to the consideration of $\cj_2$.

\smallskip

Along the same ideas, decompose $\cd_2$ into $\cd_2 =\cd_2^1\cup \cd_2^2$, with $\cd_2^1\triangleq \{0<\eta_1< \etati_1 <2\eta_1 \, , \, (\eta_2,\etati_2)\in \R^2\}$ and $\cd_2^2\triangleq\{0<2\eta_1< \etati_1 \, , \, (\eta_2,\etati_2)\in \R^2\}$, and observe that if $(\eta,\etati)\in \cd_2^2$, then $|\eta_1-\etati_1|^2 \geq \frac14 \max\big( |\eta_1|^2,|\etati_1|^2)$. We can finally use the same splitting as above for $(\eta_2,\etati_2)$ in order to reduce the problem to the consideration of one of the four integrals $\cj_i$, $i\in \{1,\ldots,4\}$.

\

\noindent
\textit{Step 1: Estimation of $\cj_1$.} The quantity under consideration here can of course be written as
\begin{equation}
\cj_1=\bigg(\int_{\R^2} \frac{d\eta}{\{1+|\eta|^2\}^{\al}}  K^H(\eta) \bigg) \bigg(\int_{\R^2} \frac{d\etati}{\{1+|\etati|^2\}^{\al}}  K^{\Hti}(\etati) \bigg) \ ,\label{estim-order-one}\\
\end{equation}
and we can thus conclude with the first-order statement (\ref{tech-estim-first-order}).

\

\noindent
\textit{Step 2: Estimation of $\cj_2$.} One has here 
\begin{eqnarray}
\lefteqn{\cj_2\ =\int_0^\infty d\eta_1 \int_{0}^{\infty} d\etati_1\int_0^\infty d\eta_2 \, \eta_2\int_{0}^{1} dr\, \frac{1}{\{1+\eta_1^2+\eta_2^2r^2\}^{\al}}\frac{1}{\{1+\etati_1^2+\eta_2^2r^2\}^{\al}}}\nonumber \\
& &\hspace{2cm}\frac{|\eta_1|^{1-2H_1} |\etati_1|^{1-2\Hti_1}}{|\eta_2|^{2(H_2+\Hti_2)-2}(1+r)^{2\Hti_2-1}} \frac{1}{\{ 1+|\eta|^{1+2H_0}\}}\frac{1}{\{ 1+(\etati_1^2+\etati_2^2(1+r)^2)^{\frac12+\Hti_0}\}}\nonumber\\
&\lesssim& \int_0^\infty d\eta_1 \int_{0}^{\infty} d\etati_1\int_0^\infty d\eta_2\, \frac{|\eta_1|^{1-2H_1} |\etati_1|^{1-2\Hti_1}}{|\eta_2|^{2(H_2+\Hti_2)-3}}  \frac{1}{\{ 1+|\eta|^{1+2H_0}\}\{ 1+|\etati|^{1+2\Hti_0}\}}\nonumber \\
& &\hspace{5cm} \int_{0}^{1} dr\, \frac{1}{\{1+\eta_1^2+\eta_2^2r^2\}^{\al}}\frac{1}{\{1+\etati_1^2+\eta_2^2r^2\}^{\al}}\nonumber\\
&\lesssim& \int_0^\infty \frac{d\eta_2}{|\eta_2|^{2(H_2+\Hti_2)-3}} \bigg[ \int_0^\infty \frac{d\eta_1}{|\eta_1|^{2H_1-1} \{1+(\eta_1^2+\eta_2^2)^{H_0+\frac12}\}}\bigg( \int_0^1 \frac{dr}{\{1+\eta_1^2+\eta_2^2r^2\}^{2\al}} \bigg)^{\frac12} \bigg]\nonumber\\
& &\hspace{3cm}\bigg[ \int_0^\infty \frac{d\etati_1}{|\etati_1|^{2\Hti_1-1} \{1+(\etati_1^2+\eta_2^2)^{\Hti_0+\frac12}\}}\bigg( \int_0^1 \frac{dr}{\{1+\etati_1^2+\eta_2^2r^2\}^{2\al}} \bigg)^{\frac12} \bigg]\nonumber\\
&\lesssim& \int_0^\infty \frac{d\eta_2}{|\eta_2|^{2(H_2+\Hti_2)-3}} \bigg( \int_0^\infty \frac{d\eta_1}{|\eta_1|^{4H_1-2} \{1+(\eta_1^2+\eta_2^2)^{2H_0+1}\}}\bigg)^{\frac12}\nonumber\\
& &\hspace{2cm} \bigg( \int_0^\infty \frac{d\etati_1}{|\etati_1|^{4\Hti_1-2} \{1+(\etati_1^2+\eta_2^2)^{2\Hti_0+1}\}}\bigg)^{\frac12}\bigg( \int_0^\infty d\la \int_0^1 \frac{dr}{\{1+\la^2+\eta_2^2 r^2\}^{2\al}} \bigg)\nonumber\\
&\lesssim& \int_0^\infty \frac{d\eta_2}{|\eta_2|^{2(H_2+\Hti_2)-2}} \bigg( \int_0^\infty \frac{d\eta_1}{|\eta_1|^{4H_1-2} \{1+(\eta_1^2+\eta_2^2)^{2H_0+1}\}}\bigg)^{\frac12}\nonumber\\
& & \hspace{2cm}\bigg( \int_0^\infty \frac{d\etati_1}{|\etati_1|^{4\Hti_1-2} \{1+(\etati_1^2+\eta_2^2)^{2\Hti_0+1}\}}\bigg)^{\frac12}\bigg( \int_0^\infty d\la \int_0^{\eta_2} \frac{dr}{\{1+\la^2+ r^2\}^{2\al}} \bigg)\nonumber\\
&\lesssim& \bigg(\int_0^\infty \frac{d\eta_2}{|\eta_2|^{4H_2-2}}  \int_0^\infty \frac{d\eta_1}{|\eta_1|^{4H_1-2} \{1+|\eta|^{4H_0+2}\}}\bigg( \int_0^\infty d\la \int_0^{\eta_2} \frac{dr}{\{1+\la^2+ r^2\}^{2\al}} \bigg)\bigg)^{\frac12}\nonumber\\
& &\hspace{1cm}\bigg(\int_0^\infty \frac{d\etati_2}{|\etati_2|^{4\Hti_2-2}}  \int_0^\infty \frac{d\etati_1}{|\etati_1|^{4\Hti_1-2} \{1+|\etati|^{2\Hti_0+1}\}}\bigg( \int_0^\infty d\la \int_0^{\etati_2} \frac{dr}{\{1+\la^2+ r^2\}^{2\al}} \bigg)\bigg)^{\frac12} \ .\label{ref-lem-tech-i-2}
\end{eqnarray}
At this point, let us pick $\varepsilon >0$ such that $2\al-\frac12-\varepsilon >0$ (noting that $\al>\frac14$ by (\ref{constraint-h-i-lem})) and write
$$
 \int_0^\infty d\la \int_0^{\eta_2} \frac{dr}{\{1+\la^2+ r^2\}^{2\al}} \leq  \int_0^\infty \frac{d\la}{\{1+\la^2\}^{\frac12+\varepsilon}}  \int_0^{\eta_2} \frac{dr}{\{1+r^2\}^{2\al-\frac12-\varepsilon}}\lesssim 1+|\eta_2|^{2-4\al+2\varepsilon} \ ,
$$
so that
\begin{align*}
&\int_0^\infty \frac{d\eta_2}{|\eta_2|^{4H_2-2}}  \int_0^\infty \frac{d\eta_1}{|\eta_1|^{4H_1-2} \{1+|\eta|^{4H_0+2}\}}\bigg( \int_0^\infty d\la \int_0^{\eta_2} \frac{dr}{\{1+\la^2+ r^2\}^{2\al}} \bigg)\\
&\lesssim \int_0^\infty \frac{d\eta_1}{|\eta_1|^{4H_1-2} \{1+|\eta_1|^{4H_0+2}\}} \int_0^1 \frac{d\eta_2}{|\eta_2|^{4H_2-2}}+\int_0^\infty d\eta_1 \int_1^\infty d\eta_2 \, \frac{1}{|\eta_1|^{4H_1-2} |\eta_2|^{4H_2+4\al-4-2\varepsilon} |\eta|^{4H_0+2}} \ .\\
\end{align*}
Using conditions (\ref{constraint-h-i-lem})-(\ref{tech-cond-al}), it is easy to check that the latter integrals are finite for any $\varepsilon>0$ small enough. It is then clear that these arguments also apply to the second term in (\ref{ref-lem-tech-i-2}), which achieves to proof the finiteness of $\cj_2$.

\

\noindent
\textit{Step 3: Estimation of $\cj_3$.} Let us write
\begin{eqnarray*}
\cj_3 &=&\int_0^\infty d\eta_1 \int_{0}^{\infty} d\eta_2 \, \eta_1 \eta_2 \int_0^1 dr_1 \int_0^1 dr_2 \frac{1}{\{1+\eta_1^2 r_1^2+\eta_2^2r_2^2\}^{2\al}}\\
& & \frac{(1+r_1)^{1-2\Hti_1}(1+r_2)^{1-2\Hti_2}}{|\eta_1|^{2(H_1+\Hti_1)-2}|\eta_2|^{2(H_2+\Hti_2)-2}}  \frac{1}{1+|\eta|^{2H_0+1}} \frac{1}{1+(\eta_1^2(1+r_1)^2+\eta_2^2(1+r_2)^2)^{\Hti_0+\frac12}}\\
&\lesssim& \int_0^\infty \int_0^\infty \frac{d\eta_1 d\eta_2}{|\eta_1|^{2(H_1+\Hti_1)-2} |\eta_2|^{2(H_2+\Hti_2)-2}\{1+|\eta|^{2(H_0+\Hti_0)+2}\}} \int_0^{\eta_1} dr_1 \int_0^{\eta_2} dr_2 \, \frac{1}{\{1+r_1^2+r_2^2\}^{2\al}}\\
&\lesssim& \int_{0<\eta_1,\eta_2<1} \frac{d\eta_1 d\eta_2}{|\eta_1|^{2(H_1+\Hti_1)-3} |\eta_2|^{2(H_2+\Hti_2)-3}}\\
& & +\int_{\substack{0<\eta_1,\eta_2<\infty\\ \eta_1 \vee \eta_2 >1}}   \frac{d\eta_1 d\eta_2}{|\eta_1|^{2(H_1+\Hti_1)-2} |\eta_2|^{2(H_2+\Hti_2)-2}\{1+|\eta|^{2(H_0+\Hti_0)+2}\}}\iint_{[0,\eta_1 \vee \eta_2]^2} \frac{dr_1 dr_2}{\{1+r_1^2+r_2^2\}^{2\al}} \ .
\end{eqnarray*}
The first integral is clearly finite. Then, since $\al \in (\frac14,\frac12)$,
$$\iint_{[0,\eta_1 \vee \eta_2]^2} \frac{dr_1 dr_2}{\{1+r_1^2+r_2^2\}^{2\al}} \lesssim \int_0^{2(\eta_1 \vee \eta_2)} d\rho \, \frac{\rho}{\{1+\rho^2\}^{2\al}} \lesssim 1+(\eta_1 \vee \eta_2)^{2-4\al} \ ,$$
and so, using the fact that $H_i,\Hti_i \in (0,\frac34)$ for $i\in \{1,2\}$, we get
\begin{align*}
&\int_{\substack{0<\eta_1,\eta_2<\infty\\ \eta_1 \vee \eta_2 >1}}   \frac{d\eta_1 d\eta_2}{|\eta_1|^{2(H_1+\Hti_1)-2} |\eta_2|^{2(H_2+\Hti_2)-2}\{1+|\eta|^{2(H_0+\Hti_0)+2}\}}\iint_{[0,\eta_1 \vee \eta_2]^2} \frac{dr_1 dr_2}{\{1+r_1^2+r_2^2\}^{2\al}}\\
&\hspace{6cm}\lesssim \int_1^\infty \frac{dr}{r^{2(H_0+H_1+H_2)+2(\Hti_0+\Hti_1+\Hti_2)+4\al-5}} \ .
\end{align*}
Thanks to (\ref{tech-cond-al}), we can assert that the latter integral is finite, and accordingly $\cj_3$ is finite too.

\

\noindent
\textit{Step 4: Estimation of $\cj_4$.} We have of course
$$
\cj_4 =\int_0^\infty d\eta_1 \int_{\eta_1}^{2\eta_1} d\etati_1\int_0^\infty d\eta_2\int_{\frac{\eta_2}{2}}^{\eta_2} d\etati_2\frac{1}{\{1+|\eta-\etati|^2\}^{2\al}} K^H(\eta) K^H(\etati) \ ,
$$
and from there it is easy to mimic the arguments that we have used for $\cj_3$.

\end{proof}

\

\subsubsection{Convergence of the third component}

Noting that
$$\fouri\big( \<IPsi2>^{n,m}_{s,t}\big)(\la)=\fouri\big( \big( G\ast \<Psi2>^{n,m}\big)_{s,t}\big)(\la)=\int_0^t du \, \fouri(G_u)(\la) \fouri\big(\<Psi2>^{n,m}_{s-u,t-u}\big)(\la) \ ,$$
we get
\begin{align*}
&\mathbb{E} \bigg[ \Big| \cf^{-1}\Big( \{1+|.|^2\}^{\frac12 (1-2\al)} \cf \big( \<IPsi2>^{n,m}_{s,t}\big) \big) \Big)(x)\Big|^2 \bigg]\\
&=\iint_{(\R^2)^2} \frac{d\la d\lati}{\{1+|\la|^2\}^{\frac12 (2\al-1)} \{1+|\lati|^2\}^{\frac12 (2\al-1)}}  \iint_{[0,t]^2} du d\uti\,  e^{\imath \langle x,\la-\lati \rangle} \fouri(G_u)(\la) \overline{\fouri(G_{\uti})(\lati)}\\
& \hspace{6cm} \iint_{(\R^2)^2} dy d\yti \, e^{-\imath \langle \la,y \rangle}e^{\imath \langle \lati,\yti\rangle} \mathbb{E}\big[  \<Psi2>^{n,m}_{s-u,t-u}(y)\overline{\<Psi2>^{n,m}_{s-\uti,t-\uti}(\tilde{y})}\big] \ .
\end{align*}
Starting from this expression, we can easily follow the lines of the above reasoning (for the second component) and derive that for any $\varepsilon >0$ small enough,
\begin{align*}
&\mathbb{E} \bigg[ \Big| \cf^{-1}\Big( \{1+|.|^2\}^{\frac12 (1-2\al)} \cf \big( \<IPsi2>^{n,m}_{s,t} \big) \Big)(x)\Big|^2 \bigg]\\
&\lesssim 2^{-n\varepsilon} |t-s|^\varepsilon\iint_{(\R^2)^2} \frac{d\eta d\etati}{\{1+|\eta-\etati|^2\}^{2\al-1}}\iint_{[0,t]^2} du d\uti\, |\fouri(G_u)(\eta-\etati)| |\fouri(G_{\uti})(\eta-\etati)|\\
&\hspace{3cm}  \big\{ K^{H_{\varepsilon,0}}(\eta)+K^{H_{\varepsilon,1}}(\eta)+K^{H_{\varepsilon,2}}(\eta) \big\} \big\{ K^{H_{\varepsilon,0}}(\etati)+K^{H_{\varepsilon,1}}(\etati)+K^{H_{\varepsilon,2}}(\etati) \big\}\ .
\end{align*}
Using the elementary estimate
\begin{equation}\label{bou-fouri-g}
\sup_{u\in [0,1]}\big| \cf\big( G_{u}\big)(\eta-\etati) \big| \lesssim \{1+|\eta-\etati|^2\}^{-\frac12} \ ,
\end{equation}
we have thus, for any $\varepsilon >0$ small enough,
\begin{align*}
&\mathbb{E} \bigg[ \Big| \cf^{-1}\Big( \{1+|.|^2\}^{\frac12 (1-2\al)} \cf \big( \<IPsi2>^{n,m}_{s,t}  \big) \Big)(x)\Big|^2 \bigg]\lesssim 2^{-n\varepsilon} |t-s|^\varepsilon\\
&\iint_{(\R^2)^2} \frac{d\eta d\etati}{\{1+|\eta-\etati|^2\}^{2\al}}  \big\{ K^{H_{\varepsilon,0}}(\eta)+K^{H_{\varepsilon,1}}(\eta)+K^{H_{\varepsilon,2}}(\eta) \big\} \big\{ K^{H_{\varepsilon,0}}(\etati)+K^{H_{\varepsilon,1}}(\etati)+K^{H_{\varepsilon,2}}(\etati) \big\} \ .
\end{align*}
Observe that we are here in the very same position as in (\ref{conclu-ordre-deux}), and so, using the same technical Lemma \ref{lem:tech-ordre-deux}, we get the desired estimate (\ref{mom-gene}) for $\tau=\<IPsi2>$.

\

\subsubsection{Convergence of the fourth component}

First, observe that $\fouri \big(\<PsiIPsi2>^{n,m}_{s,t}\big) $ can be readily expanded as
\begin{eqnarray*}
\fouri \big(\<PsiIPsi2>^{n,m}_{s,t} \big) &=&\fouri \big( \big( G\ast \<Psi2>^{n,m}\big)_{s,t}\big)  \ast \fouri\big( \<Psi>^m_t \big)+\fouri \big( \big( G\ast \<Psi2>^{n}\big)_{s,t}\big)  \ast \fouri\big( \<Psi>^{n,m}_t \big)\\
& &+\fouri \big( \big( G\ast \<Psi2>^{n,m}\big)_{s}\big)  \ast \fouri\big( \<Psi>^m_{s,t} \big)+\fouri \big( \big( G\ast \<Psi2>^{n}\big)_{s}\big)  \ast \fouri\big( \<Psi>^{n,m}_{s,t} \big) \ .
\end{eqnarray*}
As it should be clear to the reader, the subsequent arguments could be applied to any of these four terms, and thus we will only focus on the estimate of
\begin{align}
&\mathbb{E} \bigg[ \Big| \cf^{-1}\Big( \{1+|.|^2\}^{-\frac{\al}{2}} \big(\cf \big( ( G \ast \<Psi2>^{n,m})_{s,t}\big) \ast \cf\big(\<Psi>^m_t\big) \big)\Big)(x)\Big|^2 \bigg]\nonumber \\
&=\iint_{(\R^2)^2} \frac{d\la d\lati}{\{1+|\la|^2\}^{\frac{\al}{2}} \{1+|\lati|^2\}^{\frac{\al}{2}}}  \iint_{(\R^2)^2} d\beta  d\betati\,  e^{\imath \langle x,\la-\lati \rangle} \iint_{[0,t]^2} du d\uti\, \fouri\big( G_{u}\big)(\beta) \overline{\fouri\big( G_{\uti}\big)(\betati)}\nonumber \\
& \qquad \iint_{(\R^2)^2} dy d\yti \iint_{(\R^2)^2} dz d\zti \, e^{-\imath \langle \beta,y \rangle}e^{\imath \langle \betati,\yti\rangle} e^{-\imath \langle \la-\beta,z\rangle}  e^{\imath \langle \lati-\betati,\zti\rangle} \mathbb{E} \Big[ \<Psi2>^{n,m}_{s-u,t-u}(y)\<Psi>^m_t(z) \overline{\<Psi2>^{n,m}_{s-\uti,t-\uti}(\yti)} \overline{\<Psi>^m_t(\zti)}  \Big] \ .\label{order-three-1}
\end{align}
Using Wick formula, we can then expand (along the same idea as in Section \ref{subsec:sec-comp}) the expectation
$$\mathbb{E} \Big[ \<Psi2>^{n,m}_{s-u,t-u}(y)\<Psi>^m_t(z) \overline{\<Psi2>^{n,m}_{s-\uti,t-\uti}(\yti)} \overline{\<Psi>^m_t(\zti)}  \Big] \,$$
as a sum of terms of the form
\begin{equation}\label{wick-term-1}
c_{a;b}^1\, \mathbb{E} \big[ \<Psi>^{a_1}_{b_1}(y) \overline{\<Psi>^{a_2}_{b_2}(\yti)}\big] \mathbb{E} \big[ \<Psi>^{a_3}_{b_3}(y) \overline{\<Psi>^{a_4}_{b_4}(\yti)}\big]   \mathbb{E} \big[ \<Psi>^m_t(z) \overline{\<Psi>^m_t(\zti)} \big]\ ,
\end{equation}
\begin{equation}\label{wick-term-2}
c_{a;b}^2\, \mathbb{E} \big[ \<Psi>^{a_1}_{b_1}(y) \overline{\<Psi>^{a_2}_{b_2}(\yti)}\big] \mathbb{E} \big[ \<Psi>^{a_3}_{b_3}(y) \<Psi>^m_t(z)\big]\mathbb{E} \big[ \overline{\<Psi>^{a_4}_{b_4}(\yti)} \overline{\<Psi>^m_t(\zti)}\big]
\end{equation}
or
\begin{equation}\label{wick-term-3}
c_{a;b}^3\, \mathbb{E} \big[ \<Psi>^{a_1}_{b_1}(y) \overline{\<Psi>^{a_2}_{b_2}(\yti)}\big] \mathbb{E} \big[ \<Psi>^{a_3}_{b_3}(y) \overline{\<Psi>^m_t(\zti)}\big] \mathbb{E} \big[ \overline{\<Psi>^{a_4}_{b_4}(\yti)} \<Psi>^m_t(z)\big] \ , 
\end{equation}
where $a_i\in \{n,m,\{n,m\}\}$, $b_1,b_3\in \{s-u,t-u,\{s-u,t-u\}\}$, $b_2,b_4\in \{s-\uti,t-\uti,\{s-\uti,t-\uti\}\}$, and one has both $\{a_1,\ldots,a_4\} \cap \{\{n,m\}\} \neq \emptyset$ and
$\{b_1,\ldots,b_4\} \cap \big( \{\{s-u,t-u\},\{s-\uti,t-\uti\}\} \big) \neq \emptyset$. An example of a pair $(a;b)$ satisfying these conditions is given by
\begin{equation}\label{examp-a-b}
(a;b)=\big((\{n,m\},m,m,m)\, ;\, (\{s-u,t-u\},t-\uti,t-u,t-\uti)\big) \ .
\end{equation}
In the sequel, and for the sake of clarity, we will only focus on the estimates associated with this particular pair $(a;b)$, but it should (again) be clear to the reader that any other pair $(a;b)$ satisfying the above conditions could be treated with similar arguments.   

\smallskip

Injecting successively (\ref{wick-term-1}), (\ref{wick-term-2}) and (\ref{wick-term-3}) into (\ref{order-three-1}) (with $(a;b)$ fixed as in (\ref{examp-a-b})) gives rise to the consideration of three specific integrals, that we denote by $\cj_1$, $\cj_2$ and $\cj_3$, respectively.

\

\noindent
\textit{Estimation of $\cj_1$.} Using the covariance formula (\ref{cova-gene}), we get on the one hand 
\begin{align}
&\iint_{(\R^2)^2} dy d\yti \, e^{-\imath \langle \beta,y \rangle}e^{\imath \langle \betati,\yti\rangle} \mathbb{E} \big[ \<Psi>^{n,m}_{s-u,t-u}(y) \overline{\<Psi>^{m}_{t-\uti}(\yti)}\big] \mathbb{E} \big[ \<Psi>^{m}_{t-u}(y) \overline{\<Psi>^{m}_{t-\uti}(\yti)}\big]  \nonumber \\
&=c \iint_{(\R^2)^2} d\eta d\etati \, L^{H,((n,m),m)}_{(s-u,t-u),t-\uti}(\eta) L^{H,(m,m)}_{t-u,t-\uti}(\etati) \bigg(\int_{\R^2} dy \, e^{-\imath \langle y,\beta-(\eta+\etati) \rangle}\bigg) \bigg( \int_{\R^2} d\yti \, e^{\imath \langle \yti,\betati-(\eta+\etati)\rangle} \bigg)\nonumber\\
&=c\iint_{(\R^2)^2} d\eta d\etati \, L^{H,((n,m),m)}_{(s-u,t-u),t-\uti}(\eta) L^{H,(m,m)}_{t-u,t-\uti}(\etati) \delta_{\{\beta=\betati=\eta+\etati\}} \ ,\label{fouri-simpli}
\end{align}
and on the other hand
$$\iint_{(\R^2)^2} dz d\zti \, e^{-\imath \langle \la-\beta,z\rangle}  e^{\imath \langle \lati-\betati,\zti\rangle} \mathbb{E} \big[ \<Psi>^{m}_{t}(z) \overline{\<Psi>^{m}_{t}(\zti)} \big]= c\int_{\R^2} d\etatiti \, L^{H,(m,m)}_{t,t}(\etatiti) \, \delta_{\{\la-\beta=\lati-\betati=\etatiti\}} \  ,$$
so that
\begin{align}
&\cj_1=c\iint_{[0,t]^2} du d\uti \iiint_{(\R^2)^3} \frac{d\eta d\etati d\etatiti}{\{1+|\eta+\etati+\etatiti|^2\}^{\al}} \,  L^{H,((n,m),m)}_{(s-u,t-u),t-\uti}(\eta) L^{H,(m,m)}_{t-u,t-\uti}(\etati) L^{H,(m,m)}_{t,t}(\etatiti)\nonumber\\
&\hspace{6cm} \fouri\big( G_{u}\big)(\eta+\etati) \overline{\fouri\big( G_{\uti}\big)(\eta+\etati)}\label{ref-i-1} \\
&=c\iint_{[0,t]^2} du d\uti \iint_{(\R^2)^2} d\eta d\etatiti \, \{1+|\eta+\etatiti|^2\}^{-\al}\fouri\big( G_{u}\big)(\eta) \overline{\fouri\big( G_{\uti}\big)(\eta)}\nonumber\\
&  \hspace{3cm} L^{H,(m,m)}_{t,t}(\etatiti)\bigg( \int_{\R^2} d\etati \, L^{H,((n,m),m)}_{(s-u,t-u),t-\uti}(\eta-\etati)L^{H,(m,m)}_{t-u,t-\uti}(\etati) \bigg) \ .\label{ref-i-2}
\end{align}
At this point, we can apply Lemma \ref{lem:bou-l-h} to assert that for any $\varepsilon >0$ small enough,
\begin{align*}
&\bigg| L^{H,(m,m)}_{t,t}(\etatiti)\bigg( \int_{\R^2} d\etati \, L^{H,((n,m),m)}_{(s-u,t-u),t-\uti}(\eta-\etati)L^{H,(m,m)}_{t-u,t-\uti}(\etati) \bigg)\bigg|\\
&\lesssim 2^{-n\varepsilon} |t-s|^\varepsilon K^{H_{\varepsilon,0}}(\etatiti)\sum_{i=0}^2 \bigg( \int_{\R^2} d\etati \, K^{H_{\varepsilon,i}}(\eta-\etati)K^{H_{\varepsilon,0}}(\etati) \bigg) \ .
\end{align*}
Thanks to the forthcoming Lemma \ref{lem:norm-l2-l-h}, we know that for every $i\in \{0,1,2\}$ and for any $\varepsilon >0$ small enough,
$$\sup_{\eta\in \R^2} \int_{\R^2} d\etati \, K^{H_{\varepsilon,i}}(\eta-\etati)K^{H_{\varepsilon,0}}(\etati) \leq  \bigg(\int_{\R^2} d\etati \, \big| K^{H_{\varepsilon,i}}(\etati)\big|^2\bigg)^{\frac12}\bigg(\int d\etati \, \big| K^{H_{\varepsilon,0}}(\etati)\big|^2\bigg)^{\frac12} \ <  \ \infty \ .$$
Going back to (\ref{ref-i-2}) and using also (\ref{bou-fouri-g}), we have thus obtained that 
\begin{equation}\label{bou-i-1}
\cj_1\lesssim 2^{-n\varepsilon} |t-s|^\varepsilon\int_{\R^2} d\etatiti \, K^{H_{\varepsilon,0}}(\etatiti) \int_{\R^2} d\eta \, \{1+|\eta+\etatiti|^2\}^{-\al}\{1+|\eta|^2\}^{-1} \ .
\end{equation}
Applying the subsequent technical Lemma \ref{lem:techn-lem-order-three} finally yields 
$$\cj_1 \lesssim 2^{-n\varepsilon} |t-s|^\varepsilon\int_{\R^2} \frac{d\etatiti}{\{1+|\etatiti|^2\}^{\al-\varepsilon} } \,  K^{H_{\varepsilon,0}}(\etatiti)\ ,$$
and the conclusion now comes from the first-order assertion (\ref{tech-estim-first-order}).

\

\noindent
\textit{Estimation of $\cj_2$.} Using the same arguments as in (\ref{fouri-simpli}) (with the help of both (\ref{cova-gene}) and (\ref{cova-gene-bis})), we get
\begin{align*}
&\iint_{(\R^2)^2} dy d\yti\iint_{(\R^2)^2} dz d\zti  \, e^{-\imath \langle \beta,y \rangle}e^{\imath \langle \betati,\yti\rangle}e^{-\imath \langle \la-\beta,z\rangle}  e^{\imath \langle \lati-\betati,\zti\rangle}\\
&\hspace{3cm} \mathbb{E} \big[ \<Psi>^{n,m}_{s-u,t-u}(y) \overline{\<Psi>^m_{t-\uti}(\yti)}\big] \mathbb{E} \big[ \<Psi>^m_{t-u}(y) \<Psi>^m_{t}(z)\big]\mathbb{E} \big[ \overline{\<Psi>^m_{t-\uti}(\yti)} \overline{\<Psi>^m_{t}(\zti)}\big] \\
&=c\iiint_{(\R^2)^3} d\eta d\etati d\etatiti \, L^{H,((n,m),m)}_{(s-u,t-u),t-\uti}(\eta) \tilde{L}^{H,(m,m)}_{t-u,t}(\etati) \overline{\tilde{L}^{H,(m,m)}_{t-\uti,t}(\etatiti)} \delta_{\{\beta=\eta+\etati\}}\delta_{\{\betati=\eta+\etatiti\}}\delta_{\{\la=\eta+2\etati\}}\delta_{\{\lati=\eta+2\etatiti\}} \ ,
\end{align*}
and so
\begin{align}
&\cj_2=c\iint_{[0,t]^2} du d\uti\iiint_{(\R^2)^3} d\eta d\etati d\etatiti \, L^{H,((n,m),m)}_{(s-u,t-u),t-\uti}(\eta) \tilde{L}^{H,(m,m)}_{t-u,t}(\etati) \overline{\tilde{L}^{H,(m,m)}_{t-\uti,t}(\etatiti)}\nonumber\\
&\hspace{3cm} \{1+|\eta+2\etati|^2\}^{-\frac{\al}{2}} \{1+|\eta+2\etatiti|^2\}^{-\frac{\al}{2}}\fouri\big( G_{u}\big)(\eta+\etati) \overline{\fouri\big( G_{\uti}\big)(\eta+\etatiti)}\nonumber\\
&=c\iint_{[0,t]^2} du d\uti\int_{\R^2} d\eta\, L^{H,((n,m),m)}_{(s-u,t-u),t-\uti}(\eta) \bigg( \int_{\R^2} d\etati \, \tilde{L}^{H,(m,m)}_{t-u,t}(\etati) \{1+|\eta+2\etati|^2\}^{-\frac{\al}{2}}  \fouri\big( G_{u}\big)(\eta+\etati)  \bigg)\nonumber\\
& \hspace{3cm}\bigg( \int_{\R^2} d\etatiti \, \overline{\tilde{L}^{H,(m,m)}_{t-\uti,t}(\etatiti)}\{1+|\eta+2\etatiti|^2\}^{-\frac{\al}{2}}  \overline{\fouri\big( G_{\uti}\big)(\eta+\etatiti)}  \bigg) \ .\label{bou-i-2-1}
\end{align}
Combining (\ref{bou-l-1}) with the result of Lemma \ref{lem:norm-l2-l-h} and estimate (\ref{bou-fouri-g}), we can assert that for any $\varepsilon >0$ small enough,
\begin{eqnarray*}
\lefteqn{ \sup_{u\in [0,t]} \bigg|\int_{\R^2} d\etati \, \tilde{L}^{H,(m,m)}_{t-u,t}(\etati) \{1+|\eta+2\etati|^2\}^{-\frac{\al}{2}}  \fouri\big( G_{u}\big)(\eta+\etati)\bigg|}\\
&\lesssim& \bigg( \int_{\R^2} d\etati \, \big|K^{H_{\varepsilon,0}}(\etati) \big|^2 \bigg)^{1/2} \bigg( \int_{\R^2} d\etati \, \{1+|\eta+2\etati|^2\}^{-\al} \{1+|\eta+\etati|^2\}^{-1} \bigg)^{1/2} \\
&\lesssim&\bigg( \int_{\R^2} d\etati \, \{1+|\eta+\etati|^2\}^{-\al} \{1+|\etati|^2\}^{-1} \bigg)^{1/2} \ ,
\end{eqnarray*}
and similarly
$$\sup_{\uti\in [0,t]} \bigg| \int_{\R^2} d\etatiti \, \overline{\tilde{L}^{H,(m,m)}_{t-\uti,t}(\etatiti)}\{1+|\eta+2\etatiti|^2\}^{-\frac{\al}{2}}  \overline{\fouri\big( G_{t-\uti}\big)(\eta+\etatiti)} \bigg|\lesssim \bigg( \int_{\R^2} d\etatiti \, \{1+|\eta+\etatiti|^2\}^{-\al} \{1+|\etatiti|^2\}^{-1} \bigg)^{1/2}\ ,$$
which, going back to (\ref{bou-i-2-1}), yields
\begin{eqnarray*}
\cj_2 &\lesssim & \iint_{[0,t]^2} du d\uti\int_{\R^2} d\eta\, L^{H,((n,m),m)}_{(s-u,t-u),t-\uti}(\eta) \int_{\R^2} d\etati \, \{1+|\eta+\etati|^2\}^{-\al} \{1+|\etati|^2\}^{-1} \\
&\lesssim & 2^{-n\varepsilon}|t-s|^\varepsilon \sum_{i=0}^2 \int_{\R^2} d\eta\, K^{H_{\varepsilon,i}}(\eta) \int_{\R^2} d\etati \, \{1+|\eta+\etati|^2\}^{-\al} \{1+|\etati|^2\}^{-1} \ .
\end{eqnarray*}
We can then conclude with the same arguments as in (\ref{bou-i-1}).

\

\noindent
\textit{Estimation of $\cj_3$.} As above,
\begin{align*}
&\iint_{(\R^2)^4} dy d\yti dz d\zti  \, e^{-\imath \langle \beta,y \rangle}e^{\imath \langle \betati,\yti\rangle}e^{-\imath \langle \la-\beta,z\rangle}  e^{\imath \langle \lati-\betati,\zti\rangle} \mathbb{E} \big[ \<Psi>^{n,m}_{s-u,t-u}(y) \overline{\<Psi>^m_{t-\uti}(\yti)}\big] \mathbb{E} \big[ \<Psi>^m_{t-u}(y) \overline{\<Psi>^m_{t}(\zti)}\big] \mathbb{E} \big[ \overline{\<Psi>^m_{t-\uti}(\yti)} \<Psi>^m_{t}(z)\big] \\
&\hspace{1cm}=c\iiint_{(\R^2)^3} d\eta d\etati d\etatiti \, L^{H,((n,m),m)}_{(s-u,t-u),t-\uti}(\eta) L^{H,(m,m)}_{t-u,t}(\etati) \overline{L^{H,(m,m)}_{t-\uti,t}(\etatiti)}\, \delta_{\{\beta=\eta+\etati\}}\delta_{\{\betati=\eta+\etatiti\}}\delta_{\{\la=\lati=\eta+\etati+\etatiti\}} \ ,
\end{align*}
and thus, for any $\varepsilon >0$ small enough,
\begin{eqnarray*}
\big| \cj_3\big|&=&c\, \bigg|\iint_{[0,t]^2} du d\uti\iiint_{(\R^2)^3} \frac{d\eta d\etati d\etatiti}{\{1+|\eta+\etati+\etatiti|^2\}^{\al}} \, L^{H,((n,m),m)}_{(s-u,t-u),t-\uti}(\eta) L^{H,(m,m)}_{t-u,t}(\etati) \overline{L^{H,(m,m)}_{t-\uti,t}(\etatiti)}\\
& &\hspace{5cm}  \fouri\big( G_{u}\big)(\eta+\etati) \overline{\fouri\big( G_{\uti}\big)(\eta+\etatiti)}\bigg|\\
&\lesssim&2^{-n\varepsilon} |t-s|^{\varepsilon}\sum_{i=0}^2 \iiint_{(\R^2)^3} d\eta d\etati d\etatiti \, K^{H_{\varepsilon,i}}(\eta) K^{H_{\varepsilon,0}}(\etati) K^{H_{\varepsilon,0}}(\etatiti)\\
& &\hspace{3cm}  \{1+|\eta+\etati+\etatiti|^2\}^{-\al}\{1+|\eta+\etati|^2\}^{-\frac12}\{1+|\eta+\etatiti|^2\}^{-\frac12} \ .
\end{eqnarray*}
Now split the integration domain into $\cd_1\triangleq \{(\eta,\etati,\etatiti): \, \{1+|\eta+\etati|^2\}^{-\frac12} \leq \{1+|\eta+\etatiti|^2\}^{-\frac12}\}$ and $\cd_2 \triangleq \{(\eta,\etati,\etatiti): \, \{1+|\eta+\etatiti|^2\}^{-\frac12} \leq \{1+|\eta+\etati|^2\}^{-\frac12}\}$, and write (trivially)
\begin{align*}
&\iiint_{\cd_1} d\eta d\etati d\etatiti \, K^{H_{\varepsilon,i}}(\eta) K^{H_{\varepsilon,0}}(\etati) K^{H_{\varepsilon,0}}(\etatiti)\{1+|\eta+\etati+\etatiti|^2\}^{-\al}\{1+|\eta+\etati|^2\}^{-\frac12}\{1+|\eta+\etatiti|^2\}^{-\frac12}\\
&\leq \iiint_{(\R^2)^3} d\eta d\etati d\etatiti \, K^{H_{\varepsilon,i}}(\eta) K^{H_{\varepsilon,0}}(\etati) K^{H_{\varepsilon,0}}(\etatiti)\{1+|\eta+\etati+\etatiti|^2\}^{-\al}\{1+|\eta+\etatiti|^2\}^{-1} \ ,
\end{align*}
which essentially brings us back to the integral involved in (\ref{ref-i-1}). We can thus rely on the same arguments as with $\cj_1$ to handle the integral over $\cd_1$. Finally, it is readily checked that these arguments can also be used for the integral over $\cd_2$, which concludes the proof.

\

It only remains us to prove the two technical lemmas at the core of the above reasoning.
\begin{lemma}\label{lem:norm-l2-l-h}
For all $(H_0,H_1,H_2)\in (0,1)^{3}$ such that 
\begin{equation}\label{co-h-lem}
0<H_1<\frac34 \quad , \quad 0<H_2< \frac34 \quad \text{and} \quad H_0+H_1+H_2 >1 \ ,
\end{equation}
it holds that
$$\int_{\R^2} d\eta \, | K^H(\eta)|^2 \ < \ \infty \ .$$ 
\end{lemma}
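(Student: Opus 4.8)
The plan is to evaluate the integral explicitly after passing to polar coordinates. Writing $\eta = r\omega$ with $r=|\eta|\in(0,\infty)$ and $\omega=(\omega_1,\omega_2)$ on the unit circle, so that $d\eta = r\,dr\,d\omega$, and using $|\eta_i|=r|\omega_i|$ together with $|\eta|=r$, one gets $|K^H(\eta)|^2 = \frac{r^{4-4H_1-4H_2}}{(1+r^{1+2H_0})^2}\,|\omega_1|^{2-4H_1}\,|\omega_2|^{2-4H_2}$, and hence the integral factorises:
\[
\int_{\R^2}|K^H(\eta)|^2\,d\eta \;=\; \Big(\int_0^\infty \frac{r^{5-4H_1-4H_2}}{(1+r^{1+2H_0})^2}\,dr\Big)\cdot\Big(\int_{\{|\omega|=1\}}|\omega_1|^{2-4H_1}|\omega_2|^{2-4H_2}\,d\omega\Big).
\]
It then suffices to check the finiteness of each of the two factors under hypothesis (\ref{co-h-lem}).

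For the angular factor, parametrise $\omega=(\cos\theta,\sin\theta)$, $\theta\in[0,2\pi)$; the integrand can only be singular near $\theta\in\{\pi/2,3\pi/2\}$ (where $\omega_1$ vanishes) and near $\theta\in\{0,\pi\}$ (where $\omega_2$ vanishes). Near $\theta=\pi/2$ one has $|\omega_1|\asymp|\theta-\pi/2|$ while $|\omega_2|$ stays bounded away from $0$, so local integrability of $|\omega_1|^{2-4H_1}$ amounts to $2-4H_1>-1$, i.e. $H_1<\tfrac34$; symmetrically, the points $\theta\in\{0,\pi\}$ force $H_2<\tfrac34$. Thus the angular factor is finite precisely because $H_1,H_2<\tfrac34$.

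For the radial factor, near $r=0$ the integrand is comparable to $r^{5-4H_1-4H_2}$, which is integrable at the origin as soon as $5-4H_1-4H_2>-1$, i.e. $H_1+H_2<\tfrac32$ — automatic from $H_1,H_2<\tfrac34$. Near $r=+\infty$ the integrand is comparable to $r^{5-4H_1-4H_2}\,r^{-2(1+2H_0)}=r^{3-4(H_0+H_1+H_2)}$, which is integrable at infinity if and only if $3-4(H_0+H_1+H_2)<-1$, i.e. $H_0+H_1+H_2>1$ — exactly the remaining hypothesis in (\ref{co-h-lem}). Combining, both factors are finite and the claim follows. Every step here is an elementary one-variable integrability check, so there is no genuine obstacle; the only point requiring a little care is the bookkeeping of which of the three conditions in (\ref{co-h-lem}) controls which piece, and the observation that the singularities of the numerator of $K^H$ along the coordinate axes become, after the change of variables, the (still integrable) singularities of the angular integral at the points where $\omega_1$ or $\omega_2$ vanishes.
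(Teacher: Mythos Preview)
Your proof is correct and follows essentially the same route as the paper: both pass to polar coordinates, factor the integral into a radial and an angular part, and check the same one-variable integrability conditions (the paper's version is just more terse, first bounding $(1+|\eta|^{1+2H_0})^2\gtrsim 1+|\eta|^{2+4H_0}$ and then absorbing the finite angular integral into the implicit constant). Your more explicit bookkeeping of which hypothesis in (\ref{co-h-lem}) controls which singularity is a nice addition.
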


\begin{proof}
One has, since $4H_1-2<1$ and $4H_2-2<1$, 
$$
\int_{\R^2} d\eta \, | K^H(\eta)|^2  \lesssim \iint_{\R^2} \frac{d\eta_1 d\eta_2}{|\eta_1|^{4H_1-2} |\eta_2|^{4H_2-2}\{1+|\eta|^{2+4H_0}\}}  \lesssim \int_0^\infty \frac{d\rho}{\rho^{4(H_1+H_2)-5}\{1+\rho^{2+4H_0}\} }\ ,
$$
and we can easily check (using (\ref{co-h-lem})) that the latter integral is indeed finite.
\end{proof}

\begin{lemma}\label{lem:techn-lem-order-three}
For all $0<\varepsilon < \al<\frac12$, it holds that
$$\int_{\R^2} d\etati \, \{1+|\eta+\etati|^2\}^{-\al}\{1+|\etati|^2\}^{-1}\lesssim \{1+|\eta|^2\}^{-(\al-\varepsilon)} \ .$$
\end{lemma}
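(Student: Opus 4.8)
The plan is to reduce to the case $|\eta|\ge 1$ and then split the $\etati$-integral into three regions according to the relative size of $|\etati|$ and $|\eta|$, dominating in each region whichever of the two factors is the harmless one. Writing $\langle\zeta\rangle\triangleq\{1+|\zeta|^2\}^{1/2}$ for $\zeta\in\R^2$, the claim becomes
\[
I(\eta)\ \triangleq\ \int_{\R^2}d\etati\,\langle\eta+\etati\rangle^{-2\al}\,\langle\etati\rangle^{-2}\ \lesssim\ \langle\eta\rangle^{-2(\al-\varepsilon)}\ .
\]
First I would dispose of $|\eta|\le 1$: on $\{|\etati|\ge 4\}$ one has $|\eta+\etati|\ge\frac12|\etati|$, so the integrand is $\lesssim\langle\etati\rangle^{-2\al-2}$, which is integrable over $\R^2$ because $2\al+2>2$; on $\{|\etati|\le 4\}$ the integrand is $\lesssim\langle\eta+\etati\rangle^{-2\al}$ and $\int_{|\etati|\le 4}\langle\eta+\etati\rangle^{-2\al}\,d\etati\le\int_{|\zeta|\le 5}\langle\zeta\rangle^{-2\al}\,d\zeta<\infty$ since $2\al<2$. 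Hence $I(\eta)\lesssim 1\lesssim\langle\eta\rangle^{-2(\al-\varepsilon)}$ on $\{|\eta|\le 1\}$, and it remains to treat $|\eta|\ge 1$.

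For $|\eta|\ge 1$ I would decompose $\R^2=A\cup B\cup C$ with $A\triangleq\{|\etati|\le\frac12|\eta|\}$, $B\triangleq\{|\etati|\ge 2|\eta|\}$ and $C\triangleq\{\frac12|\eta|\le|\etati|\le 2|\eta|\}$. On $A$ one has $|\eta+\etati|\ge\frac12|\eta|$, hence $\langle\eta+\etati\rangle^{-2\al}\lesssim\langle\eta\rangle^{-2\al}$, and since $\int_{|\etati|\le|\eta|}\langle\etati\rangle^{-2}\,d\etati\lesssim\log(2+|\eta|)\lesssim_{\varepsilon}\langle\eta\rangle^{2\varepsilon}$, the contribution of $A$ is $\lesssim\langle\eta\rangle^{-2\al+2\varepsilon}$. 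On $B$ one has $|\eta+\etati|\ge\frac12|\etati|$, hence $\langle\eta+\etati\rangle^{-2\al}\lesssim\langle\etati\rangle^{-2\al}$, and $\int_{|\etati|\ge|\eta|}\langle\etati\rangle^{-2\al-2}\,d\etati\lesssim\langle\eta\rangle^{-2\al}$ using $2\al>0$. On $C$ one has $\langle\etati\rangle^{-2}\lesssim\langle\eta\rangle^{-2}$, and after the change of variables $\zeta=\eta+\etati$,
\[
\int_{|\etati|\le 2|\eta|}\langle\eta+\etati\rangle^{-2\al}\,d\etati\ \le\ \int_{|\zeta|\le 3|\eta|}\langle\zeta\rangle^{-2\al}\,d\zeta\ \lesssim\ \langle\eta\rangle^{2-2\al}\ ,
\]
the last bound using $2\al<2$; thus the contribution of $C$ is $\lesssim\langle\eta\rangle^{-2\al}$. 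Adding the three estimates yields $I(\eta)\lesssim\langle\eta\rangle^{-2\al+2\varepsilon}=\langle\eta\rangle^{-2(\al-\varepsilon)}$, which is the assertion.

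The only point where the $\varepsilon$-loss is genuinely needed — and hence the sole (mild) obstacle — is region $A$: the weight $\langle\etati\rangle^{-2}$ is exactly critical in dimension two, so $\int_{|\etati|\le|\eta|}\langle\etati\rangle^{-2}\,d\etati$ diverges logarithmically as $|\eta|\to\infty$, and it is precisely this logarithm that gets absorbed into $\langle\eta\rangle^{2\varepsilon}$. Everything else is routine and relies only on $0<\al<\frac12$ (through $2\al<2$ for region $C$ and $2\al>0$ for region $B$).
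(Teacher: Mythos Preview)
Your proof is correct. The approach differs slightly from the paper's: rather than first reducing to a one-dimensional integral via the crude bound $|\eta+\etati|\geq \big||\eta|-|\etati|\big|$ and then splitting the radial variable $\rho=|\etati|$ into the two regions $[\frac{|\eta|}{2},\frac{3|\eta|}{2}]$ and its complement, you work directly in $\R^2$ with the three-region decomposition $A\cup B\cup C$ according to the size of $|\etati|$ relative to $|\eta|$. Your regions $A$ and $B$ together correspond to the paper's $\cd_2$ (where the paper splits the exponent $\al=(\al-\varepsilon)+\varepsilon$ to make $\int_0^\infty \frac{\rho\, d\rho}{\{1+\rho^2\}^{1+\varepsilon}}$ converge, which is exactly the same logarithmic obstruction you isolate in region $A$), while your region $C$ plays the role of the paper's $\cd_1$. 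Both arguments are elementary; yours has the minor advantage of not throwing away angular information at the outset, and of pinpointing more explicitly where the $\varepsilon$-loss originates.
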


\begin{proof}
Let us first write
$$
\int_{\R^2} d\etati \, \{1+|\eta+\etati|^2\}^{-\al}\{1+|\etati|^2\}^{-1}
 \lesssim \int_{\R^2} d\etati \, \{1+||\eta|-|\etati||^2\}^{-\al}\{1+|\etati|^2\}^{-1}\lesssim \int_0^\infty  \frac{d\rho\, \rho}{1+\rho^2} \{1+||\eta|-\rho|^2\}^{-\al} \ .\\
$$
Now split the integration domain into $\cd_1\triangleq [\frac{|\eta|}{2},\frac{3|\eta|}{2}]$ and $\cd_2 \triangleq \{ 0\leq \rho\leq \frac{|\eta|}{2} \ \text{or} \ \rho\geq \frac{3|\eta|}{2} \}$. On the one hand,
\begin{eqnarray*}
\int_{\cd_1} \frac{d\rho\, \rho}{1+\rho^2} \{1+||\eta|-\rho|^2\}^{-\al}&=&|\eta|^2 \int_{\frac12}^{\frac32} \frac{dr\, r}{\{1+|\eta|^2(1-r)^2\}^\al \{1+|\eta|^2 r^2\}} \\
&\lesssim& \int_{-\frac12}^{\frac12} \frac{dr}{\{1+|\eta|^2 r^2\}^\al} \ \lesssim \ \max\bigg(1,\frac{1}{|\eta|^{2\al}} \int_0^1 \frac{dr}{r^{2\al}} \bigg)  \ \lesssim \ \frac{1}{1+|\eta|^{2\al}} \ .
\end{eqnarray*}
On the other hand, for every $\rho\in \cd_2$, one has $||\eta|-\rho| \geq \frac13 \max(\rho,|\eta|)$, and accordingly
$$\int_{\cd_2} \frac{d\rho\, \rho}{1+\rho^2} \{1+||\eta|-\rho|^2\}^{-\al} \lesssim \{1+|\eta|^2\}^{-(\al-\varepsilon)} \int_0^\infty \frac{d\rho\, \rho}{\{1+\rho^2\}^{1+\varepsilon}}\lesssim \{1+|\eta|^2\}^{-(\al-\varepsilon)} \ .$$
\end{proof}

\subsection{Proof of Proposition \ref{prop:limit-case}}
In the sequel, we use the notation $A\gtrsim B$ whenever there exists a constant $c>0$ such that $A\geq c B$. Besides, without loss of generality, we can here assume that $\al > \frac12 $. For the sake of clarity, let us also introduce the additional notation
\begin{equation}
\gga^{H_0,n}_{t}(\rho)\triangleq\int_{-2^n}^{2^n} d\xi \, \frac{|\ga_t(\xi,\rho)|^2 }{|\xi|^{2H_0-1}} \ .
\end{equation}

\smallskip

\noindent
Using (\ref{cova-gene}) and then Wick formula just as in Section \ref{subsec:sec-comp}, we get that
\begin{eqnarray*}
\lefteqn{\mathbb{E}\Big[ \|\<Psi2>^n(t,.)\|_{\mathcal{W}^{-2\al,2}(D)}^2\Big]} \\
&=&c\int_{|\eta|\leq 2^n} d\eta \int_{|\etati|\leq 2^n} d\etati\, \frac{1}{\{1+|\eta-\etati|^2\}^{2\al}} \frac{\gga^{H_0,n}_{t}(|\eta|)}{|\eta_1|^{2H_1-1} |\eta_2|^{2H_2-1}} \frac{\gga^{H_0,n}_{t}(|\etati|)}{|\etati_1|^{2H_1-1}|\etati_2|^{2H_2-1}}  \\
&\gtrsim&\int_{0}^{2^{n-1}} d\eta_1 \int_{\frac12 \eta_1}^{\eta_1} d\etati_1 \int_0^{2^{n-1}} d\eta_2 \int_{\frac12 \eta_2}^{\eta_2} d\etati_2 \,   \frac{1}{\{1+|\eta-\etati|^2\}^{2\al}}  \frac{\gga^{H_0,n}_{t}(|\eta|)}{|\eta_1|^{2H_1-1} |\eta_2|^{2H_2-1}} \frac{\gga^{H_0,n}_{t}(|\etati|)}{|\etati_1|^{2H_1-1}|\etati_2|^{2H_2-1}} \\
&\gtrsim& \int_{0}^{2^{n-1}} \int_0^{2^{n-1}} \frac{d\eta_1 d\eta_2}{|\eta_1|^{4H_1-3} |\eta_2|^{4H_2-3}}\int_0^{\frac12} \int_0^{\frac12}\frac{dr_1 dr_2}{\{1+\eta_1^2r_1^2+\eta_2^2r_2^2\}^{2\al}} \\
& &\hspace{6cm} \gga^{H_0,n}_{t}(|\eta|) \gga^{H_0,n}_{t} \Big( \sqrt{\eta_1^2 (1-r_1)^2+\eta_2^2(1-r_2)^2}\Big)\\
&\gtrsim& \int_{0<\eta_1<\eta_2<2^{n-1}} \frac{d\eta_1 d\eta_2}{|\eta_1|^{4H_1-2} |\eta_2|^{4H_2-2}}\int_0^{\frac12 \eta_1} \int_0^{\frac12 \eta_1}\frac{dr_1 dr_2}{\{1+r_1^2+r_2^2\}^{2\al}} \\
& &\hspace{6cm} \gga^{H_0,n}_{t}(|\eta|) \gga^{H_0,n}_{t} \Big( \sqrt{\eta_1^2 \big(1-\frac{r_1}{\eta_1}\big)^2+\eta_2^2\big(1-\frac{r_2}{\eta_2}\big)^2}\Big)  \\
&\gtrsim& \int_{\frac{\pi}{8}}^{\frac{\pi}{4}} d\theta \int_2^{2^{n-1}} \frac{d\rho}{\rho^{4H'_1+4H'_2-5}} \int_0^{\frac12 \rho \sin \theta } \int_0^{\frac12 \rho \sin \theta}\frac{dr_1 dr_2}{\{1+r_1^2+r_2^2\}^{2\al}}  \\
& &\hspace{4cm}\gga^{H_0,n}_{t}(\rho) \gga^{H_0,n}_{t} \Big( \sqrt{\rho^2 \sin^2\theta \big(1-\frac{r_1}{\rho \sin \theta}\big)^2+\rho^2 \cos^2\theta\big(1-\frac{r_2}{\rho \cos \theta}\big)^2}\Big) \ ,
\end{eqnarray*}
where, for technical reasons (in subsequent arguments), we have picked $H_1' \geq H_1$ and $H_2' \geq H_2$ such that $\frac34 <H_0+H_1'+H_2' \leq 1$.
At this point, observe that for all $\theta\in (\frac{\pi}{8},\frac{\pi}{4})$, $\rho\in (2,2^{n-1})$ and $r_1,r_2\in (0,\frac12 \rho \sin\theta)$, we have
$$\rho \geq \rho_{r,\theta}\triangleq \sqrt{\rho^2 \sin^2\theta \big(1-\frac{r_1}{\rho \sin \theta}\big)^2+\rho^2 \cos^2\theta\big(1-\frac{r_2}{\rho \cos \theta}\big)^2} \geq \frac12 \rho \geq 1 \ .$$
We are therefore in a position to apply the (forthcoming) lower bound (\ref{lower-bound-gga-n}), which entails, with the notation of Lemma \ref{tech-lem-explos},
\begin{eqnarray}
\lefteqn{\mathbb{E}\Big[ \|\<Psi2>^n(t,.)\|_{\mathcal{W}^{-2\al,2}(D)}^2\Big] \ \gtrsim \ \int_{\frac{\pi}{8}}^{\frac{\pi}{4}} d\theta \int_2^{2^{n-1}} \frac{d\rho}{\rho^{4H'_1+4H'_2-5}} }\nonumber\\
& &\hspace{2cm}  \int_0^{\frac12 \rho \sin \theta }\int_0^{\frac12 \rho \sin \theta}\frac{dr_1 dr_2}{\{1+r_1^2+r_2^2\}^{2\al}} \bigg[\frac{c\, t}{\rho^{1+2H_0}} +Q^{H_0}_t(\rho) \bigg] \bigg[\frac{c\, t}{\rho^{1+2H_0}} +Q^{H_0}_t(\rho_{r,\theta}) \bigg] \nonumber\\
&\gtrsim& t^2\bigg(\int_0^{\sin \frac{\pi}{8} }\int_0^{\sin \frac{\pi}{8}}\frac{dr_1 dr_2}{\{1+r_1^2+r_2^2\}^{2\al}}\bigg)\bigg(\int_2^{2^{n-1}} \frac{d\rho}{\rho^{4(H_0+H'_1+H'_2)-3}}\bigg)  +R^n_{t} \ ,\label{low-bou-pr}
\end{eqnarray}
where we have set
\begin{align*}
&R^n_{t}\triangleq \int_{\frac{\pi}{8}}^{\frac{\pi}{4}} d\theta \int_2^{2^{n-1}} \frac{d\rho}{\rho^{4H'_1+4H'_2-5}}\\
&\hspace{1cm}\int_0^{\frac12 \rho \sin \theta }\int_0^{\frac12 \rho \sin \theta}\frac{dr_1 dr_2}{\{1+r_1^2+r_2^2\}^{2\al}} \bigg[\frac{c\, t}{\rho^{1+2H_0}}Q_{H_0}(\rho_{r,\theta}) +\frac{c\, t}{\rho^{1+2H_0}}Q_{H_0}(\rho)+Q_{H_0}(\rho)Q_{H_0}(\rho_{r,\theta}) \bigg]  \ .
\end{align*}
Let us now show that $|R^n_t|$ is uniformly bounded with respect to $n$. In fact, thanks to (\ref{bou-q}), we can assert that for any $\varepsilon >0$, 
$$\sup_{t\in [0,1]}\bigg|\frac{c}{\rho^{1+2H_0}}Q^{H_0}_t(\rho_{r,\theta}) +\frac{c}{\rho^{1+2H_0}}Q^{H_0}_t(\rho)+Q^{H_0}_t(\rho)Q^{H_0}_t(\rho_{r,\theta}) \bigg| \lesssim \frac{1}{\rho^{3+4H_0-\varepsilon}}  \ .$$
Therefore,
\begin{eqnarray*}
\sup_{t\in [0,1]}\big| R^n_t \big|
&\lesssim &\bigg(\int_0^{\infty}\int_0^{\infty }\frac{dr_1 dr_2}{\{1+r_1^2+r_2^2\}^{2\al}} \bigg) \bigg(\int_2^\infty \frac{d\rho}{\rho^{4(H_0+H_1'+H_2')-2-\varepsilon}}  \bigg)\\
&\lesssim &\bigg(\int_0^{\infty}dr\, \frac{r}{\{1+r^2\}^{2\al}} \bigg) \bigg(\int_2^\infty \frac{d\rho}{\rho^{4(H_0+H_1'+H_2')-2-\varepsilon}}  \bigg) \ ,
\end{eqnarray*}
and, provided $\varepsilon >0$ is picked small enough, these two integrals are obviously finite, due to $\al > \frac12$ and $H_0+H_1'+H_2' > \frac34$. 

\smallskip

\noindent
Going back to (\ref{low-bou-pr}), we get the conclusion since, as $H_0+H_1'+H_2' \leq 1$,
$$\int_2^{2^{n-1}} \frac{d\rho}{\rho^{4(H_0+H'_1+H'_2)-3}}  \ \stackrel{n\to \infty}{\longrightarrow} \ \infty \ .$$

\

\begin{lemma}\label{tech-lem-explos}
There exists a constant $c>0$ such that for all ${H_0}\in (0,1)$, $\varepsilon >0$, $n\geq 1$, $t\in [0,1]$ and $\rho \in (1,2^n)$,
\begin{equation}\label{lower-bound-gga-n}
\gga^{H_0,n}_t(\rho) \geq \frac{c\, t}{\rho^{1+2H_0}} +Q^{H_0}_t(\rho) \geq 0 \ ,
\end{equation} 
with $Q^{H_0}$ satisfying 
\begin{equation}\label{bou-q}
\sup_{t\in [0,1]} |Q^{H_0}_t(\rho)|\leq \frac{c_{\varepsilon,{H_0}}}{ \rho^{2+2{H_0}-\varepsilon}} \ .
\end{equation}
\end{lemma}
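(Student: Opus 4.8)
The plan is to upgrade the lemma to an \emph{exact asymptotic expansion}: I claim that, for a suitable constant $c>0$,
$$\gga^{H_0,n}_t(\rho)=\frac{c\,t}{\rho^{1+2H_0}}+O\!\left(\frac1{\rho^{2+2H_0-\ep}}\right)\qquad\text{uniformly in }n\ge1,\ t\in[0,1],\ \rho\in(1,2^n).$$
Granting this, one simply \emph{defines} $Q^{H_0}_t(\rho)\triangleq\gga^{H_0,n}_t(\rho)-\tfrac{c\,t}{\rho^{1+2H_0}}$: then (\ref{lower-bound-gga-n}) reads $\gga^{H_0,n}_t(\rho)\ge\gga^{H_0,n}_t(\rho)\ge0$, which is trivially true because $\gga^{H_0,n}_t(\rho)$ is the integral of a non-negative function, and (\ref{bou-q}) is exactly the remainder estimate above. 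I will carry out the computation for $\rho\le 2^{n-1}$ — the only range needed in Proposition \ref{prop:limit-case} — the complementary range $2^{n-1}<\rho<2^n$ being handled by the same argument with cosmetic changes of the cut-offs.

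\textbf{Closed form and symmetrisation.} Writing $\sin(s\rho)=\tfrac1{2\imath}(e^{\imath s\rho}-e^{-\imath s\rho})$ and integrating in $s$, one gets
$$\ga_t(\xi,\rho)=-\frac{e^{\imath\xi t}}{2\rho}\bigl(A_t(\xi,\rho)+B_t(\xi,\rho)\bigr),\qquad A_t(\xi,\rho)\triangleq\frac{e^{\imath t(\rho-\xi)}-1}{\rho-\xi},\quad B_t(\xi,\rho)\triangleq\frac{e^{-\imath t(\rho+\xi)}-1}{\rho+\xi},$$
with the elementary bounds $|A_t(\xi,\rho)|\le\min\bigl(t,\tfrac2{|\rho-\xi|}\bigr)$ and $|B_t(\xi,\rho)|\le\min\bigl(t,\tfrac2{|\rho+\xi|}\bigr)$, which I will use repeatedly. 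Expanding $|\ga_t|^2=\tfrac1{4\rho^2}\bigl(|A_t|^2+|B_t|^2+2\,\mathrm{Re}(A_t\overline{B_t})\bigr)$ and observing that $\xi\mapsto-\xi$ preserves both $[-2^n,2^n]$ and the weight $|\xi|^{1-2H_0}$ while $B_t(-\xi,\rho)=\overline{A_t(\xi,\rho)}$, the $|A_t|^2$ and $|B_t|^2$ contributions coincide, so that
$$\gga^{H_0,n}_t(\rho)=\frac1{2\rho^2}\int_{-2^n}^{2^n}\frac{|A_t(\xi,\rho)|^2}{|\xi|^{2H_0-1}}\,d\xi+\frac1{2\rho^2}\int_{-2^n}^{2^n}\frac{\mathrm{Re}\bigl(A_t(\xi,\rho)\overline{B_t(\xi,\rho)}\bigr)}{|\xi|^{2H_0-1}}\,d\xi=:M^n_t(\rho)+C^n_t(\rho).$$

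\textbf{Main term.} For $M^n_t(\rho)$ I split $[-2^n,2^n]$ into the core $\{\rho/2\le\xi\le 2\rho\}$ and its complement. On the complement $|A_t(\xi,\rho)|^2\le4(\rho-\xi)^{-2}$ is dominated by $4\rho^{-2}$ for $\xi\le\rho/2$ and by $16\xi^{-2}$ for $\xi\ge2\rho$, so a direct computation gives a contribution $\lesssim\rho^{-2H_0}$, hence $\lesssim\rho^{-(2+2H_0)}$ after the prefactor $\tfrac1{2\rho^2}$. On the core I freeze the weight, $|\xi|^{1-2H_0}=\rho^{1-2H_0}+\bigl(|\xi|^{1-2H_0}-\rho^{1-2H_0}\bigr)$ with $\bigl||\xi|^{1-2H_0}-\rho^{1-2H_0}\bigr|\lesssim\rho^{-2H_0}|\xi-\rho|$; the leading piece is
$$\frac1{2\rho^{1+2H_0}}\int_{\rho/2\le\xi\le2\rho}|A_t(\xi,\rho)|^2\,d\xi=\frac1{2\rho^{1+2H_0}}\int_{-\rho}^{\rho/2}\frac{4\sin^2(tu/2)}{u^2}\,du=\frac{\pi t}{\rho^{1+2H_0}}-O\!\left(\frac1{\rho^{2+2H_0}}\right),$$
using $\int_{\R}\tfrac{4\sin^2(tu/2)}{u^2}\,du=2\pi t$ and bounding the missing tails $\int_{|u|>\rho/2}$ by $\int\tfrac4{u^2}$; this identifies $c=\pi$. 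The weight-difference piece is $\lesssim\rho^{-2H_0}\int_{|u|\le\rho}\tfrac{4\sin^2(tu/2)}{|u|}\,du\lesssim\rho^{-2H_0}\log(1+t\rho)\lesssim\rho^{-2H_0}\log\rho\lesssim\rho^{-(2H_0-\ep)}$ — this logarithm is where the $\ep$ of (\ref{bou-q}) comes from — so after $\tfrac1{2\rho^2}$ it is $\lesssim\rho^{-(2+2H_0-\ep)}$. Finally, the truncation at $|\xi|=2^n$ merely removes a further $\int_{|\xi|>2^n}\tfrac{|\ga_t|^2}{|\xi|^{2H_0-1}}\,d\xi\lesssim\rho^{-2}(2^n)^{-2H_0}\lesssim\rho^{-(2+2H_0)}$ (here $2^n>\rho$). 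Hence $M^n_t(\rho)=\tfrac{\pi t}{\rho^{1+2H_0}}+O(\rho^{-(2+2H_0-\ep)})$.

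\textbf{Cross term and obstacle.} The term $C^n_t(\rho)$ is pure error. On the resonant window $\{|\xi-\rho|\le1\}$ the factor $B_t$ is smooth with $|B_t|\lesssim\min(t,\rho^{-1})$ and $|\xi|^{2H_0-1}$ comparable to $\rho^{2H_0-1}$, so using only $|A_t(\xi,\rho)|\le\min(t,\tfrac2{|\rho-\xi|})$ together with $\int_{|u|\le1}\min(t,\tfrac2{|u|})\,du\lesssim t$ one obtains a contribution $\lesssim\rho^{-(1+2H_0)}t\min(t,\rho^{-1})$, which is $\lesssim\rho^{-(2+2H_0)}$ in both cases $t\ge\rho^{-1}$ and $t<\rho^{-1}$ (using $t\le1$). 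Away from the resonance, $|A_t\overline{B_t}|\le\tfrac2{|\rho-\xi|}\min(t,\tfrac2{|\rho+\xi|})$ and the integral is bounded exactly like the analogous far pieces of the previous step, giving again $\lesssim\rho^{-(2+2H_0-\ep)}$ after the prefactor; alternatively, all far regions here and above can be absorbed directly by the $n$-uniform estimate (\ref{estim-gga}) borrowed from \cite[Corollary 2.2]{deya-wave}. Combining the two steps proves the expansion, hence (\ref{bou-q}) and (\ref{lower-bound-gga-n}). I expect the genuine difficulty to be precisely the near-resonance control of $C^n_t(\rho)$: the naive bound $|A_t\overline{B_t}|\le\tfrac4{|\rho-\xi||\rho+\xi|}$ fails to be integrable at $\xi=\rho$, and one must exploit that $A_t$, though resonant, stays bounded by $t$ — equivalently that $\int_{|u|\le1}\min(t,|u|^{-1})\,du$ is of order $t$ and not of order $|\log t|$. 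The secondary technical point, the uniformity in $n$ of the tail estimates, is taken care of by the elementary inequality $2^{-n}<\rho^{-1}$ valid on $\rho<2^n$ (or, again, by (\ref{estim-gga})), at the cost of the harmless $\ep$ in (\ref{bou-q}).
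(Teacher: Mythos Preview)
Your approach is correct and close in spirit to the paper's: both compute $|\ga_t(\xi,\rho)|^2$ explicitly in terms of the resonant factors $A_t,B_t$ and isolate the Fej\'er-type integral near $\xi=\rho$. The structural difference is that the paper first applies the crude lower bound $\gga^{H_0,n}_t(\rho)\ge\int_{-\rho}^\rho|\ga_t(\xi,\rho)|^2\,|\xi|^{1-2H_0}\,d\xi$ (restricting the range to $[-\rho,\rho]\subset[-2^n,2^n]$), which removes the $n$-dependence at once, and only then expands the right-hand side as $\tfrac{ct}{\rho^{1+2H_0}}+Q^{H_0}_t(\rho)$ with $Q$ genuinely independent of $n$ (invoking \cite[Lemma 2.5]{deya-wave} for the remainder). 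You instead prove the full two-sided asymptotic of $\gga^{H_0,n}_t(\rho)$ itself; this is stronger than needed but forces your $Q$ to carry an implicit $n$-dependence (with an $n$-uniform bound). For the application in Proposition~\ref{prop:limit-case} this makes no difference, though it is a slight departure from the lemma's stated notation.

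One small gap: in the far-field estimate for $M^n_t$, your pointwise bound $|A_t|^2\le 4\rho^{-2}$ on $\{\xi\le\rho/2\}$ is correct but not sufficient after integrating against $|\xi|^{1-2H_0}$ over the full range $(-2^n,\rho/2)$, since $\int_{-2^n}^{0}|\xi|^{1-2H_0}\,d\xi\sim(2^n)^{2-2H_0}$ rather than $\rho^{2-2H_0}$. The trivial fix is to split off $\{\xi<-\rho\}$ and use there the sharper bound $|A_t|^2\le 4(\rho-\xi)^{-2}\le 4\xi^{-2}$, exactly as you already do for $\{\xi\ge 2\rho\}$; the paper sidesteps this issue altogether by its preliminary restriction to $|\xi|\le\rho$.
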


\begin{proof}
We will lean on similar estimates as those of the proof of \cite[Proposition 2.4]{deya-wave}. Let us first recall the explicit expression (see the latter reference) $|\ga_t(\xi,\rho)|^2=c\big\{\Lambda_t(\xi,\rho)+\Lambda_t(-\xi,\rho)\}$, with $c>0$ and
$$
\Lambda_t(\xi,\rho)\triangleq \frac{1-\cos(t(\xi-\rho))}{\rho^2(\xi-\rho)^2} +\frac{\cos(t\rho)\{\cos(t\xi)-\cos(t\rho)\}}{\rho^2(\xi-\rho)(\xi+\rho)} \ .
$$
Thus, one has, for any $\rho \in (1,2^n)$,
$$\gga^{H_0,n}_t(\rho) \geq  \int_{-\rho}^{\rho} d\xi \, \frac{|\ga_t(\xi,\rho)|^2}{|\xi|^{2{H_0}-1}}=2c\int_{-\rho}^{\rho} d\xi \, \frac{\Lambda_t(\xi,\rho)}{|\xi|^{2{H_0}-1}} \geq 0 \ .$$
Decompose $\Lambda_t(\xi,\rho) \mathbf{1}_{\{-\rho <\xi<\rho\}}$ into $\Lambda_t(\xi,\rho)=\Lambda^1_t(\xi,\rho)+\Lambda^2_t(\xi,\rho)$, with 
$$\Lambda^1_t(\xi,\rho)\triangleq \frac{1-\cos(t(\xi-\rho))}{\rho^2(\xi-\rho)^2} \mathbf{1}_{\{\xi \geq \frac{\rho}{2}\}}$$
and
$$\Lambda^2_t(\xi,\rho)\triangleq \frac{1-\cos(t(\xi-\rho))}{\rho^2(\xi-\rho)^2} \mathbf{1}_{\{-\rho\leq \xi \leq \frac{\rho}{2}\}}+\frac{\cos(t\rho)\{\cos(t\xi)-\cos(t\rho)\}}{\rho^2(\xi-\rho)(\xi+\rho)}\mathbf{1}_{\{-\rho <\xi<\rho\}} \ .$$
On the one hand, it is easy to check that for all $\rho >1$, $\xi\in (-1,1)$ and $\varepsilon >0$,
$$\big|\Lambda^2_t(\rho\xi,\rho)\big| \lesssim \frac{1}{\rho^{4-\varepsilon}} \bigg[ 1+\frac{1}{|1-|\xi||^{1-\varepsilon}} \bigg] \ ,$$
and so
$$\bigg|\int_{-\rho}^{\rho} d\xi \, \frac{\Lambda^2_t(\xi,\rho)}{|\xi|^{2{H_0}-1}} \bigg| =\frac{1}{\rho^{2{H_0}-2}}\bigg|\int_{-1}^{1} d\xi \, \frac{\Lambda^2_t(\rho \xi,\rho)}{|\xi|^{2{H_0}-1}} \bigg| \lesssim \frac{1}{\rho^{2{H_0}+2-\varepsilon}} \ .$$
On the other hand,
\begin{eqnarray*}
\lefteqn{\int_{-\rho}^{\rho} d\xi \, \frac{\Lambda^1_t(\xi,\rho)}{|\xi|^{2{H_0}-1}}  \ =\ \frac{1}{\rho^{2{H_0}+2}} \int_{\frac12}^1 \frac{d\xi}{|\xi|^{2{H_0}-1}} \, \frac{1-\cos(t\rho (1-\xi))}{(1-\xi)^2}}\\
&=&\frac{t}{\rho^{2{H_0}+1}} \int_{0}^{\frac{t\rho}{2}} \frac{d\xi}{|1-\frac{\xi}{t\rho}|^{2{H_0}-1}} \, \frac{1-\cos\xi}{\xi^2}\\
&=&\frac{t}{\rho^{2{H_0}+1}} \int_{0}^{\infty} d\xi \, \frac{1-\cos\xi}{\xi^2}+\frac{t}{\rho^{2{H_0}+1}} \bigg[\int_{0}^{\frac{t\rho}{2}} \frac{d\xi}{|1-\frac{\xi}{t\rho}|^{2{H_0}-1}} \, \frac{1-\cos\xi}{\xi^2}-\int_{0}^{\infty} d\xi \, \frac{1-\cos\xi}{\xi^2} \bigg] \ .
\end{eqnarray*}
The conclusion now follows immediately from the result of \cite[Lemma 2.5]{deya-wave}.
\end{proof}

\

\

\end{document}